\documentclass[11pt,letterpaper]{amsart}

\usepackage{amsmath,amssymb,amsfonts,amsthm,mathtools}
\usepackage[margin=1.15in]{geometry}
\usepackage[colorlinks=true,citecolor=blue,linkcolor=blue,urlcolor=blue]{hyperref}
\usepackage{enumitem}

\numberwithin{equation}{section}
\allowdisplaybreaks

\newtheorem{theorem}{Theorem}[section]
\newtheorem{lemma}[theorem]{Lemma}
\newtheorem{corollary}[theorem]{Corollary}
\newtheorem{remark}[theorem]{Remark}
\theoremstyle{definition}

\newtheorem*{remark*}{Remark}

\newcommand{\N}{\mathbb N}
\newcommand{\Z}{\mathbb Z}
\newcommand{\R}{\mathbb R}
\newcommand{\F}{\mathbb F}
\newcommand{\e}{\mathrm e}
\newcommand{\Kl}{\operatorname{Kl}}
\newcommand{\disc}{\operatorname{Disc}}
\newcommand{\Res}{\operatorname{Res}}
\newcommand{\lc}{\operatorname{lc}}
\newcommand{\rad}{\operatorname{rad}}
\newcommand{\Om}{\Omega}

\newcommand{\bnu}{\boldsymbol\nu}
\newcommand{\sn}{\mathfrak s}

\title[Mixed moments of arithmetic functions and Kloosterman sums]
{Mixed moments of arithmetic functions and Kloosterman sums}

\author{Yujiao Jiang}

\address{Yujiao Jiang\\
    School of Mathematics and Statistics
    \\
    Shandong University
    \\
    Weihai
    \\
    Shandong 264209
    \\
    China}
\email{yujiaoj@sdu.edu.cn}

\author{Yuk-Kam Lau}

\address{Yuk-Kam Lau\\
    Department of Mathematics
    \\ 
    The University of Hong Kong
    \\ 
    Pokfulam Road
    \\ 
    Hong Kong\\
    China}
\email{yklau@maths.hku.hk}
\date{\today}

\begin{document}
\begin{abstract}
We extend the Nair--Tenenbaum estimates for short sums of nonnegative arithmetic functions at polynomial values to mixed moments involving Kloosterman sums. Combining these estimates with lower bounds obtained by refining the method of Fouvry and Michel, we prove that, for every fixed
nonzero integer \(a\) and fixed \(z,\nu>0\),
\[
\sum_{n\leq x}z^{\omega(n)}
|\operatorname{Kl}(a;n)|^\nu
\asymp_{a,z,\nu}
x(\log x)^{z\mathfrak{s}(\nu)-1},
\]
where \(\omega(n)\) counts distinct prime divisors and \(\mathfrak{s}(\nu)\) is the \(\nu\)-th absolute moment of the Sato--Tate measure. We also obtain upper bounds for mixed absolute moments of Hecke eigenvalues and Kloosterman sums, determine the frequency of large Kloosterman values on a logarithmic scale, and establish the order of magnitude predicted by Li and Sarnak for the diagonal second moment of classical Kloosterman sums.
We obtain matching bounds for the smoothed spectral counting variance on the modular surface in a logarithmic range and the lower bound \(\int_T^{2T}S(t)^2\,dt\gg T^2/\log T\), where \(S(t)\) is the remainder in Weyl's law.
\end{abstract}

\maketitle

\section{Introduction}\label{section1}

For integers \(a,b\) and \(m\geq1\), define
\begin{equation}\label{eq:S-def}
 S(a,b;m)
 :=\sideset{}{^*}\sum_{u\bmod m}
 \e\left(\frac{au+b\bar u}{m}\right),
\end{equation}
where $\sideset{}{^*}\sum$ denotes summation over reduced residue classes, $\mathrm{e}(z)=\exp(2\pi i z)$, and $\bar u$ is the multiplicative inverse of $u$ modulo $m$. The substitution \(u\mapsto \bar u\) gives \(S(a,b;m)=S(b,a;m)\).  We use the normalization
\begin{equation*}
 \Kl(a;m):=m^{-1/2}S(a,1;m).
\end{equation*}
The Chinese remainder theorem gives the
twisted multiplicativity formula
\begin{equation}\label{eq:Kl-mult}
 \Kl(a;mn)=\Kl(a\bar n^2;m)\Kl(a\bar m^2;n)
\end{equation}
for $(m,n)=1$, see \cite[(1.58)--(1.59)]{IK-2004}. Write \(\omega(n)\) for the
number of distinct prime factors of \(n\), and \(\Omega(n)\) for the
number counted with multiplicity. The Weil--Estermann bound gives
\begin{equation}\label{eq:Kl-Weil}
 |\Kl(a;m)|\leq \kappa(v_2(m))\cdot 2^{\omega(m)}\le 2^{3/2}2^{\omega(m)},
\end{equation}
where $v_2(m)$ is the $2$-adic valuation of $m$, and $\kappa(x) =1,\sqrt{2}, 2,2^{3/2}$ according as $x$ lies in $[0,4]$, $(4,5]$, $(5,6]$ or $(6,\infty)$; see \cite[(1.1)]{ASS-2024}.

Let \(k\geq1\), and let \(Q_1,\ldots,Q_k\in\Z[X]\) be pairwise coprime
irreducible polynomials, and put
\[
 Q=Q_1\cdots Q_k,\qquad g=\deg Q,\qquad D=\disc(Q)
\]
where $\deg$ and $\disc$ denote the degree and discriminant respectively.
Then \(Q\) is squarefree over \(\mathbb Q\), so \(D\ne0\).
For \(a\in\Z\setminus\{0\}\),
\(\bnu=(\nu_1,\ldots,\nu_k)\in\R_{\geq0}^k\), and a nonnegative
arithmetic function \(F\) satisfying the growth conditions below, we
consider
\[
 \sum_{\substack{x<n\leq x+y\\Q(n)\ne0}}
 F(|Q_1(n)|,\ldots,|Q_k(n)|)
 \prod_{j=1}^k|\Kl(a;|Q_j(n)|)|^{\nu_j}.
\]
Factors with zero exponent are omitted.

For fixed polynomials \(Q_1,\ldots,Q_k\) satisfying our hypotheses and \(F\equiv1\), this case is of particular interest and has appeared in the work of several authors. For instance, when \(\nu_1=\cdots=\nu_k=1\), our result gives the upper bound \(y(\log x)^{-k(1-8/(3\pi))}\), up to a constant depending on the fixed parameters. Furthermore, when \(k=1\) and \(Q_1(x)=x\), our result generalizes and improves the upper bound result in Fouvry and Michel \cite{FM-2003}. The new upper bound in this case is tight. 
Thanks to the work of Fouvry and Michel, we can complement it with a nice lower bound by modifying their method in \cite{FM-2003}. In summary, we obtain that for every
fixed \(a\ne0\), \(z>0\), and \(\nu>0\), 
\[
 \sum_{n\leq x}z^{\omega(n)}|\Kl(a;n)|^\nu
 \asymp_{a,z,\nu}x(\log x)^{z\sn(\nu)-1},
\]
where \(\sn(\nu)\) is the Sato--Tate moment defined in \eqref{eq:sato-moment}.
The lower bound holds even on squarefree moduli coprime to \(2a\),
and is uniform for \(0<|a|\leq x^A\), with an explicit factor at
primes dividing \(2a\); see Theorem~\ref{thm:lower}. 
More applications will be given in Section~\ref{sec:applications}.


We now state the arithmetic hypotheses and motivation.
For \(k\geq1\), \(A,B\geq1\),
and \(\varepsilon>0\), let \(\mathcal M_k(A,B,\varepsilon)\) be the
class of functions \(F:\N^k\to\R_{\geq0}\) with
\(F(1,\ldots,1)=1\) such that
\begin{equation}\label{eq:Mclass}
 F(a_1b_1,\ldots,a_kb_k)
 \leq
 \min\left(A^{\Omega(a_1\cdots a_k)},
 B(a_1\cdots a_k)^\varepsilon\right)
 F(b_1,\ldots,b_k)
\end{equation}
whenever \((a_1\cdots a_k,b_1\cdots b_k)=1\).
For \(Q(X)=\sum_{j=0}^g c_jX^j\in\Z[X]\), write
\begin{equation}\label{eq:Q-rho}
 \|Q\|:=\sum_{j=0}^g|c_j|,
 \qquad
 \rho_Q(m):=\#\{r\in\Z/m\Z:Q(r)\equiv0\,(\text{mod}\,m)\}.
\end{equation}

The class $\mathcal{M}_k$ has been studied previously. In 1980, Shiu \cite{Shiu-1980} proved a Brun--Titchmarsh theorem for
multiplicative functions in short arithmetic progressions.
Nair \cite{Nair-1992} extended the method to polynomial values, and
Nair and Tenenbaum \cite{NT-1998} developed a multivariable version, thereby broadening its range of applications. Henriot \cite{Henriot-2012,Henriot-2014-erratum} refined the results of \cite{NT-1998} by  making the dependence on
the discriminant explicit, using Stewart's estimate
\cite{Stewart-1991} for polynomial congruences. Let us recall the following special case, in the formulation of
\cite[Theorem~1]{Henriot-2012}.

\begin{theorem}[Henriot-Nair-Tenenbaum]\label{thm-nt}
Let \(k\geq1\), and let \(Q_1,\ldots,Q_k\in\Z[X]\) be pairwise coprime
irreducible polynomials. Put \(Q=Q_1\cdots Q_k\), \(g=\deg Q\),
\(D=\disc(Q)\), and \(\rho=\rho_Q\), and suppose that \(Q\) has no
fixed prime divisor. Let \(0<\alpha,\delta<1\), \(A,B\geq1\), and
\(0<\varepsilon\leq\alpha\delta/(12g^2)\). If
\(F\in\mathcal M_k(A,B,\varepsilon)\), then, uniformly for
\(x\geq c_0\|Q\|^\delta\) and \(x^\alpha<y\leq x\),
\begin{equation*}\label{eq-nt}
\begin{split}
 &\sum_{\substack{x<n\leq x+y\\Q(n)\ne0}}
 F(|Q_1(n)|,\ldots,|Q_k(n)|)\\
 &\quad\ll y\prod_{p\leq x}\left(1-\frac{\rho(p)}p\right)
 \sum_{n_1\cdots n_k\leq x}
 F(n_1,\ldots,n_k)
 \frac{\rho_{Q_1}(n_1)\cdots\rho_{Q_k}(n_k)}{n_1\cdots n_k}.
\end{split}
\end{equation*}
The constant \(c_0\) depends only on \(g,\alpha,\delta,A,B\), whereas
the implied constant may additionally depend on \(D\).
\end{theorem}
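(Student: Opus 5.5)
This is Henriot's refinement of Nair--Tenenbaum, quoted in the paper as a known result. The plan is to reproduce the Shiu--Nair--Tenenbaum argument and track the dependence on \(\|Q\|\) and \(D\).

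\textbf{Step 1: factor each value.} Fix \(n\in(x,x+y]\) and set \(z=y^{c}\) for a small \(c>0\) depending on \(\alpha,\delta,g\). Write \(Q_j(n)=a_jb_j\), where \(a_j\) is built from the small primes of \(Q_j(n)\). Concretely, order the prime powers of \(\prod_j Q_j(n)\) increasingly. Let \(a=a_1\cdots a_k\) be the largest initial product that is \(\le z\). Let \(b_j\) be the remaining part, all of whose prime factors are at least \(p^+\), the next prime.

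Since \(|Q_j(n)|\ll\|Q\|x^{g}\) and \(x\ge c_0\|Q\|^{\delta}\), the total size is \(\le x^{g+1/\delta}\). Therefore \(\Omega(b_1\cdots b_k)\) is bounded in terms of \(\log x/\log p^+\).

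By \eqref{eq:Mclass}, \(F(Q(n))\le F(a_1,\dots,a_k)\cdot\min(A^{\Omega(b)},B\,b^{\varepsilon})\). The condition \(\varepsilon\le\alpha\delta/(12g^2)\) is exactly what keeps \(b^\varepsilon\le y^{1/2}\)-type losses harmless in the degenerate classes below.

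\textbf{Step 2: classify \(n\).} Split the \(n\) into four classes, as in Shiu and Nair--Tenenbaum:
\begin{enumerate}[label=(\roman*)]
\item \(p^+>z^{1/2}\);
\item \(p^+\le z^{1/2}\) and \(a\le z^{1/2}\), so a large prime power divides \(Q(n)\);
\item \(p^+\le\log x\log\log x\);
\item \(\log x\log\log x<p^+\le z^{1/2}\) and \(a>z^{1/2}\).
\end{enumerate}

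For class (i), the weight on \(b\) is \(O(1)\). Summing over \(a\) and the residues \(n\bmod a\), a sieve upper bound (Selberg or Brun) counts the \(n\) with \(Q(n)/a\) free of primes below \(z^{1/2}\). This gives
\[
y\sum_{a}\frac{F(a)\rho_{Q_1}(a_1)\cdots\rho_{Q_k}(a_k)}{a}\prod_{p\le z}\Bigl(1-\frac{\rho(p)}{p}\Bigr).
\]
The interplay of the \(\rho_{Q_j}(a_j)\) uses the coprimality of the \(Q_j\); primes dividing \(D\) are absorbed into the constant. Then \(\prod_{p\le z}\asymp\prod_{p\le x}\) up to constants depending on \(c\) and \(g\).

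Classes (ii) and (iii) are shown to be negligible. The count uses \(\rho(p^\nu)\ll_{g,D}1\) and the hypothesis of no fixed prime divisor.

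For class (iv), sum over \(p^+\) in dyadic ranges. There \(A^{\Omega(b)}\le A^{O(\log x/\log p^+)}\) is compensated by the sieve saving \(\exp(-c\,u\log u)\) with \(u=\log z/\log p^+\), from the extra condition that \(a\) is built from primes \(<p^+\) yet exceeds \(z^{1/2}\). A Rankin-type argument handles this.

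\textbf{Step 3: uniformity, the main obstacle.} The delicate point is making the sieve and congruence counts uniform in \(\|Q\|\), with only \(D\) in the implied constant. This needs a uniform bound \(\rho_Q(p^\nu)\le C(g)\,p^{v_p(D)}\)-type control, which is Stewart's estimate \cite{Stewart-1991}.

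It also needs the sieve level to satisfy \(z^{\text{const}}\le y\) while \(\|Q\|\le x^{1/\delta}\). This is where \(c_0\) depending on \(g,\alpha,\delta,A,B\) enters. I would follow \cite{Henriot-2012} directly for this bookkeeping rather than re-derive it.
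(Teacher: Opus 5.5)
\paragraph{Verdict.} Your route is the Shiu--Nair--Tenenbaum decomposition as refined by Henriot, which is what the paper relies on: it quotes this theorem from \cite{Henriot-2012} without proof, and Section~7 reruns the same decomposition for its weighted version. As written, though, two steps do not close.

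\paragraph{Class (iv).} Suppose you group \(p^+\) into dyadic ranges \(P<p^+\le 2P\) and use only that \(Q(n)/a\) has no prime factor below \(P\). Then, for each range with \(u=\log z/\log P\asymp1\), your bound is already the size of the full right-hand side, because there is no decay in \(u\) there. You can see this with \(F\equiv1\), \(Q(X)=X\). There are \(\asymp\log x\) such ranges, so you lose a factor \(\log x\). You need one of two fixes:
\begin{itemize}
\item use Shiu's ranges \(z^{1/(r+1)}<p^+\le z^{1/r}\), with one sieve count per range and decay in \(r\); or
\item keep the divisibility by the pivot prime inside the sieve count.
\end{itemize}
The paper does the latter. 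Lemma~\ref{lem:weighted-sieve-ell} retains \(\ell^r\mid Q(n)\) for \(\ell=P^+(d_n)\), which supplies \(\rho(\ell^r)/\ell^r\ll 1/\ell\) and makes \(\sum_{\ell\le x}e^{-u}/\ell\ll1\). With that factor, a fixed large Rankin parameter \(\sigma=\chi/\log\ell\) already beats \(A^{O(u)}u^g\), so no \(e^{-cu\log u}\) saving is needed.

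\paragraph{Missing comparison lemmas.} The right-hand side is the truncated sum \(\sum_{n_1\cdots n_k\le x}W(\mathbf n)\), where \(W(\mathbf n)=F(\mathbf n)\rho_{Q_1}(n_1)\cdots\rho_{Q_k}(n_k)/(n_1\cdots n_k)\). This is not an Euler product, and \(F\) is only submultiplicative, so ``a Rankin-type argument'' does not finish by itself. Rankin's trick produces \(\sum_{P^+(a_1\cdots a_k)<p^+}W(\mathbf a)(a_1\cdots a_k)^{\sigma}\) over all smooth tuples with no size cutoff. Getting back to \(\sum_{a_1\cdots a_k\le x}W(\mathbf a)\) needs the Nair--Tenenbaum/Henriot comparison lemmas, which are the paper's Lemma~\ref{lem:stability}(ii),(iii).

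The same issue already arises in class (i). The sieve product runs over \(p\nmid a\), so the true main term carries the extra factor \(\prod_{p\mid a}(1-\rho(p)/p)^{-1}\). Reaching your displayed bound requires Lemma~\ref{lem:stability}(i).

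\paragraph{Smaller corrections.}
\begin{itemize}
\item Step 3 misplaces the difficulty, since the implied constant may depend on \(D\). Hensel's lemma for \(p\nmid D\), plus a bound for \(\rho(p^\nu)\) depending only on \(g\) and \(v_p(D)\), is enough. Uniformity in \(\|Q\|\) enters only through \(|Q(n)|\le x^{g+1/\delta}\) and \(\rho(p)\le g\) in the sieve. Stewart's bound is needed only for uniformity in \(D\), which is how the paper uses it.
\item Your class (ii) saves only about \(z^{-1/4}\), and this must beat the trivial loss \(x^{\varepsilon(g+1/\delta)}\). So for \(\varepsilon\) near \(\alpha\delta/(12g^2)\) you need roughly \(c>(\delta g+1)/(3g^2)\), not a small \(c\).
\end{itemize}
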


It should be remarked that Theorem~\ref{thm-nt} {\it cannot} be applied to our problem, because Kloosterman sums are twisted multiplicative but not multiplicative, see \eqref{eq:Kl-mult}. The dependence on the complementary modulus precludes the Kloosterman weights from satisfying the identity \eqref{eq:Mclass}. 


Section 2 lists notation and a crucial condition. Sections 3 and 4 state
the main results and applications. Sections 5 and 6 establish the local
estimates and sieve lemmas. Section 7 proves Theorem~\ref{thm:main}, and
Section~\ref{sec:upper-corollaries} proves its corollaries.
Section~\ref{sec:lower-proof} proves Theorem~\ref{thm:lower}.
The proofs of the applications are given in
Section~\ref{sec:application-proofs}.

\section{Notation and weighted congruence densities}

The parameters \(k,g,A,B,\alpha,\delta,\varepsilon\), and
\(\boldsymbol\nu\) are fixed, subject to the hypotheses of the results
below. The nonzero integer \(a\) and the polynomial tuple
\(\mathbf Q=(Q_1,\ldots,Q_k)\) may vary. Unless stated otherwise, the
implied constants are independent of \(a\) and the polynomial
coefficients, and hence have no further dependence on \(D\).

For a nonzero polynomial \(R\in\Z[X]\), write \(\lc(R)\) for its
leading coefficient, and define \(\|R\|\) and \(\rho_R(m)\)  as in \eqref{eq:Q-rho}. We write \(d\parallel n\) if \(d\mid n\) and
\((d,n/d)=1\), and put \(\rad(d)=\prod_{p\mid d}p\).
Let \(P^+(n)\) and \(P^-(n)\) denote the largest and smallest prime
factors of \(n\), with \(P^+(1)=1\) and \(P^-(1)=+\infty\).
We use the usual valuation \(v_p(n)\), with \(v_p(0)=+\infty\).
Products of vectors are taken componentwise, i.e. Hadamard products, and
\(\mathbf1_k=(1,\ldots,1)\). The letter \(p, p_1,p_2,\ldots\) always denote  primes.
For \(\boldsymbol\ell\in\Z_{\geq0}^k\), write
\[
 |\boldsymbol\ell|=\ell_1+\cdots+\ell_k,
 \qquad p^{\boldsymbol\ell}=(p^{\ell_1},\ldots,p^{\ell_k}).
\]
We also write \(|\bnu|=\nu_1+\cdots+\nu_k\).

For \(\mathbf d=(d_1,\ldots,d_k)\in\N^k\) and \(r\in\Z\), let
\(\mathcal P_{\mathbf Q}(\mathbf d,r)\) denote the conditions
\[
 v_p(Q_j(r))=v_p(d_j)
 \qquad(p\mid d_1\cdots d_k,\ 1\leq j\leq k).
\]
Equivalently, the conditions are 
\(d_j\parallel Q_j(r)\) and \(p\nmid Q_j(r)\) at  \(p\mid d_1\cdots d_k\) with \(p\nmid d_j\),  for all $1\le j\le k$. These are the two
conditions in \cite[(0.1)]{Henriot-2014-erratum}.

For fixed \(\mathbf d\), we set
\[
 L(\mathbf d):=[d_1^2,\ldots,d_k^2].
\]
We note a key observation.
\begin{align}\label{key}
\mbox{The condition \(\mathcal P_{\mathbf Q}(\mathbf d,r)\) depends only on
\(r \bmod L(\mathbf d)\).}  \tag{K-O}   
\end{align}
Indeed, if \(r'\equiv r \pmod{L(\mathbf d)}\), then
\[
Q_j(r')\equiv Q_j(r)\pmod{d_j^2},
\]
and hence  \(\mathcal P_{\mathbf Q}(\mathbf d,r')\) holds if and only if  \(\mathcal P_{\mathbf Q}(\mathbf d,r)\)  holds. 
Whenever \(\mathcal P_{\mathbf Q}(\mathbf d,r)\) holds and \(d_j>1\), the quotient
\(Q_j(r)/d_j\) is a unit modulo \(d_j\), and
\begin{align}\label{p-property}
\frac{Q_j(r')}{d_j}\equiv \frac{Q_j(r)}{d_j}\pmod{d_j}.
\end{align}
Hence its inverse modulo \(d_j\) is independent of the chosen representative
\(r \bmod L(\mathbf d)\).  

We can thus partition the set of all \(r\in\mathbb Z\) satisfying
\(\mathcal P_{\mathbf Q}(\mathbf d,r)\) into admissible residue classes modulo
\(L(\mathbf d)\), which   justifies the definition
\begin{align}\label{def-H}
 H_{a,\boldsymbol\nu;\mathbf Q}(\mathbf d)
 :=\frac1{L(\mathbf d)}
 \sum_{\substack{r\bmod L(\mathbf d)\\
                 \mathcal P_{\mathbf Q}(\mathbf d,r)}}
 \prod_{j=1}^k
 \left|\Kl\left(a\,\overline{Q_j(r)/d_j}^{2};d_j\right)\right|^{\nu_j}.
\end{align}

Here and throughout, when \(d_j=1\), the corresponding Kloosterman factor is understood to be \(1\). We shall write \(H_{a,\boldsymbol\nu}\) when the polynomial tuple is clear from the context.

\section{Main results}

Theorem~\ref{thm:main} gives an upper bound in terms of the weighted
congruence density \(H_{a,\boldsymbol\nu}\). For practical applications, we separate the primes dividing the discriminant and give an explicit Euler-product bound; see Corollary~\ref{cor:discriminant}. The contribution from integers coprime to D admits an Euler-product bound, as shown in Corollary~\ref{cor:Henriot-cor2}. For fixed polynomials, these bounds yield powers of \(\log x\) determined by the local Kloosterman moments. We demonstrate such an upper-bound result in Corollary~\ref{cor:normalized}.
Theorem~\ref{thm:lower} gives a complementary lower bound when the modulus ranges over the positive integers, showing that the upper bound is sharp in this case.

For \(F\in\mathcal M_k(A,B,\varepsilon)\), define
\begin{equation}\label{eq:standard-majorant}
 G_F(\mathbf a) :=
 \sup_{\substack{\mathbf b\in\N^k\\
                  (a_1\cdots a_k,b_1\cdots b_k)=1\\
                  F(\mathbf b)>0}}
 \frac{F(a_1b_1,\ldots,a_kb_k)}{F(\mathbf b)}.
\end{equation}
For \((a_1\cdots a_k,b_1\cdots b_k)=1\), we have,  including when \(F(\mathbf b)=0\), 
\begin{align}\label{submultiplicativity}
F(\mathbf a\mathbf b)\le G_F(\mathbf a)F(\mathbf b)
\quad \mbox{ and } \quad 
G_F(\mathbf a\mathbf b)\leq G_F(\mathbf a)G_F(\mathbf b);
\end{align}
 by \eqref{eq:Mclass},
\begin{equation}\label{eq:standard-majorant-bound}
 G_F(\mathbf a)
 \leq
 \min \left(A^{\Om(a_1\cdots a_k)},
             B(a_1\cdots a_k)^\varepsilon\right).
\end{equation}
Taking \(\mathbf b=\mathbf1_k\) shows that \(G_F\geq F\).
Moreover, \(G_F\) is submultiplicative on coprime products, that is, \(G_F(\mathbf1_k) =1\) and 
\(G_F(\mathbf a\mathbf b)\leq G_F(\mathbf a)G_F(\mathbf b)\)
if \((a_1\cdots a_k,b_1\cdots b_k)=1\).

\begin{theorem}\label{thm:main}
Assume \(k\geq1\), \(a\in\Z\setminus\{0\}\), and
\(\bnu\in\mathbb R_{\geq0}^k\).  Let
\(Q_1,\ldots,Q_k\in\Z[X]\) be pairwise coprime irreducible
polynomials, put
\[
 Q=Q_1\cdots Q_k,\qquad g=\deg Q,\qquad
 D=\disc(Q),\qquad \rho=\rho_Q,
\]
and assume that \(Q\) is primitive.  Assume
\(0<\alpha,\delta<1\), \(A,B\geq1\), and let 
\begin{equation*}\label{eq:epsilon-range}
 0<\varepsilon
 \leq\frac{\alpha}{100g(g+1/\delta)}
\end{equation*}
and \(\mathcal{M}_k(A,B,\varepsilon)\) be defined as in Section~\ref{section1}.

If  \(F\in\mathcal M_k(A,B,\varepsilon)\), then, uniformly for
\begin{equation}\label{eq:xyrange}
 x\geq c_0\|Q\|^\delta,\qquad x^\alpha<y\leq x,    
\end{equation}
we have
\begin{equation}\label{eq:main-theorem}
\begin{split}
 &\sum_{\substack{x<n\leq x+y\\Q(n)\neq0}}
 F(|Q_1(n)|,\ldots,|Q_k(n)|)
 \prod_{j=1}^k|\Kl(a;|Q_j(n)|)|^{\nu_j}\\
 &\quad\ll
 y\prod_{g<p\leq x}\left(1-\frac{\rho(p)}p\right)
 \sum_{d_1\cdots d_k\leq x}
 F(d_1,\ldots,d_k)H_{a,\bnu}(d_1,\ldots,d_k).
\end{split}
\end{equation}
Here the constant \(c_0\) and the implied constant depend only on
\(\bnu,g,k,\alpha,\delta,A,B\).
\end{theorem}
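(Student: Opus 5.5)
We follow the Shiu--Nair--Tenenbaum scheme, as refined by Henriot, and change only the step where multiplicativity of the weight enters. The Kloosterman weight is handled through the twisted multiplicativity \eqref{eq:Kl-mult} together with the key observation \eqref{key}. Write \(W(n)=F(|Q_1(n)|,\ldots,|Q_k(n)|)\prod_j|\Kl(a;|Q_j(n)|)|^{\nu_j}\).

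\emph{Factorisation.} For each \(n\) we factor \(|Q_j(n)|=d_jm_j\), where \(d_j\) is the ``small-prime'' part (primes \(\le z\), with \(z=y^{c}\) for a small constant \(c\) depending on \(\alpha,\delta,g\)). We cut at the largest \(d_1\cdots d_k\le y^{c}\), as in Nair--Tenenbaum's choice of \(a_n\), and the complementary parts have all prime factors large, in the usual sense. By \eqref{eq:Kl-mult} we have \(\Kl(a;d_jm_j)=\Kl(a\overline{m_j}^2;d_j)\Kl(a\overline{d_j}^2;m_j)\). The second factor is bounded via \eqref{eq:Kl-Weil} by \(2^{3/2}2^{\omega(m_j)}\); since \(m_j\) has few prime factors when \(P^-(m_j)\) is large, this is absorbed as a factor \(\le C^{\Omega(m_j)}\). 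Together with \eqref{submultiplicativity}, this gives
\[
W(n)\le F(\mathbf d)\,G'(\mathbf m)\prod_j\bigl|\Kl\bigl(a\,\overline{Q_j(n)/d_j}^{\,2};d_j\bigr)\bigr|^{\nu_j},
\]
where \(G'(\mathbf m)=G_F(\mathbf m)C^{|\bnu|\Omega(m_1\cdots m_k)}\) is of the Nair--Tenenbaum type. The crucial point is that, with \(\mathbf d\) fixed, the condition \(\mathcal P_{\mathbf Q}(\mathbf d,n)\) and the Kloosterman factor depend only on \(n\bmod L(\mathbf d)\), by \eqref{key} and \eqref{p-property}.

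\emph{Cases.} We run the standard case split. In case (i), \(P^-(\mathbf m)>z\), so the large parts have no small primes. In the remaining cases there is a large prime-power part, or \(\mathbf d\) is abnormally smooth, or \(\mathbf d\) is unusually large; these are handled as in Nair--Tenenbaum/Henriot. The only change there is that the Kloosterman factor is bounded trivially by \(2^{O(\omega(d))}\), which is absorbed into \(A^{\Omega}\)-type weights. This absorption is possible because those cases already save a power of \(\log\) or of \(y\), through Rankin's trick and Stewart-type bounds for \(\rho\).

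\emph{Main case (i).} For fixed \(\mathbf d\) we count \(n\in(x,x+y]\) in each admissible class \(r\bmod L(\mathbf d)\) such that \(Q(n)/\prod d_j\) has no prime factor \(\le z\) outside \(d\). An upper-bound sieve (Brun or Selberg) applies, since \(L(\mathbf d)\le y^{2c}\) is small relative to \(y\). It gives \(\ll (y/L(\mathbf d))\prod_{p\le z,\,p\nmid d}(1-\rho(p)/p)\). Here we restrict to \(g<p\), because primitivity rather than the absence of a fixed prime divisor only controls \(\rho(p)<p\) for \(p>g\); small primes contribute bounded factors depending on \(g\). Summing over classes produces exactly \(H_{a,\bnu}(\mathbf d)\), and the factor \(\prod_{p\mid d}(1-\rho(p)/p)^{-1}\) is absorbed into \(F\)'s submultiplicative slack or bounded by \(A^{\Omega}\)-adjustments. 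Summing over \(\mathbf d\) with \(d_1\cdots d_k\le x\) and extending the Euler product from \(z\) to \(x\) at a constant cost yields \eqref{eq:main-theorem}.

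\emph{Main obstacle.} The hardest step will be ensuring that the factors \(\prod_{p\mid d}(1-\rho(p)/p)^{-1}\), together with the uniformity in \(\|Q\|\) and \(D\), do not spoil the bound. This requires using Stewart's bound \(\rho(p^\ell)\ll p^{\ell(1-1/g)}\) uniformly and choosing \(\varepsilon\le\alpha/(100g(g+1/\delta))\), so that the \(B(d)^\varepsilon\) losses are dominated in the large-\(\mathbf d\) cases.
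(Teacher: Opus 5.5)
\textbf{Verdict: genuine gap.} Your main case (i) matches the paper's Lemma~\ref{lem:weighted-sieve} and is fine. The treatment of the remaining cases does not work. You claim that outside case (i) the Kloosterman factor on $\mathbf d$ can be bounded trivially by $2^{O(\omega(d))}$, because those cases ``already save a power of $\log$ or of $y$''. That holds for the large prime-power case and the smooth case (the paper's $C_2$, $C_4$, $C_5$). It fails for the case where $d_1\cdots d_k>x^{\varepsilon_2}$ is large and $\ell=P^+(d_1\cdots d_k)$ is not bounded, which is the paper's $C_3$ (Nair--Tenenbaum's class IV).

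In that class the only saving is the Rankin factor $\e^{-u}$, with $u=\log x/\log\ell$, obtained from the cofactor $c_j=d_j\ell^{-r_j}$. For $\ell\in(z^{1/2},z]$ this factor is $\asymp1$, and these primes have $\sum1/\ell\asymp1$. So the class enters the main term at full strength, weighted by a sum over $\mathbf c$ with $P^+(c_1\cdots c_k)<\ell$. If the Kloosterman factor on $\mathbf c$ is bounded trivially, that sum becomes $\sum F(\mathbf c)2^{|\bnu|\omega(c_1\cdots c_k)}\rho(c_1\cdots c_k)/(c_1\cdots c_k)$. By \eqref{eq:H-comparison} this only majorizes the target sum, and it exceeds it by a power of $\log x$. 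For instance, with $k=1$, $Q=X$, $F\equiv1$ and $\nu=2$, your bound for this class is of order $y(\log x)^3$, whereas \eqref{eq:main-theorem} asserts $\ll y$.

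\textbf{What is needed} is a weighted version of the class-IV sieve.
\begin{enumerate}
\item Keep $\prod_j|\Kl(a\,\overline{Q_j(n)/c_j}^{\,2};c_j)|^{\nu_j}$ and bound trivially only the factor on $\ell^{r_j}b_j$. Its size $2^{O(|\bnu|\Omega(b_n))}$, together with $A^{\Omega(b_n)}$ from $F$, grows only like $C^{u}$ and is beaten by taking the Rankin parameter large.
\item Count $n$ in Kloosterman-weighted classes modulo $L(\mathbf c)$, subject also to $\ell^r\mid Q(n)$, sieving only by primes up to $\ell$. By \eqref{key} and the Chinese remainder theorem, the $\rho(\ell^r)$ classes modulo $\ell^r$ combine with the classes modulo $L(\mathbf c)$ to give $y\,\rho(\ell^r)\ell^{-r}H_{a,\bnu}(\mathbf c)\prod_{g<p\le\ell,\,p\nmid\ell c_1\cdots c_k}(1-\rho(p)/p)$. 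This is Lemma~\ref{lem:weighted-sieve-ell}.
\item Prove Rankin-type estimates for $W=FH_{a,\bnu}$ itself, as in Lemma~\ref{lem:stability}(ii),(iii). These rest on the submultiplicativity of $T=G_FH_{a,\bnu}$ and the local bound \eqref{eq:moderate}.
\end{enumerate}

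\textbf{A second, smaller gap.} Part (i) of Lemma~\ref{lem:stability} is also what removes the factor $\prod_{p\mid d}(1-\rho(p)/p)^{-1}\ll(d/\varphi(d))^g$ in case (i). You cannot simply absorb it into ``$A^{\Omega}$-adjustments'', since that changes the sum on the right of \eqref{eq:main-theorem}. What is required is $\sum W\theta\ll\sum W$ for multiplicative $\theta$ with $\theta(p^\ell)=1+O(p^{-1})$.
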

\begin{remark}
    Since \(Q\) is
primitive, \(\rho(p)\leq g\) for every prime \(p\); hence all factors in the product are strictly positive.  A fixed prime divisor $p$ of \(Q\) can occur
only for \(p\leq g\), which is excluded from the
sifting product.
\end{remark}
Put
\begin{align}\label{eq:T-Delta-def}
 T_{a,\bnu,F}(\mathbf d)
 :=G_F(\mathbf d)H_{a,\bnu}(\mathbf d)
 \quad \mbox{ and } \quad
 \Delta_p(a,\bnu,F):=   1+\sum_{|\boldsymbol\ell|\geq1}
  T_{a,\bnu,F}(p^{\boldsymbol\ell}).    
\end{align}
For $0\neq S\in \Z$, define 
\begin{equation}\label{eq:Delta-D}
 \Delta_S(a,\bnu,F)
 :=
 \prod_{p\mid S}
 \left(
  1+\sum_{|\boldsymbol\ell|\geq1}
  T_{a,\bnu,F}(p^{\boldsymbol\ell})
 \right),
\end{equation}
where the product is interpreted as \(1\) when \(|S|=1\). Although the
notation suppresses \(Q_1,\ldots,Q_k\), the factor in
\eqref{eq:Delta-D} consists precisely of their weighted local congruence data
at the primes dividing \(D\).
 When $S=D$, it is the weighted analogue of Henriot's discriminant factor. For our purpose, we also consider $S=2D\lc(Q)$. 

\begin{corollary}
\label{cor:discriminant}
Under the assumption of Theorem \ref{thm:main},  for any $x,y$ satisfying \eqref{eq:xyrange},
\begin{equation}\label{eq:discriminant-separated}
\begin{split}
 &\sum_{\substack{x<n\leq x+y\\Q(n)\neq0}}
 F(|Q_1(n)|,\ldots,|Q_k(n)|)
 \prod_{j=1}^k|\Kl(a;|Q_j(n)|)|^{\nu_j}\\
 &\quad\ll
 \Delta_D(a,\bnu,F)\,y
 \prod_{g<p\leq x}\left(1-\frac{\rho(p)}p\right)
 \sum_{\substack{d_1\cdots d_k\leq x\\
                  (d_1\cdots d_k,D)=1\\
                  (d_i,d_j)=1\ (i\neq j)}}
 F(\mathbf d)H_{a,\bnu}(\mathbf d),
\end{split}
\end{equation}
where  $ \Delta_D(a,\bnu,F)$ (defined in \eqref{eq:Delta-D} with $S=\disc Q$) satisfies \begin{equation*}\label{eq:Delta-D-finite}
 \Delta_D(a,\bnu,F)
 \asymp
 \prod_{p\mid D}
 \left(
  1+\sum_{1\leq|\boldsymbol\ell|<8g}
  T_{a,\bnu,F}(p^{\boldsymbol\ell})
 \right).
\end{equation*}
In fact, 
\begin{equation*}\label{eq:Delta-D-bound}
 1\leq\Delta_D(a,\bnu,F)
 \leq\prod_{p\mid D}\left(1+\frac Cp\right)
 \leq\left(\frac{|D|}{\varphi(|D|)}\right)^C.
\end{equation*}
The implied constants in $\asymp$  and the constant \(C\ge 1\) depend on \(\bnu,g,k,A,B\) only, while the implied $\ll$-constant in \eqref{eq:discriminant-separated} depends only on \(\bnu,g,k,\alpha,\delta,A,B\). 
\end{corollary}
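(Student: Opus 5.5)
The plan is to start from Theorem~\ref{thm:main} and factor the sum over $\mathbf d$ at the primes dividing $D$. Each $\mathbf d$ with $d_1\cdots d_k\le x$ is written uniquely as $\mathbf d=\mathbf d'\mathbf d''$, where every prime dividing $d_1'\cdots d_k'$ divides $D$ and $(d_1''\cdots d_k'',D)=1$.

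The first step is a multiplicativity property of $H_{a,\bnu}$ over coprime vectors. Suppose $\mathbf d=\mathbf d'\mathbf d''$ with $(d_1'\cdots d_k',d_1''\cdots d_k'')=1$. Then $L(\mathbf d)=L(\mathbf d')L(\mathbf d'')$, and the condition $\mathcal P_{\mathbf Q}(\mathbf d,r)$ splits by CRT into the conditions modulo $L(\mathbf d')$ and modulo $L(\mathbf d'')$. For the Kloosterman factor, twisted multiplicativity \eqref{eq:Kl-mult} gives
\[
\Kl\bigl(a\overline{Q_j(r)/d_j}^{2};d_j'd_j''\bigr)
=\Kl\bigl(a\overline{Q_j(r)/d_j}^{2}\,\overline{d_j''}^{2};d_j'\bigr)\,
\Kl\bigl(a\overline{Q_j(r)/d_j}^{2}\,\overline{d_j'}^{2};d_j''\bigr).
\]
Modulo $d_j'$ we have $(Q_j(r)/d_j)\,d_j''=Q_j(r)/d_j'$, so the first factor is exactly $\Kl(a\overline{Q_j(r)/d_j'}^2;d_j')$, and symmetrically for the second. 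Hence the summand factors, and
\[
H_{a,\bnu}(\mathbf d)=H_{a,\bnu}(\mathbf d')H_{a,\bnu}(\mathbf d''),
\]
so $H$ is multiplicative over coprime vectors. This is the observation that replaces the plain multiplicativity used by Henriot.

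Next, by \eqref{submultiplicativity}, $F(\mathbf d)\le G_F(\mathbf d')F(\mathbf d'')$, and $T$ is submultiplicative. Dropping the constraint $d_1\cdots d_k\le x$ on the $D$-part, the sum in \eqref{eq:main-theorem} is bounded by
\[
\sum_{\mathbf d'}T(\mathbf d')\sum_{\mathbf d''}F(\mathbf d'')H(\mathbf d'')\le\Delta_D\sum_{\mathbf d''}F(\mathbf d'')H(\mathbf d''),
\]
where the inequality $\sum_{\mathbf d'}T(\mathbf d')\le\Delta_D$ comes from expanding the Euler product.

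The condition $(d_i'',d_j'')=1$ is free, because $H(\mathbf d'')=0$ otherwise. If $p\mid d_i,d_j$ with $p\nmid D$, then $p$ divides both $Q_i(r)$ and $Q_j(r)$, hence $p\mid\Res(Q_i,Q_j)$, which divides $D$, a contradiction. This proves \eqref{eq:discriminant-separated}.

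It remains to bound $\Delta_D$. The lower bound $\Delta_D\ge1$ is trivial. For a local term, \eqref{eq:Kl-Weil} gives $|\Kl(\cdot;p^{\ell_j})|^{\nu_j}\le 2^{3\nu_j/2}\cdot2^{\nu_j}$, and $G_F(p^{\boldsymbol\ell})\le A^{|\boldsymbol\ell|}$. Hence
\[
T(p^{\boldsymbol\ell})\ll_{\bnu}A^{|\boldsymbol\ell|}\,L(p^{\boldsymbol\ell})^{-1}\#\{r\bmod L(p^{\boldsymbol\ell}):\mathcal P_{\mathbf Q}(p^{\boldsymbol\ell},r)\}.
\]
The count is at most the density of $r$ with $p^{\ell_j}\mid Q_j(r)$ for all $j$, that is, at most $\rho_{Q}(p^{|\boldsymbol\ell|})/p^{|\boldsymbol\ell|}$ up to the way the $Q_j$ are combined.

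The main obstacle is the uniform bound $\sum_{|\boldsymbol\ell|\ge1}T(p^{\boldsymbol\ell})\le C/p$ at $p\mid D$, where $\rho$ can be large for small exponents. Here I will use Stewart's estimate, as in Henriot: for primitive $Q$, $\rho_Q(p^m)\le c(g)\,p^{m-m/g}$ roughly, and in particular $\rho_Q(p^m)/p^m\le c(g)p^{-m/g}$. Summing $A^m p^{-m/g}$ over $m\ge1$ does not give $O(1/p)$ directly. So I will follow Henriot's refinement: the terms with $m<8g$ contribute at most $O(1)\cdot\#$ terms relative to $1$, giving the $\asymp$ statement, and the tail $m\ge 8g$ is bounded by $\sum_{m\ge 8g}A^m c\,p^{-m/g}$. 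For $p\ge p_0(A,g)$ this tail is $\ll p^{-7}$; the finitely many small primes are absorbed into the constant.

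For the stated $1+C/p$ bound on the small-$|\boldsymbol\ell|$ part, I expect to need Henriot's sharper form of Stewart's bound, $\rho_Q(p^m)\ll_g p^{m-1}$ for $m\ge1$ when $Q$ is primitive, up to a multiplicity tied to $v_p(D)$. If this form is not available, I would settle for $\prod_{p\mid D}(1+C/p)$ with $C$ allowed to absorb those terms, using $\rho(p)\le g$ at $m=1$. Finally, $\prod_{p\mid D}(1+C/p)\le\prod_{p\mid D}(1-1/p)^{-C}=(|D|/\varphi(|D|))^C$.
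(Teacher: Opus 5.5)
Your reduction to $\Delta_D$ times the coprime sum is correct and is what the paper does: multiplicativity of $H_{a,\bnu}$ over coprime vectors, $F(\mathbf d)\le G_F(\mathbf d')F(\mathbf d'')$, expansion of $\sum_{\mathbf d'}T(\mathbf d')$ into the Euler product, and vanishing of $H_{a,\bnu}(\mathbf d'')$ when $(d_i'',d_j'')>1$ via $p\mid\Res(Q_i,Q_j)\mid D$. The gaps are in bounding $\Delta_D$, which is where the uniformity in $D$ and $a$ lives.

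\textbf{The head terms $1\le|\boldsymbol\ell|<8g$.} You treat $\rho_Q(p^m)\ll_g p^{m-1}$ as a possibly unavailable refinement of Stewart with a factor tied to $v_p(D)$. In fact it is elementary and has no such factor. Since $Q$ is primitive, $Q\bmod p$ is a nonzero polynomial of degree at most $g$, so $\rho_Q(p)\le g$. Every root modulo $p^m$ reduces to a root modulo $p$, and each class modulo $p$ has $p^{m-1}$ lifts. Hence $\rho_Q(p^m)\le p^{m-1}\rho_Q(p)\le gp^{m-1}$ for every prime $p$. This is exactly what the paper uses in Lemma~\ref{lem:T-Delta}, and it gives $T_{a,\bnu,F}(p^{\boldsymbol\ell})\ll p^{-1}$ for each of the boundedly many $\boldsymbol\ell$ with $|\boldsymbol\ell|<8g$.

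Your fallback does not work. For $2\le m<g$, \eqref{eq:Stewart} only gives $\rho_Q(p^m)/p^m\le gp^{-m/g}$, and terms of size $p^{-2/g}$ cannot be absorbed into $C/p$. A product like $\prod_{p\mid D}(1+Cp^{-2/g})$ is not bounded by $(|D|/\varphi(|D|))^{C}$.

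\textbf{The tail $|\boldsymbol\ell|\ge 8g$.} You use only $G_F(p^{\boldsymbol\ell})\le A^{|\boldsymbol\ell|}$, and then $\sum_m A^mp^{-m/g}$ diverges for every prime $p\le A^g$. So at small primes dividing $D$ you have not shown that $\Delta_p$ is even finite, let alone bounded independently of $Q$ and $a$. ``Absorbing the finitely many small primes into the constant'' presupposes exactly such a uniform bound.

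The missing ingredient is the other half of the $\mathcal M_k$ condition: $G_F(p^{\boldsymbol\ell})\le Bp^{\varepsilon|\boldsymbol\ell|}$ with $\varepsilon\le 1/(2g)$, which the hypothesis of Theorem~\ref{thm:main} guarantees. Combined with Stewart's bound, this gives $T_{a,\bnu,F}(p^{\boldsymbol\ell})\ll Bp^{-|\boldsymbol\ell|/(2g)}$. Summing over the $\binom{u+k-1}{k-1}$ vectors with $|\boldsymbol\ell|=u\ge 8g$ yields a tail $\ll p^{-2}$, uniformly for all primes $p\ge 2$, so small primes need no separate treatment.

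With both fixes you obtain $\Delta_p=(1+O(p^{-2}))\bigl(1+\sum_{1\le|\boldsymbol\ell|<8g}T_{a,\bnu,F}(p^{\boldsymbol\ell})\bigr)$ and $1\le\Delta_p\le 1+C/p$. These give the $\asymp$ statement and the bound $(|D|/\varphi(|D|))^C$, as in the paper.
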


Let
\begin{equation*}\label{eq:bad-primes}
 \mathcal S
 :=\{p:p\leq g\}\cup\{p:p\mid2\lc(Q)D\}.
\end{equation*}

\begin{corollary}
\label{cor:Henriot-cor2}
For a prime \(p\), an
integer \(\ell\geq1\), and \(1\leq j\leq k\), write
\[
 \mathbf p_j^\ell:=(1,\ldots,1,p^\ell,1,\ldots,1),
\]
where \(p^\ell\) occupies the \(j\)-th coordinate in \(\mathbf p_j^\ell\). 
Under the assumptions of Theorem \ref{thm:main} holds, for any $x,y$ satisfying \eqref{eq:xyrange},
\begin{equation*}\label{eq:Henriot-cor2-euler}
\begin{split}
 &\sum_{\substack{x<n\leq x+y\\Q(n)\neq0}}
 F(|Q_1(n)|,\ldots,|Q_k(n)|)
 \prod_{j=1}^k|\Kl(a;|Q_j(n)|)|^{\nu_j}\\
 &\quad\ll 
 \Delta_S(a,\bnu,F)\,y
 \prod_{g<p\leq x}\left(1-\frac{\rho(p)}p\right)
 \prod_{\substack{p\le x\\p\nmid S}}
 \left(
 1+
 \sum_{j=1}^k
 G_F(\mathbf p_j^1)
 \frac{\rho_{Q_j}(p)
       \lambda_{a,\nu_j}(p)}{p}
 \right),
\end{split}
\end{equation*}
where \(S:= \prod_{p\in \mathcal{S}} p\).  Moreover, we have
\begin{equation}\label{eq:vartheta-bad}
1\le  \Delta_S(a,\bnu,F)\le \left(\frac{S}{\varphi(S)}\right)^C.
\end{equation}
The implied $\ll$-constant has the same dependencies as in Theorem~\ref{thm:main} and the constant \(C\) is the same as the one in Corollary~\ref{cor:discriminant}. 
\end{corollary}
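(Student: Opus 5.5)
Corollary~\ref{cor:Henriot-cor2} follows from Corollary~\ref{cor:discriminant} by splitting off the bad primes and bounding the remaining sum by an Euler product, with $\lambda_{a,\nu}(p)$ understood as the local $\nu$-th moment of $|\Kl(a\,\cdot\,;p)|$ over units, so that $\rho_{Q_j}(p)\lambda_{a,\nu_j}(p)/p$ is the value $H_{a,\bnu}(\mathbf p_j^1)$ for $p\nmid S$.

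The first step reruns the proof of Corollary~\ref{cor:discriminant} with $D$ replaced by $S$. Starting from Theorem~\ref{thm:main}, factor each $\mathbf d$ as $\mathbf d=\mathbf d'\mathbf d''$, where every prime dividing $d_1'\cdots d_k'$ lies in $\mathcal S$ and $(d_1''\cdots d_k'',S)=1$. By the Chinese remainder theorem and \eqref{key}, the admissible classes modulo $L(\mathbf d)$ factor as products of admissible classes modulo $L(\mathbf d')$ and $L(\mathbf d'')$. I would then establish
\[
H_{a,\bnu}(\mathbf d'\mathbf d'')\le H_{a,\bnu}(\mathbf d')\,H_{a,\bnu}(\mathbf d'').
\]
The twisted arguments $a\overline{Q_j(r)/d_j}^2$ do not split cleanly, but \eqref{eq:Kl-mult} writes $\Kl(\,\cdot\,;d_j'd_j'')$ as a product of a $d_j'$-factor and a $d_j''$-factor. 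Each factor depends only on $r$ modulo the corresponding $L$, after absorbing the unit $\bar d''^2$ into the twist. Reindexing the twist is a bijection on units, so the factorisation should in fact be an equality at the level of the averaged sums.

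Combining this with $F(\mathbf d)\le G_F(\mathbf d')F(\mathbf d'')$ from \eqref{submultiplicativity}, and extending the $\mathbf d'$-sum to infinity, gives the factor $\Delta_S(a,\bnu,F)$.

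The bound \eqref{eq:vartheta-bad} is proved exactly as the bound on $\Delta_D$ in Corollary~\ref{cor:discriminant}, since that argument was local at each prime and never used $p\mid D$. The primes $p\le g$ and $p\mid 2\lc(Q)$ are handled by the same Stewart-type bound on $\rho$ together with \eqref{eq:Kl-Weil}.

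For the $\mathbf d''$-sum with $(d''_1\cdots d''_k,S)=1$:

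\begin{itemize}
\item Since $p\nmid D$, at most one $d_j''$ is divisible by $p$, because $Q_i$ and $Q_j$ have no common root modulo $p$.
\item By Hensel's lemma, $\rho_{Q_j}(p^\ell)=\rho_{Q_j}(p)$.
\item Hence $H(\mathbf p_j^\ell)=\rho_{Q_j}(p)\,p^{-\ell}\cdot(\text{average of }|\Kl|^{\nu_j})$, and the average is at most $2^{\nu_j}$ for odd $p$.
\end{itemize}

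Writing $F(\mathbf d'')\le G_F(\mathbf d'')$ and applying multiplicativity of $H$ on coprime $S$-free arguments (the equality above), the sum over $d_1''\cdots d_k''\le x$ is at most $\prod_{p\le x,\,p\nmid S}\bigl(1+\sum_{j,\ell} G_F(\mathbf p_j^\ell)H(\mathbf p_j^\ell)\bigr)$.

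The terms with $\ell=1$ give exactly the displayed factor, with $\lambda_{a,\nu_j}(p)$ the unit-average of $|\Kl(\cdot;p)|^{\nu_j}$. The terms with $\ell\ge2$ contribute $\ll\sum_{\ell\ge2}A^\ell g\,2^{|\bnu|}p^{-\ell}$, which is $O(p^{-2})$ when $p>A$; the smaller primes divide nothing in $S$ but are finite in number with bounded factors. Multiplying by $1+O(p^{-2})$ and taking the convergent product over $p$ therefore costs only a constant.

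I expect the main obstacle to be the exact factorisation of $H_{a,\bnu}$ across coprime components. One must carefully check that, under $\mathcal P_{\mathbf Q}(\mathbf d,r)$, the quantity $Q_j(r)/d_j$ modulo $d_j'$ differs from $Q_j(r)/d_j'$ by the unit $d_j''$. It then has to be shown that this shift, combined with \eqref{eq:Kl-mult}, yields precisely the $\mathbf d'$- and $\mathbf d''$-local weights. I would verify this at prime powers first and then extend it by induction over the prime factorisation.
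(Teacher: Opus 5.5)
Your proposal is correct and is essentially the paper's argument. The only structural difference is this: the paper first applies Corollary~\ref{cor:discriminant} to split off the primes dividing $D$, then absorbs the remaining primes of $\mathcal S$ as local factors $\Delta_p(a,\bnu,F)\le 1+Cp^{-1}$ inside the Euler product \eqref{eq:c33}, whereas you split off all of $S$ at once. Also, the factorisation of $H_{a,\bnu}$ you flag as the main obstacle is already Lemma~\ref{lem:H-properties}, an exact equality, so you can simply cite it.

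Two small precisions. First, by \eqref{eq:H-good} one has $H_{a,\bnu}(\mathbf p_j^1)=(1-p^{-1})\rho_{Q_j}(p)\lambda_{a,\nu_j}(p)/p$, so the $\ell=1$ terms are bounded by, not equal to, the displayed factor; that bound is all you need. Second, for primes $p\nmid S$ with $p\le A$, the bound $G_F(\mathbf p_j^\ell)\le A^\ell$ does not make the $\ell\geq2$ series converge. There you must use the $B(\cdot)^\varepsilon$ branch of \eqref{eq:standard-majorant-bound}, as the paper does uniformly in $p$ to obtain $O(p^{-3/2})$.
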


Under a slightly stronger condition on \(\varepsilon\), the same
Euler product controls the corresponding sum over primes. We refer to \cite[Theorem~3]{NT-1998} of Nair--Tenenbaum, and \cite[Theorem~7]{Henriot-2012} of Henriot for the technical argument that allows us to prove the following.

\begin{theorem}\label{thm:prime-variable}
Under the same assumptions as in Theorem~\ref{thm:main}, and suppose that
\(Q(0)\ne0\), and 
\begin{equation}\label{eq:prime-epsilon-range}
 0<\varepsilon
 \leq\frac{\alpha}
 {100(g+1)(g+1+1/\delta)},
\end{equation}
we have, uniformly for \(x,y\) in the same range as \eqref{eq:xyrange}, 
\begin{equation}\label{eq:prime-variable}
\begin{split}
 &\sum_{\substack{x<p\leq x+y\\Q(p)\ne0}}
 F(|Q_1(p)|,\ldots,|Q_k(p)|)
 \prod_{j=1}^k|\Kl(a;|Q_j(p)|)|^{\nu_j}\\
 &\quad\ll
 \frac{|Q(0)|}{\varphi(|Q(0)|)}
 \frac{y}{\log x}\,
 \Delta_S(a,\bnu,F)
 \prod_{g<p\leq x}\left(1-\frac{\rho(p)}p\right)
 \prod_{\substack{p\leq x\\p\nmid S}}
 \left(
 1+\sum_{j=1}^k
 G_F(\mathbf p_j^1)
 \frac{\rho_{Q_j}(p)\lambda_{a,\nu_j}(p)}p
 \right).
\end{split}
\end{equation}
The constant \(c_0\) and the implied constant have the same
dependencies as in Theorem~\ref{thm:main}.
\end{theorem}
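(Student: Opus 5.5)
The plan is to deduce Theorem~\ref{thm:prime-variable} from the $n$-variable machinery behind Theorem~\ref{thm:main} and Corollary~\ref{cor:Henriot-cor2}, following \cite[Theorem~3]{NT-1998} and \cite[Theorem~7]{Henriot-2012}. The device is to adjoin the linear factor $Q_0(X)=X$. Put $\tilde Q=XQ$. Since $Q(0)\neq0$, $X$ is coprime to every $Q_j$, so $\tilde Q$ is again a product of pairwise coprime irreducibles of degree $g+1$. Its discriminant is $\disc(\tilde Q)=D\,Q(0)^2$ up to sign. The range \eqref{eq:prime-epsilon-range} is exactly the range of Theorem~\ref{thm:main} with $g$ replaced by $g+1$, and $\|\tilde Q\|=\|Q\|$, so the hypothesis $x\ge c_0\|Q\|^\delta$ transfers.

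We cannot simply put a weight on the prime variable, because the prime indicator is not in $\mathcal M$. Instead we run the sieve step of the proof of Theorem~\ref{thm:main} with an extra sifting condition $P^-(n)>z$, where $z=x^{\eta}$ and $\eta$ is small in terms of $\alpha,\delta,g$. For $x^{\eta}<p$ the primes $p\in(x,x+y]$ are among such $n$, and $F(|Q_1(p)|,\dots)$ is unchanged. Concretely, we factor each $|Q_j(n)|=d_jm_j$ with $d_j$ the $z$-smooth-ish part chosen as in the main proof, fix the residue class $r\bmod L(\mathbf d)$, and use the observation \eqref{key} together with \eqref{p-property}. This lets the Kloosterman weight on the $d_j$-part, via the twisted multiplicativity \eqref{eq:Kl-mult}, be replaced by the constant $|\Kl(a\overline{Q_j(r)/d_j}^{2};d_j)|$. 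The weight on the complementary part is bounded by \eqref{eq:Kl-Weil} and absorbed into $F$, exactly as in the $n$-case.

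What remains is a sieve count of $n$ in a residue class mod $L(\mathbf d)$ with $n$ and the $Q_j(n)/d_j$ free of small primes. A Selberg or Brun upper-bound sieve for the polynomial $\tilde Q$ gives
\[
 \frac{y}{L(\mathbf d)}\prod_{p\le z,\,p\nmid L(\mathbf d)}\Bigl(1-\frac{\rho_{\tilde Q}(p)}p\Bigr).
\]
Since $\rho_{\tilde Q}(p)=\rho(p)+1$ for $p\nmid Q(0)$ and $\rho(p)$ for $p\mid Q(0)$, this splits as
\[
 \prod_{g<p\le x}\Bigl(1-\frac{\rho(p)}p\Bigr)\cdot\frac{1}{\log x}\cdot\frac{|Q(0)|}{\varphi(|Q(0)|)},
\]
up to bounded factors coming from $p\le g+1$. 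The $d$-summation is then handled verbatim as in Corollaries~\ref{cor:discriminant} and~\ref{cor:Henriot-cor2}. Primes $p\mid Q(0)$ entering the discriminant affect only the local factor $\Delta$ for the enlarged tuple. The coordinate for $Q_0=X$ forces $d_0=1$, since $n$ is a prime exceeding $z$, so these extra local factors collapse and leave $\Delta_S(a,\bnu,F)$.

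The main obstacle is the bookkeeping in the Nair--Tenenbaum case split, i.e.\ the cases where $Q_j(n)$ has a large smooth part or a large prime-power part. These cases must still be estimated with the extra condition $P^-(n)>z$ retained, in order to gain $1/\log x$ uniformly in the Kloosterman twists. I would take the cases of the proof of Theorem~\ref{thm:main} one by one, replacing the count of $n\equiv r \bmod L(\mathbf d)$ in each by the sifted count above. The large-prime-power and very smooth cases should lose only $x^{-c}$, which is harmless. The key check is that Stewart-type bounds for $\rho_{\tilde Q}(p^\ell)$ remain uniform, which is where the tighter $\varepsilon$-range is consumed.
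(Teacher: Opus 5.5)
Your plan is sound, but it takes a much longer route than the paper. The two approaches part ways at your claim that one cannot put a weight on the prime variable because the prime indicator is not in $\mathcal M$.

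The paper's device is that $\chi_x(n)=\mathbf 1_{P^-(n)>x}$ \emph{is} admissible: $\chi_x(1)=1$ and $\chi_x(ab)\le\chi_x(b)$. Moreover, on $x<n\le x+y\le 2x$ it coincides with the prime indicator. So the paper proceeds as follows.
\begin{enumerate}
\item It sets $F_0(\mathbf d,d_{k+1})=F(\mathbf d)\chi_x(d_{k+1})\in\mathcal M_{k+1}(A,B,\varepsilon)$, and takes the tuple $(Q_1,\ldots,Q_k,X)$ with exponents $(\bnu,0)$.
\item It applies Theorem~\ref{thm:main} and Corollary~\ref{cor:Henriot-cor2} as black boxes. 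As you noticed yourself, \eqref{eq:prime-epsilon-range} is exactly the $\varepsilon$-range of Theorem~\ref{thm:main} with $g$ replaced by $g+1$, and $\|XQ\|=\|Q\|$.
\item The extra coordinate drops out because $G_{F_0}(\mathbf p_{k+1}^\ell)=0$ for $p\le x$; this is your ``$d_0=1$''.
\item The only new computation is $\widehat V_{g+1}(x)\ll\frac{|Q(0)|}{\varphi(|Q(0)|)}\frac{V_g(x)}{\log x}$. This follows from $\rho_{XQ}(p)=\rho(p)+\mathbf 1_{p\nmid Q(0)}$ and Mertens, and you carry it out too.
\end{enumerate}

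Your version instead reopens the five-class decomposition and inserts $XQ$ into each sieve step. That does work:
\begin{itemize}
\item in $C_1$ you gain the extra factor $1/\log z$;
\item in $C_3$ the sieve product up to $\ell$ carries $u^{g+1}$ rather than $u^g$, which is still absorbed by $\e^{-2u}$;
\item $C_2$, $C_4$, $C_5$ are power savings, as you say.
\end{itemize}
However, this amounts to reproving Theorem~\ref{thm:main} for the enlarged tuple, and it gains nothing over the black-box reduction.

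One small correction: the tighter $\varepsilon$-range is not consumed by Stewart's bound, which holds for $XQ$ with degree $g+1$ automatically. It is consumed because the sifted polynomial now has degree $g+1$. That changes the sieve dimension and level in Lemma~\ref{lem:polynomial-sieve} and the parameter bookkeeping in \eqref{eq:epsilon-budget}.
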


For \(\nu>-1\), define
\begin{equation}\label{eq:sato-moment}
 \sn(\nu)
:=\frac2\pi\int_0^\pi|2\cos\theta|^\nu\sin^2\theta\,d\theta
 =\frac{2^\nu\Gamma((\nu+1)/2)}
 {\sqrt\pi\,\Gamma((\nu+4)/2)}.
\end{equation}
In particular,
\[
 \sn(0)=1,\qquad \sn(1)=\frac8{3\pi},\qquad
 \sn(2)=1,\qquad \sn(4)=2.
\]

\begin{corollary}\label{cor:normalized}
Let $0<\alpha, \delta<1$, \(k\in \N\) and 
\(z_j>0\) be any fixed number for \(1\leq j\leq k\). Under the assumption of Theorem \ref{thm:main} for $Q_j$ \((1\le j\le k)\),  
\begin{equation*}\label{eq:normalized-moment}
 \sum_{\substack{x<n\leq x+y\\Q(n)\neq0}}
 \prod_{j=1}^k
 z_j^{\omega(|Q_j(n)|)}
 |\Kl(a;|Q_j(n)|)|^{\nu_j}
 \ll
 y(\log x)^{\sum_{j=1}^k(z_j\sn(\nu_j)-1)}
\end{equation*}
holds uniformly in
\(a\in\Z\setminus\{0\}\), for any \(x\geq c_0\|Q\|^\delta\) and \(x^\alpha<y\leq x\), where the constant $c_0$  and the implied constant  depend on $\bnu,\mathbf z,Q_1,\ldots,Q_k,\alpha,\delta$ only.
\end{corollary}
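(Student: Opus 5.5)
We plan to deduce Corollary~\ref{cor:normalized} from Corollary~\ref{cor:Henriot-cor2} applied to \(F(\mathbf n)=\prod_j z_j^{\omega(n_j)}\). First we check that \(F\in\mathcal M_k(A,B,\varepsilon)\). Put \(A=\max(1,z_1,\ldots,z_k)\). For \((a_1\cdots a_k,b_1\cdots b_k)=1\) we have \(\omega(a_jb_j)=\omega(a_j)+\omega(b_j)\), so
\[
F(\mathbf a\mathbf b)=F(\mathbf b)\prod_j z_j^{\omega(a_j)}\le A^{\Omega(a_1\cdots a_k)}F(\mathbf b).
\]
In addition, \(A^{\omega(m)}\ll_{\varepsilon} m^\varepsilon\), so a constant \(B=B(A,\varepsilon)\) works for any \(\varepsilon\) in the admissible range, which is fixed in terms of \(\alpha,\delta,g\). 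Thus \(G_F(\mathbf p_j^1)=z_j\) exactly. Also \(\Delta_S\le (S/\varphi(S))^C\) is a constant depending on the \(Q_j\), and it is absorbed into the implied constant.

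By Corollary~\ref{cor:Henriot-cor2}, the left side is then
\[
\ll y\prod_{g<p\le x}\Bigl(1-\frac{\rho(p)}p\Bigr)\prod_{p\le x,\,p\nmid S}\Bigl(1+\sum_j\frac{z_j\rho_{Q_j}(p)\lambda_{a,\nu_j}(p)}p\Bigr).
\]
Here \(\lambda_{a,\nu}(p)\) is the local average of \(|\Kl(a\bar u^2;p)|^\nu\) over the admissible classes (defined later in the paper). We take logarithms and split into two tasks.

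\emph{First product.} Since the \(Q_j\) are pairwise coprime, \(\rho(p)=\sum_j\rho_{Q_j}(p)\) for \(p\nmid D\). The prime ideal theorem in the number field of each irreducible \(Q_j\) gives \(\sum_{p\le x}\rho_{Q_j}(p)/p=\log\log x+O_{Q_j}(1)\). Hence the first product is \(\asymp(\log x)^{-k}\).

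\emph{Second product.} We need, uniformly in \(a\ne0\), the bound
\[
\sum_{p\le x,\ p\nmid S}\frac{\rho_{Q_j}(p)\lambda_{a,\nu_j}(p)}p\le \sn(\nu_j)\log\log x+O(1).
\]
This step is the main obstacle. For \(p\nmid a\), the quantity \(\lambda_{a,\nu}(p)\) is an average of \(|\Kl(b;p)|^\nu\) over the \(b=a\bar u^2\), where \(u=Q_j(r)/p\) ranges over units mod \(p\). Since \(\mathcal P\) pins \(r\) to classes mod \(p^2\) lifting the \(\rho_{Q_j}(p)\) roots, \(u\) mod \(p\) ranges over essentially all residues, and each \(b\) is hit equally in the square class of \(a\).

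We therefore expect \(\lambda_{a,\nu}(p)=\frac1{p-1}\sum_{b}|\Kl(b;p)|^\nu+O(p^{-1/2})\), after restricting \(b\) to one square class. Katz's equidistribution theorem for Kloosterman sums in the vertical aspect (with a rate \(O(p^{-1/2})\), uniformly over square classes via the twist by the Legendre symbol) then gives
\[
\frac1{p-1}\sum_b|\Kl(b;p)|^\nu=\sn(\nu)+O_\nu(p^{-1/2})
\]
for polynomial test functions. For non-integer \(\nu\) we pass to \(|x|^\nu\) by Weierstrass approximation on \([-2,2]\), which gives \(\sn(\nu)+o(1)\). That is insufficient pointwise in \(p\), but enough after summing with a Chebyshev-polynomial approximation whose degree grows slowly with \(p\), with error \(O((\log p)^{-2})\) say; this keeps \(\sum\rho(p)\,\mathrm{err}(p)/p\) convergent.

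For the finitely many \(p\mid a\) we cannot control \(a\), so instead we use \(\lambda\le 2^{\nu}\) together with \(\sum_{p\mid a}1/p\le \log\log\log|a|+O(1)\). This is \emph{not} uniform in \(a\), so we will need a better treatment. For \(p\mid a\), \(\Kl(a;p)=-\mu\)-type value \(-p^{-1/2}\)-ish, i.e. \(|\Kl|\le p^{-1/2}\), so \(\lambda_{a,\nu}(p)\le p^{-\nu/2}\). These primes therefore contribute at most \(\rho\lambda/p\ll p^{-1-\nu/2}\), which is summable and gives uniformity in \(a\). Combining the two products yields \((\log x)^{\sum_j(z_j\sn(\nu_j)-1)}\).
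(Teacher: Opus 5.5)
Your proposal is correct and is essentially the paper's proof: apply Corollary~\ref{cor:Henriot-cor2} with $F(\mathbf d)=\prod_j z_j^{\omega(d_j)}$ (so $G_F(\mathbf p_j^1)=z_j$), absorb $\Delta_S$, use $\rho(p)=\sum_j\rho_{Q_j}(p)$ for $p\notin\mathcal S$, a bound $\lambda_{a,\nu}(p)\le\sn(\nu)+(\text{summable error})$ uniform in $a$, and Erd\H{o}s's estimate $\sum_{p\le x}\rho_{Q_j}(p)/p=\log\log x+O_{Q_j}(1)$. The only difference is that you re-derive the local input yourself, using Katz and Fouvry--Michel symmetric-power bounds plus a polynomial approximation of $|2\cos\theta|^\nu$ whose degree grows slowly with $p$. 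The paper instead quotes \eqref{eq:lambda-prime-upper} from Lemma~\ref{lem:lambda}, which is proved from the same symmetric-power bounds via the discrepancy estimate \eqref{eq:a-ST-discrepancy} and integration by parts, and it uses the same exact evaluation $\lambda_{a,\nu}(p)=p^{-\nu/2}$ at $p\mid a$ that you give.
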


\begin{remark}
Theorem \ref{thm:main} and Corollaries \ref{cor:discriminant} and
\ref{cor:Henriot-cor2} are uniform in \(D\) in the precise sense stated there.
Corollary \ref{cor:normalized} is uniform in \(a\), but its proof uses
Erd\H{o}s's estimate \cite[Lemma~7]{Erdos-1952},
\[
 \sum_{p\leq x}\frac{\rho_{Q_j}(p)}p
 =\log\log x+O_{Q_j}(1),
\]
so the passage from the explicit Euler product to a pure power of
\(\log x\) introduces a further \(Q_j\)-dependent Mertens constant.  No
discriminant-uniform claim is made for that last simplification without an
additional effective prime-ideal theorem.
\end{remark}

We now give a lower bound for the weighted moments over \((x/2,x]\).
The restriction to squarefree integers coprime to \(2a\) will also be
useful in the applications.

\begin{theorem}\label{thm:lower}
Let \(z,\nu,A>0\) be fixed. For all sufficiently large \(x\) in terms of
\(A,z,\nu\), uniformly for nonzero integers \(a\) with \(|a|\leq x^A\),
\begin{equation*}\label{eq:lower-bound}
 \sum_{\substack{x/2<n\leq x\\n\ \mathrm{squarefree}\\(n,2a)=1}}
 z^{\omega(n)}|\Kl(a;n)|^\nu
 \gg_{A,z,\nu}
 \prod_{p\mid a}\left(1+\frac{z\sn(\nu)}p\right)^{-1}
 x(\log x)^{z\sn(\nu)-1}.
\end{equation*}
\end{theorem}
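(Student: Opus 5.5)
We follow the strategy of Fouvry and Michel: restrict to integers with a controlled prime factorization and use the equidistribution of Kloosterman angles for prime moduli in the Sato--Tate law, in Katz's vertical form with an error uniform in the twisting parameter. We write \(n=p_1\cdots p_r\), with distinct primes \(p_i\nmid 2a\) and \(r\) close to \(z\log\log x\). By \eqref{eq:Kl-mult} and squarefreeness,
\[
|\Kl(a;n)|^\nu=\prod_{i=1}^r|\Kl(a\,\overline{(n/p_i)}^{2};p_i)|^\nu .
\]
A direct joint equidistribution over all the \(p_i\) is out of reach. We therefore use a factorization \(n=mp\) with \(p\) a large prime, \(p>x^{1/2}\) say, and \(m\) squarefree, coprime to \(2ap\). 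Then
\[
|\Kl(a;mp)|^\nu=|\Kl(a\bar p^2;m)|^\nu\,|\Kl(a\bar m^2;p)|^\nu .
\]
The \(p\)-factor is handled by equidistribution in \(p\). The \(m\)-factor is \emph{not} uniform in \(p\), so we must average over \(p\) inside the residue classes of \(p\bmod m\).

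\textbf{Step 1 (lower bound for the \(m\)-part on average).} For fixed squarefree \(m\le x^{1/2-\eta}\), we need
\[
\sum_{\substack{x/(2m)<p\le x/m\\ p\nmid 2a}}|\Kl(a\bar p^2;m)|^\nu\,|\Kl(a\bar m^2;p)|^\nu .
\]
A pointwise lower bound fails, since Kloosterman sums can vanish. We therefore use a positivity trick. By H\"older, or simply by the bound \eqref{eq:Kl-Weil} with \(|\Kl|\le 2^{\omega}\), it suffices to bound from below the quantity obtained by inserting the weight \(|\Kl(\cdot;p)|^\nu\ge c\,\mathbf 1_{|\Kl(\cdot;p)|\ge c'}\). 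By Sato--Tate equidistribution (Katz), uniform in the twist \(a\bar m^2\) with error \(O(p^{-1/2}\log)\) after sums over \(p\), the set of \(p\) with \(|\Kl(a\bar m^2;p)|\ge c'\) has positive relative density. This holds even in arithmetic progressions \(p\equiv b\pmod m\), by Bombieri--Vinogradov-type averaging of Fouvry--Michel type over \(m\le x^{1/2-\eta}\). This step is the main obstacle: combining the prime-modulus equidistribution with the progression structure uniformly in \(a\le x^A\). If needed, we first try to drop the progression and use \(\sum_{b\bmod m}^*|\Kl(ab^{2};m)|^\nu\), which by multiplicativity equals \(\varphi(m)\prod_{p\mid m}\mu_p\), where \(\mu_p=\frac1{p-1}\sum_u^*|\Kl(u;p)|^\nu=\sn(\nu)+O(p^{-1/2})\) by Katz. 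The residues \(ab^2\) cover squares only, but the Sato--Tate moment over squares still equals \(\sn(\nu)+O(p^{-1/2})\).

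\textbf{Step 2 (summing over \(m\)).} This yields a lower bound
\[
\gg \frac{x}{\log x}\sum_{\substack{m\le x^{1/2-\eta}\\ \mu^2(m)=1,\ (m,2a)=1}}\frac{z^{\omega(m)}\prod_{p\mid m}(\sn(\nu)+O(p^{-1/2}))}{m},
\]
with the factor \(z\) from \(p\) absorbed into the constant. Since \(z\sn(\nu)+O(p^{-1/2})\) is positive for large \(p\), we discard the finitely many small primes at the cost of a constant. Standard mean-value estimates for nonnegative multiplicative functions (Hall--Tenenbaum lower bound, or Rankin plus Selberg--Delange) then give
\[
\gg (\log x)^{z\sn(\nu)}\prod_{p\mid 2a}\Bigl(1+\frac{z\sn(\nu)}p\Bigr)^{-1}.
\]
This yields the stated bound \(x(\log x)^{z\sn(\nu)-1}\prod_{p\mid a}(1+z\sn(\nu)/p)^{-1}\).

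\textbf{Uniformity in \(a\).} The primes \(p\mid a\) number at most \(O(A\log x)\), and only the Euler factor records them. The remaining error terms depend on \(a\) only through \(\log|a|\ll A\log x\), which is absorbed by the saving \(x^{-\eta}\).
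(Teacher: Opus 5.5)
\textbf{Gap in Step 1.} Step 1 fails at the large prime $p>x^{1/2}$.

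For fixed $m$, the parameter $a\bar m^2 \bmod p$ takes only one value for each prime $p$. So the claim that $|\Kl(a\bar m^2;p)|\ge c'$ for a positive proportion of $p$ is a \emph{horizontal} equidistribution statement. For $a=m=1$ it says that $|\Kl(1;p)|\ge c'$ for a positive proportion of primes, which is not known. Katz's theorem, and its square-twisted form \eqref{eq:a-ST-discrepancy}, are \emph{vertical}: $p$ is fixed and the parameter runs over all of $\F_p^\times$.

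Bombieri--Vinogradov does not help here. It controls $p \bmod m$, which is what the $m$-factor needs, not $m \bmod p$, which is what the $p$-factor needs.

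Swapping the order of summation does not help either. For $p>x^{1/2}$ the cofactor satisfies $m\le x/p<p$, so the $m$'s cannot be equidistributed modulo $p$. Quantitatively, take Lemma~\ref{lem:lower-large-sieve} with $N\asymp x/P$ and bounded weights. The error term is of order $Px^{1/2}$ up to powers of $\log x$, against a main term $\asymp x/\log x$. It is therefore useful only for $P\le x^{1/2-\eta}$.

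There is a further problem. What you actually need is the distribution of the $p$-angle \emph{with respect to the weight} $z^{\omega(m)}|\Kl(a\bar p^2;m)|^\nu$, and that weight itself depends on $p$. Your plan treats the two factors separately and never addresses this joint distribution. Since the $p$-factor enters multiplicatively and may be small, it cannot be left uncontrolled.

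\textbf{How the paper avoids this.} The paper never lets a prime whose angle it controls exceed $x^{9/20}$.

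It writes $n=dp_1p_2p_3$ with $d\le x^{1/30}$, $P^+(d)\le x^{1/(30C)}$, $\omega(d)\le C\log\log x$ and $(d,2a)=1$. The primes $p_1,p_2,p_3$ lie in disjoint subranges of $[x^{1/4},x^{9/20}]$.

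For the mass, the paper uses your fallback idea. Bombieri--Vinogradov for $p_1$ in reduced classes modulo $d$, together with \eqref{eq:lower-small-mean}, gives total weight $\gg x(\log x)^{z\sn(\nu)-1}\prod_{p\mid 2a}(1+z\sn(\nu)/p)^{-1}$.

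For the angles, the paper decouples the weight in three moves.
\begin{enumerate}
\item It fixes the class $b$ of $p_j$ modulo $d$. The weight then becomes a function $f(m)=W_d(bm/d)$ of the cofactor alone.
\item It applies the Fouvry--Michel large sieve, Lemma~\ref{lem:lower-large-sieve}. The truncation $\omega(d)\le C\log\log x$ gives $W_d\le(\log x)^{B_0}$, which controls $\sum|f|^2$.
\item This shows that each $\theta_j$ is Sato--Tate distributed with respect to the full weight. Since $3\mu_{\rm ST}(I_0)<1$, a union bound leaves a positive proportion of the weight with $|2\cos\theta_j|\ge 1/2$ for all three $j$.
\end{enumerate}

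\textbf{What is missing.} You need to:
\begin{enumerate}
\item place every prime whose Kloosterman angle must be bounded below $x^{1/2-\eta}$, which means using several such primes rather than one prime above $\sqrt{x}$;
\item decouple the weight by fixing residue classes modulo the small cofactor;
\item replace the unavailable horizontal equidistribution by vertical equidistribution combined with the large sieve.
\end{enumerate}
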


\section{Applications}\label{sec:applications}

\subsection{Moments of Kloosterman sums}

We first record a consequence of Corollary~\ref{cor:normalized}
and Theorem~\ref{thm:lower} in which the dependence on \(a\)
is explicit.

\begin{corollary}\label{cor:moment-order}
Let \(z>0\), \(\nu\geq1\), and \(A>0\) be fixed. Uniformly for
\(x\geq3\) and nonzero integers \(a\) with \(|a|\leq x^A\),
\[
 \prod_{p\mid a}\left(1+\frac{z\sn(\nu)}p\right)^{-1}
 \ll \frac{1}{x(\log x)^{z\sn(\nu)-1}}
 \sum_{n\leq x}z^{\omega(n)}|\Kl(a;n)|^\nu
 \ll 1,
\]
where the first implied constant depends only on \(A,z,\nu\), and
the second only on \(z,\nu\). The upper bound holds for every
nonzero integer \(a\), without the restriction \(|a|\leq x^A\).
\end{corollary}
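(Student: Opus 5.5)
For the lower bound, I restrict the sum to \(x/2<n\le x\) with \(n\) squarefree and \((n,2a)=1\), since all terms are nonnegative, and apply Theorem~\ref{thm:lower}. This works for large \(x\) in terms of \(A,z,\nu\). For the bounded range \(3\le x\le x_0(A,z,\nu)\), the term \(n=1\) contributes \(1\) because \(\Kl(a;1)=1\). The normalizing factor \(x(\log x)^{z\sn(\nu)-1}\) is bounded above and below there, and the product over \(p\mid a\) is at most \(1\). So the lower bound holds trivially after enlarging the constant.

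For the upper bound I use Corollary~\ref{cor:normalized} with \(k=1\), \(Q_1(X)=X\), \(z_1=z\), \(\nu_1=\nu\). The polynomial \(X\) is irreducible and primitive, with \(\|Q\|=1\). Corollary~\ref{cor:normalized} gives the bound uniformly in \(a\ne0\) for short intervals \((x',x'+y]\) with \(x'\ge c_0\), with a constant depending only on \(z,\nu\), since \(Q\) is now fixed. Take \(y=x'\) with \(\alpha=1/2\), say, and cover \((c_0,x]\) by dyadic intervals \((x/2^{i+1},x/2^i]\). This gives
\[
\sum_{n\le x} z^{\omega(n)}|\Kl(a;n)|^\nu\ll O_{z,\nu}(1)+\sum_{i\ge0}\frac{x}{2^{i}}\Bigl(\log\frac{x}{2^{i}}\Bigr)^{z\sn(\nu)-1},
\]
where the \(O(1)\) comes from \(n\le c_0\) via the Weil bound \eqref{eq:Kl-Weil}.

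If \(z\sn(\nu)-1\ge0\), each log factor is at most \((\log x)^{z\sn(\nu)-1}\) and the geometric series sums to \(\ll x(\log x)^{z\sn(\nu)-1}\). If the exponent is negative, I split at \(2^i\le\sqrt x\). For those indices \(\log(x/2^i)\ge\tfrac12\log x\), so their contribution is again \(\ll x(\log x)^{z\sn(\nu)-1}\). The remaining indices contribute \(\ll\sqrt x\), which is negligible. The \(O(1)\) term is absorbed because \(x(\log x)^{z\sn(\nu)-1}\gg1\) for \(x\ge3\). None of this uses \(|a|\le x^A\), so the upper bound holds for every nonzero \(a\).

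The only delicate point is confirming that the implied constant in Corollary~\ref{cor:normalized} is genuinely uniform in \(a\), with dependence only on \(\bnu,\mathbf z,Q_1\), as stated. The argument relies on that uniformity. I do not see where the hypothesis \(\nu\ge1\) is used, beyond matching the paper's statement.
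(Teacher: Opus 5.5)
Your proposal is correct and follows essentially the paper's proof: the lower bound comes from Theorem~\ref{thm:lower} by positivity, and the upper bound comes from Corollary~\ref{cor:normalized} with \(k=1\), \(Q_1(X)=X\), followed by a dyadic decomposition. You also fill in details the paper leaves implicit, namely the bounded range of \(x\) handled by the \(n=1\) term and the case \(z\sn(\nu)<1\) in the dyadic sum, and you are right that \(\nu\geq1\) plays no role in these steps.
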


For fixed \(a\ne0\), \(z>0\), and \(\nu\geq1\), the product
in Corollary~\ref{cor:moment-order} is a positive constant, so
\begin{equation}\label{eq:cor41-fixeda}
	\sum_{n\leq x}z^{\omega(n)}|\Kl(a;n)|^\nu
	\asymp x(\log x)^{z\sn(\nu)-1}.
\end{equation}
For \(\nu=1\), the choices \(z=1/2\) and \(z=1\) correspond
to the two sums studied by Fouvry and Michel. Since
\(\sn(1)=8/(3\pi)\), \eqref{eq:cor41-fixeda} yields
\begin{equation}\label{eq:FM-moments}
 \sum_{n\leq x}\frac{|\Kl(a;n)|}{2^{\omega(n)}}
 \asymp \frac{x}{(\log x)^{1-\frac{4}{3\pi}}},
 \qquad
 \sum_{n\leq x}|\Kl(a;n)|
 \asymp \frac{x}{(\log x)^{1-\frac{8}{3\pi}}}.
\end{equation}

Let us review the previous results for \(a=1\). Fouvry and Michel
\cite[Theorems~1.1 and~1.2]{FM-2003}, as corrected in
\cite{FM-2006-corrigendum}, proved that
\[
 \sum_{n\leq x}\frac{|\Kl(1;n)|}{2^{\omega(n)}}
 \ll x\bigg(\frac{\log\log x}{\log x}\bigg)^{1-\frac{4}{3\pi}},\quad
 \sum_{n\leq x}|\Kl(1;n)|
 \ll\frac{x(\log\log x)^{2-\frac{8}{3\pi}}}
              {(\log x)^{1-\frac{8}{3\pi}}}.
\]
Their Theorem~1.3 shows that there is an absolute constant
\(\delta>5/12\) such that 
\[
 \frac{x}{\log x}\exp\bigl((\log\log x)^\delta\bigr)
 \ll\sum_{n\leq x}\frac{|\Kl(1;n)|}{2^{\omega(n)}}
 \leq\sum_{n\leq x}|\Kl(1;n)|.
\]
The result in  \eqref{eq:FM-moments} supersedes  Fouvry and Michel's bounds with a precise order of magnitude.

Another interesting consequence of \eqref{eq:cor41-fixeda} is its spectral application  in Section~\ref{subsec:spectral} below. For this purpose, we use the second moment. Since \(\sn(2)=1\), the case \(z=1\) of
\eqref{eq:cor41-fixeda} reduces to
\begin{equation}\label{eq:cor41-fixeda-2}
	\sum_{n\leq x}|\Kl(a;n)|^2\asymp x,
\end{equation}
for each fixed \(a\ne0\).
For \(a=1\), the earlier estimates of Fouvry and Michel
\cite{FM-2003}, recorded in \cite[Theorem~6.3]{Michel-2006}, are
\[
 \frac{x}{\log x}\exp\!\left((\log\log x)^{\frac{5}{17}}\right)
 \ll \sum_{n\leq x}|\Kl(1;n)|^2
 \ll x(\log\log x)^3.
\]
The order \(x\) determined in \eqref{eq:cor41-fixeda-2} matches the  prediction by the probabilistic model in \cite[p.~234]{Michel-2006}.

\subsection{Large values of Kloosterman sums}

The matching upper and lower bounds on the moments determine the logarithmic frequency of
large Kloosterman sums. For \(B\geq0\), define
\begin{equation*}\label{eq:Kl-rate-function}
 I(B):=\sup_{\nu>0}\{\nu B-\sn(\nu)+1\}.
\end{equation*}
This is the Legendre--Fenchel formula for the upper tail associated
with the logarithmic moment function \(\sn(\nu)-1\); see
\cite[Section~2.3]{DemboZeitouni-2010}. By
\eqref{eq:sato-moment}, \(\sn\) is strictly convex, with
\(\sn'(0)=-1/2\) and \(\sn'(\nu)\to\infty\) as \(\nu\to\infty\).
The supremum is therefore attained at the unique \(\nu_B>0\)
satisfying \(\sn'(\nu_B)=B\), and
\[
 I(B)=1+B\nu_B-\sn(\nu_B),\qquad I'(B)=\nu_B>0.
\]

For \(x\geq3\), let \(\mathbb P_x\) be the uniform probability
measure on the integers \(n\in(x/2,x]\).
For \(a\ne0\) and \(B\geq0\), set
\[
 \mathcal L_{a,B}(x)
 :=\{n\in(x/2,x]:|\Kl(a;n)|\geq(\log x)^B\}.
\]

\begin{corollary}\label{cor:Kl-large-values}
Let \(a\ne0\) and \(B\geq0\) be fixed. Then
\[
 \lim_{x\to\infty}
 \frac{\log\mathbb P_x\!\left(\mathcal L_{a,B}(x)\right)}{\log\log x}
 =-I(B).
\]
\end{corollary}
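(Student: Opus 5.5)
The plan is a Chernoff-type argument: an exponential Markov bound for the upper estimate, and a tilted-measure (Cramér-type) argument built on the lower bound of Theorem~\ref{thm:lower} for the lower estimate. The only inputs are Corollary~\ref{cor:normalized} with $k=1$ and $Q_1(X)=X$, which gives $\sum_{n\le x} z^{\omega(n)}|\Kl(a;n)|^\nu\ll x(\log x)^{z\sn(\nu)-1}$, and Theorem~\ref{thm:lower}. Both use only $z=1$, and for fixed $a$ the product over $p\mid a$ is a constant.

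\emph{Upper bound.} Fix $\nu>0$. By Markov's inequality,
\[
\#\mathcal L_{a,B}(x)\le (\log x)^{-\nu B}\sum_{x/2<n\le x}|\Kl(a;n)|^\nu\ll x(\log x)^{\sn(\nu)-1-\nu B}.
\]
Hence $\limsup \log\mathbb P_x(\mathcal L_{a,B}(x))/\log\log x\le -(\nu B-\sn(\nu)+1)$ for every $\nu>0$. Taking the supremum over $\nu$ gives $\le -I(B)$. For $B=0$ this is harmless, since $I(0)=0$: the supremum over $\nu>0$ tends to $0$ because $\sn'(0)<0$.

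\emph{Lower bound.} Take $\nu=\nu_B$ (for $B=0$ the bound $\mathbb P_x\le 1$ together with a limiting argument in $B$ should suffice, or one treats it separately). Write $w(n)=|\Kl(a;n)|^{\nu}$. Theorem~\ref{thm:lower} gives $\sum_{x/2<n\le x}w(n)\gg x(\log x)^{\sn(\nu)-1}$. Split the sum into three ranges.
\begin{itemize}
\item[(i)] $|\Kl|<(\log x)^{B-\eta}$. By Markov with a slightly smaller exponent $\nu-\tau$, this part is at most
\[
(\log x)^{(B-\eta)\tau}\sum |\Kl|^{\nu-\tau}\ll x(\log x)^{(B-\eta)\tau+\sn(\nu-\tau)-1}.
\]
Since $\sn(\nu-\tau)=\sn(\nu)-\tau B+O(\tau^2)$, the exponent equals $\sn(\nu)-1-\eta\tau+O(\tau^2)$. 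Choosing $\tau$ small in terms of $\eta$ makes this part negligible.
\item[(ii)] $|\Kl|>(\log x)^{B+\eta}$. Symmetrically, use the exponent $\nu+\tau$: the part is at most $(\log x)^{-(B+\eta)\tau}\sum|\Kl|^{\nu+\tau}$, again with a power saving.
\item[(iii)] The remaining $n$ lie in $\mathcal L_{a,B-\eta}(x)$. Using $w(n)\le(\log x)^{(B+\eta)\nu}$ there,
\[
\#\mathcal L_{a,B-\eta}(x)\gg x(\log x)^{\sn(\nu)-1-(B+\eta)\nu}.
\]
\end{itemize}
This gives $\liminf\ge -I(B)-2\eta\nu_{B}$ with $B$ replaced by $B-\eta$. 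Reparametrize by applying the argument at $B'=B+\eta$ to bound $\mathcal L_{a,B}$ from below, then let $\eta\to0$ and use continuity of $I$ and of $\nu_B$.

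The main obstacle is step (i)/(ii): the bounds must hold with the \emph{same} constants as $x\to\infty$ for the perturbed exponents $\nu\pm\tau$. Corollary~\ref{cor:normalized} gives this, since $\nu\pm\tau$ are fixed. The key point is that differentiability of $\sn$ at $\nu_B$, with $\sn'(\nu_B)=B$, yields a strictly positive saving $\eta\tau-O(\tau^2)$.
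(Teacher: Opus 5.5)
Your argument is correct and is essentially the paper's proof. The upper bound is Markov's inequality at the optimal exponent. For the lower bound, you apply Theorem~\ref{thm:lower} at an exponent slightly above $\nu_B$ (your $\nu_{B+\eta}$, the paper's $\nu_B+\tau$), and use Markov at the perturbed exponents $\nu\pm\tau$ to discard the $n$ whose Kloosterman value lies outside a window just above $(\log x)^B$. The paper extracts the saving from strict convexity of $\sn$ rather than from a Taylor expansion, but that difference is immaterial.

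One side remark is false: $I(0)\neq 0$. Since $\sn(0)=1$ and $\sn'(0)=-1/2<0$, we have $1-\sn(\nu)>0$ for small $\nu>0$. Hence
\[
I(0)=\sup_{\nu>0}\bigl(1-\sn(\nu)\bigr)\geq 1-\sn(1)=1-\frac{8}{3\pi}>0 .
\]
So the event $|\Kl(a;n)|\geq 1$ is already a genuine large deviation, and your reasoning ``the supremum tends to $0$ because $\sn'(0)<0$'' has the sign logic backwards.

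Nothing in your proof actually depends on this claim:
\begin{itemize}
\item The Markov argument for the upper bound needs no special case at $B=0$.
\item $\nu_0$ exists (it is the minimiser of $\sn$).
\item Your reparametrised lower-bound argument at $B'=B+\eta>0$ covers $B=0$ verbatim, as does the inclusion $\mathcal L_{a,B}(x)\subseteq\mathcal L_{a,0}(x)$ combined with continuity of $I$.
\end{itemize}
You should simply delete the parenthetical appeal to $\mathbb P_x\leq 1$ and the claim $I(0)=0$.
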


Varying \(z\) in the weighted moments
\eqref{eq:cor41-fixeda} determines the typical number of prime factors under the same large-value condition.

\begin{corollary}\label{cor:Kl-large-value-concentration}
Let \(a\ne0\) and \(B\geq0\) be fixed. For every fixed
\(0<\delta<\sn(\nu_B)\), as \(x\to\infty\),
\[
 \mathbb P_x\!\left(
 \left.
 \left|\frac{\omega(n)}{\log\log x}-\sn(\nu_B)\right|
 \geq\delta\ \right|\ \mathcal L_{a,B}(x)\right)
 \ll_{a,B,\delta} (\log x)^{-\frac{\delta^2}{8\sn(\nu_B)}}.
\]
Thus \(\omega(n)/\log\log x\) converges in probability to
\(\sn(\nu_B)\) under the conditional law
\(\mathbb P_x(\,\cdot\mid\mathcal L_{a,B}(x))\).
\end{corollary}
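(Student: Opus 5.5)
We want to prove Corollary~\ref{cor:Kl-large-value-concentration}. The approach is exponential tilting in $z$. We compare the weighted moments \eqref{eq:cor41-fixeda} with $z=e^{\pm t}$ against the lower bound for $\mathbb P_x(\mathcal L_{a,B}(x))$ supplied by Corollary~\ref{cor:Kl-large-values}. Write $L=\log\log x$, $s=\sn(\nu_B)$, and $\nu=\nu_B$; note $\nu\geq1$ is not guaranteed, but $\sn'(\nu)=B\geq0>\sn'(0)$, and the upper bounds of Corollary~\ref{cor:normalized} hold for every $\nu>0$.

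\emph{Step 1 (denominator).} We need a lower bound for $\#\mathcal L_{a,B}(x)$ sharper than the $(\log x)^{-I(B)+o(1)}$ given by Corollary~\ref{cor:Kl-large-values}. The $o(1)$ loss would destroy a power saving, so I would re-run the lower-bound argument directly. Theorem~\ref{thm:lower} with $z=1$, $\nu$, together with the upper bound at a slightly larger exponent $\nu'$, controls the part of the moment coming from $|\Kl|<(\log x)^B$ and from $|\Kl|$ much larger. Concretely, by H\"older/Markov splitting we get
\[
\#\mathcal L_{a,B}(x)\geq x(\log x)^{-I(B)-\eta}
\]
for any fixed small $\eta>0$, and I will keep $\eta$ as a parameter.

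\emph{Step 2 (numerator, upper tail).} For $t>0$,
\[
\#\{n\in\mathcal L_{a,B}(x):\omega(n)\geq(s+\delta)L\}\leq (\log x)^{-B\nu-t(s+\delta)}\sum_{n\leq x}e^{t\omega(n)}|\Kl(a;n)|^{\nu},
\]
using $|\Kl|^\nu(\log x)^{-B\nu}\geq1$ on $\mathcal L_{a,B}$ and $e^{t\omega(n)}\geq(\log x)^{t(s+\delta)}$. By Corollary~\ref{cor:normalized} with $z=e^t$, the right side is
\[
\ll x(\log x)^{e^t s-1-B\nu-t(s+\delta)}.
\]
Dividing by Step 1 and using $I(B)=1+B\nu-s$ gives exponent
\[
s(e^t-1-t)-t\delta+\eta .
\]
Choosing $t=\log(1+\delta/s)$ makes $s(e^t-1-t)-t\delta=-\,[(s+\delta)\log(1+\delta/s)-\delta]\leq-\delta^2/(2s+\delta)\cdot\frac{1}{1}$-type. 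Elementarily, this is at most $-\delta^2/(3s)$ for $\delta<s$. The lower tail is handled the same way with $z=e^{-t}$, $t=-\log(1-\delta/s)$, giving $-[(s-\delta)\log(1-\delta/s)+\delta]\leq-\delta^2/(2s)$.

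\emph{Step 3.} Taking $\eta=\delta^2/(24 s)$, both exponents are at most $-\delta^2/(8s)$, which gives the stated bound. Convergence in probability is immediate.

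The main obstacle is Step 1: obtaining the denominator with only an arbitrarily small power loss. My plan there is the standard argument. With $M=\sum z|\Kl|^\nu$ at $z=1$, Theorem~\ref{thm:lower} gives $M\gg x(\log x)^{s-1}$. The part with $|\Kl|<(\log x)^{B-\eta'}$ is bounded by $(\log x)^{(B-\eta')(\nu-\nu_-)}\sum|\Kl|^{\nu_-}$. By strict convexity of $\sn$ this is $o(M)$ by a power, and similarly for $|\Kl|>(\log x)^{B+\eta'}$ using $\nu_+>\nu$. The middle range is then at most $(\log x)^{(B+\eta')\nu}\#\mathcal L_{a,B-\eta'}$. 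Replacing $B$ by $B+\eta'$ and using continuity of $I$ then yields the claim.
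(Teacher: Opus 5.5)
Your proposal is correct and is essentially the paper's argument. Your optimized tilts $e^{\pm t}$ are exactly the paper's $z_\pm=1\pm\delta/\sn(\nu_B)$, and the numerator is controlled by the weighted upper bound at exponent $\nu_B$; you rightly invoke Corollary~\ref{cor:normalized}, since $\nu_B$ may be below $1$. The denominator is the bound \eqref{eq:lower-LaB}, which the paper obtains by the same moment-splitting you sketch. Your worry about the $o(1)$ in Corollary~\ref{cor:Kl-large-values} is unfounded: that limit already gives $\mathbb P_x(\mathcal L_{a,B}(x))\geq(\log x)^{-I(B)-\eta}$ for every fixed $\eta>0$ and large $x$, which is all Step~1 needs.
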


For example, since
\[
 \sn(2)=1,\qquad \sn'(2)=\frac{1}{4},\qquad
 I\left(\frac{1}{4}\right)=\frac{1}{2},
\]
we have \(\nu_{1/4}=2\) and
\(\mathbb P_x(\mathcal L_{a,1/4}(x))=(\log x)^{-1/2+o(1)}\).
Corollary~\ref{cor:Kl-large-value-concentration} gives
\[
 \frac{\omega(n)}{\log\log x}\longrightarrow 1
\]
in probability under the conditional law
\(\mathbb P_x(\,\cdot\mid\mathcal L_{a,1/4}(x))\). Thus the typical number of prime factors remains \(\log\log x\), even under this large-value condition.

The upper-tail estimate has the following consequence for classical
Kloosterman sums. Sarnak \cite[p.~33]{Sarnak-1990} noted that, as pointed out by Serre, it was not known whether
\(S(a,b;n)=O(n^{1/2-\varepsilon})\) for some
\(\varepsilon>0\) could hold for fixed nonzero integers \(a,b\) as
\(n\to\infty\).
Fouvry and Michel \cite[Corollary~1.4]{FM-2003} excluded such a bound
for \(a=b=1\) with the estimate
\[
 S(1,1;n)=\Omega\!\left(
 \sqrt n\,\frac{\exp\bigl((\log\log n)^{\frac{5}{12}}\bigr)}
 {\log n}\right).
\]
We have the following stronger consequence.

\begin{corollary}\label{cor:Kl-omega}
Let \(a,b\) be fixed nonzero integers. For every fixed \(B>0\),
\[
 S(a,b;n)=\Omega\bigl(\sqrt n\,(\log n)^B\bigr).
\]
\end{corollary}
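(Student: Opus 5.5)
We first reduce to the case \(b=1\). Since \(S(a,b;n)=S(ab,1;n)\) whenever \((b,n)=1\) (substitute \(u\mapsto bu\)), it suffices to work with moduli \(n\) coprime to \(b\) and the integer \(c:=ab\ne0\). We then have \(|S(a,b;n)|=\sqrt n\,|\Kl(c;n)|\), so the claim becomes: for every fixed \(B>0\) there are infinitely many \(n\), with \((n,b)=1\), such that \(|\Kl(c;n)|\geq(\log n)^B\).

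To produce these \(n\) we use a lower bound for large values restricted to moduli coprime to \(b\). Corollary~\ref{cor:Kl-large-values} already gives \(\mathbb P_x(\mathcal L_{c,B'}(x))=(\log x)^{-I(B')+o(1)}\) for any fixed \(B'>B\). We need, however, that many of these \(n\) are coprime to \(b\). The cleanest route is to rerun the lower-bound half of that corollary with Theorem~\ref{thm:lower} applied to \(a'=cb\), where \(c\) is replaced by \(cb\) only to force the condition \((n,2cb)=1\). This does not work directly, because \(\Kl(cb;n)\) differs from \(\Kl(c;n)\).

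Instead we apply Theorem~\ref{thm:lower} with \(a=c\), and handle the non-coprime moduli separately. Those \(n\) sharing a prime factor with \(b\) contribute at most \(\sum_{p\mid b}\sum_{n\le x,\,p\mid n}|\Kl(c;n)|^\nu\) to the moment. We bound this using the upper bound of Corollary~\ref{cor:normalized} with \(Q_1(X)=pX\) together with a factor from the twisted multiplicativity \eqref{eq:Kl-mult} and \eqref{eq:Kl-Weil}. The result is of size \(O(x(\log x)^{\sn(\nu)-1}/p)\) up to constants. Since \(b\) is fixed, this is not negligible compared to the main term.

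The better option is to apply the lower-bound machinery to moduli \(n\) with \((n,2c)=1\), squarefree, and additionally \((n,b)=1\). Because \(b\mid c\), the condition \((n,2c)=1\) already implies \((n,b)=1\). So Theorem~\ref{thm:lower} with \(a=c\) automatically restricts to moduli coprime to \(b\).

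The large-value lower bound then proceeds as follows. Choose \(\nu\) with \(\sn'(\nu)>B\); we may take \(z=1\). Theorem~\ref{thm:lower} gives
\[
\sum_{\substack{x/2<n\le x\\ n \text{ squarefree},\ (n,2c)=1}}|\Kl(c;n)|^\nu\gg x(\log x)^{\sn(\nu)-1}.
\]
Split this sum according to whether \(|\Kl(c;n)|<(\log x)^B\) or not.

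For the small values we need to show they cannot account for the whole moment. By Hölder's inequality together with the upper bound of Corollary~\ref{cor:normalized} applied with exponent \(\nu-\eta\), the small values contribute at most
\[
(\log x)^{B\eta}\, x(\log x)^{\sn(\nu-\eta)-1}.
\]
Convexity gives \(\sn(\nu-\eta)+B\eta<\sn(\nu)\) for small \(\eta>0\), because \(\sn'(\nu)>B\). So the small values are of strictly lower order.

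Hence the large values contribute \(\gg x(\log x)^{\sn(\nu)-1}\), and in particular the set of large values is nonempty for all large \(x\). Since \(n>x/2\), we have \((\log x)^B\geq(\log n)^B\). Letting \(x\to\infty\) yields infinitely many \(n\) coprime to \(b\) with \(|S(a,b;n)|\geq\sqrt n(\log n)^B\).

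The only delicate point is the small-value comparison step. It relies on the uniform upper bound of Corollary~\ref{cor:normalized} at a nearby exponent and on the strict convexity of \(\sn\), both of which are available.
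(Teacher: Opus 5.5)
Your argument is correct and is essentially the paper's proof: it passes to $\Kl(ab;n)$ on squarefree moduli coprime to $2ab$ and shows the large values are nonempty. It does so by applying Theorem~\ref{thm:lower} and discarding the small-value part via the upper bound at exponent $\nu-\eta$, exactly as in the derivation of \eqref{eq:large-value-mass}. A few small fixes are needed. The small-value estimate is the pointwise bound $|\Kl|^\nu\le(\log x)^{B\eta}|\Kl|^{\nu-\eta}$, not H\"older. The inequality $\sn(\nu)-\sn(\nu-\eta)>B\eta$ comes from differentiability of $\sn$ at $\nu$, not convexity. The inequality $(\log n)^B\le(\log x)^B$ holds because $n\le x$, not because $n>x/2$. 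Finally, the abandoned detours in the middle paragraphs should be deleted.
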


\subsection{Hecke eigenvalues and Kloosterman sums}

Let \(f\) be a holomorphic primitive form\footnote{Here, a primitive form is a  normalized cuspidal Hecke eigenform.} of weight
\(\kappa\) for \(\mathrm{SL}_2(\mathbb Z)\), with Fourier expansion
\[
 f(\tau)=\sum_{n\geq1}\lambda_f(n)n^{\frac{\kappa-1}{2}}\e(n\tau),
 \qquad \lambda_f(1)=1.
\]
The coefficients \(\lambda_f(n)\) are real. By the Sato--Tate theorem
\cite{BLGHT-2011} and partial summation (see also \cite[Lemma 5.2]{Mangerel-2023}), for any fixed \(\mu>0\),
\begin{equation}\label{eq:Hecke-ST-prime}
 \sum_{p\leq x}\frac{|\lambda_f(p)|^\mu}{p}
 =\bigl(\sn(\mu)+o(1)\bigr)\log\log x.
\end{equation}
Applying Corollary~\ref{cor:Henriot-cor2} with \(Q(X)=X\) and
\(F(n)=|\lambda_f(n)|^\mu\) gives the following estimate.

\begin{corollary}\label{cor:Hecke-Kl-mixed}
Let \(f\) be a holomorphic primitive form as above, and let \(\mu,\nu>0\), \(0<\alpha<1\), and
\(\varepsilon>0\) be fixed. Uniformly for \(a\in\Z\setminus\{0\}\) and
\(x^\alpha<y\leq x\),
\[
 \sum_{x<n\leq x+y}
 |\lambda_f(n)|^\mu|\Kl(a;n)|^\nu
 \ll
 y(\log x)^{\sn(\mu)\sn(\nu)-1+\varepsilon},
\]
where the implied constant depends only on
\(f,\mu,\nu,\alpha,\varepsilon\).
\end{corollary}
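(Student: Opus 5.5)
We apply Corollary~\ref{cor:Henriot-cor2} with \(k=1\), \(Q(X)=X\) and \(F(n)=|\lambda_f(n)|^\mu\).

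\textbf{Step 1: \(F\) lies in a suitable class.} Since \(\lambda_f\) is multiplicative, for \((a_1,b_1)=1\) we have \(F(a_1b_1)=F(a_1)F(b_1)\). So it suffices to bound \(F(a_1)\). Deligne's bound gives \(|\lambda_f(p^j)|\le j+1\), hence
\[
F(a_1)\le \tau(a_1)^\mu\le \min\bigl(2^{\mu\Omega(a_1)},\,B_\eta a_1^{\eta}\bigr)
\]
for any \(\eta>0\). Taking \(A=2^\mu\) and \(\eta\) as small as Theorem~\ref{thm:main} requires for the given \(\alpha\) (with \(g=1\); here \(\|Q\|=1\), so any \(\delta\) works), we get \(F\in\mathcal M_1(A,B,\varepsilon')\). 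Because \(F\) is multiplicative, \(G_F(p)=\sup_b F(pb)/F(b)=|\lambda_f(p)|^\mu\). In this setting \(\rho_Q(p)=1\), \(\lc(Q)=1\) and \(D=1\), so \(S=2\). Both \(\Delta_S\) and the factor \(1-\rho(p)/p\) at \(p\le g=1\) are therefore absolute constants.

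\textbf{Step 2: Euler product.} Corollary~\ref{cor:Henriot-cor2} bounds the sum by
\[
y\prod_{p\le x}\Bigl(1-\frac1p\Bigr)\prod_{2<p\le x}\Bigl(1+\frac{|\lambda_f(p)|^\mu\lambda_{a,\nu}(p)}{p}\Bigr)
\ll \frac{y}{\log x}\exp\Bigl(\sum_{p\le x}\frac{|\lambda_f(p)|^\mu\lambda_{a,\nu}(p)}p\Bigr).
\]
Here \(\lambda_{a,\nu}(p)\) is the local Kloosterman moment \(H_{a,\nu}(p)\cdot p\). From \eqref{def-H} with \(Q=X\), it equals the average over \(r\) coprime to \(p\) of \(|\Kl(a\bar r^2;p)|^\nu\), rescaled. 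I expect this to be \(\sn(\nu)+O(p^{-1/2})\) for \(p\nmid a\), by Katz's equidistribution of Kloosterman angles, and \(O(1)\) for \(p\mid a\).

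\textbf{Step 3: decoupling (the main obstacle).} We need
\[
\sum_{p\le x}\frac{|\lambda_f(p)|^\mu\lambda_{a,\nu}(p)}{p}\le(\sn(\mu)\sn(\nu)+\varepsilon)\log\log x+O(1),
\]
uniformly in \(a\).

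\emph{Primes \(p\nmid a\).} I would use the quantitative form \(\lambda_{a,\nu}(p)=\sn(\nu)+O_\nu(p^{-1/2})\). If only an \(o(1)\) error is available, I would use \(\lambda_{a,\nu}(p)\le \sn(\nu)+\eta\) for \(p>P_0(\eta)\); the primes below \(P_0\) contribute \(O(1)\), since \(|\lambda_f(p)|\le2\) and \(\lambda_{a,\nu}\) is bounded. The error term is summable. The main term is handled by \eqref{eq:Hecke-ST-prime}, which gives \((\sn(\mu)\sn(\nu)+o(1))\log\log x\). The \(o(1)\) is absorbed into \(\varepsilon\) for \(x\) large and into the constant otherwise.

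\emph{Primes \(p\mid a\).} At most \(\omega(a)\) primes contribute \(O(1/p)\) each. Uniformity in \(a\) then needs care. If \(\lambda_{a,\nu}(p)\) for \(p\mid a\) is bounded by \(C\), then \(\sum_{p\mid a,\,p\le x}1/p\) can be as large as \(\log\log\log x\), which is \(\le\varepsilon\log\log x+O_\varepsilon(1)\). This is absorbed into the \(\varepsilon\) in the exponent, and that is exactly why the statement contains \(+\varepsilon\).

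\textbf{Step 4: conclusion.} Combining these, the sum is
\[
\ll y(\log x)^{-1}(\log x)^{\sn(\mu)\sn(\nu)+2\varepsilon},
\]
and renaming \(\varepsilon\) gives the claim. The implied constants depend only on \(f,\mu,\nu,\alpha,\varepsilon\).
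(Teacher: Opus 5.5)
Your route is the paper's route: Corollary~\ref{cor:Henriot-cor2} with $Q(X)=X$, $F=|\lambda_f|^\mu$ and $G_F(p)=|\lambda_f(p)|^\mu$, followed by \eqref{eq:Hecke-ST-prime}. Steps 1 and 2 are fine. One small correction: by \eqref{eq:H-good}, $pH_{a,\nu}(p)=(1-1/p)\lambda_{a,\nu}(p)$, not $\lambda_{a,\nu}(p)$. This does not matter, since the corollary already contains $\lambda_{a,\nu}(p)$.

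\textbf{The gap is in Step 3, at the primes $p\mid a$.}

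The statement is uniform over all $a\neq0$, with no restriction such as $|a|\le x^A$. So $\sum_{p\mid a,\,p\le x}1/p$ is not bounded by $\log\log\log x+O(1)$. For $a=\prod_{p\le x}p$ it equals $\log\log x+O(1)$.

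If you only know $\lambda_{a,\nu}(p)\le C$ for $p\mid a$, these primes can add up to $C\sum_{p\le x}|\lambda_f(p)|^\mu/p\approx C\,\sn(\mu)\log\log x$ to the exponent. The bound you actually have is $C=2^\nu$ from \eqref{eq:lambda-power}, and $2^\nu>\sn(\nu)$. So in that example the exponent becomes $2^\nu\sn(\mu)-1$ rather than $\sn(\mu)\sn(\nu)-1+\varepsilon$, and the uniformity in $a$ fails.

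\textbf{The missing fact.} If $p\mid a$, then $a\bar h^2\equiv0 \pmod p$, so $\Kl(a\bar h^2;p)=\Kl(0;p)=-p^{-1/2}$. Hence $\lambda_{a,\nu}(p)=p^{-\nu/2}$ exactly. The primes $p\mid a$ then contribute $\ll\sum_p p^{-1-\nu/2}\ll_\nu 1$, uniformly in $a$.

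The paper packages this as \eqref{eq:lambda-prime-upper}: $\lambda_{a,\nu}(p)\le\sn(\nu)+O_\nu(p^{-1/8})$ for every prime $p$ and every integer $a$. This bounds the whole Euler product by $\prod_{p\le x}\bigl(1+\sn(\nu)|\lambda_f(p)|^\mu/p+O(p^{-9/8})\bigr)$, with no split according to whether $p\mid a$.

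\textbf{Where the $+\varepsilon$ comes from.} Your explanation is misplaced. The $\varepsilon$ is needed only because \eqref{eq:Hecke-ST-prime} gives $(\sn(\mu)+o(1))\log\log x$ with an unspecified $o(1)$.

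\textbf{Primes $p\nmid 2a$.} You do not need to guess a rate $O(p^{-1/2})$. That rate is not justified for non-even $\nu$, because $|2\cos\theta|^\nu$ is not smooth. Instead use \eqref{eq:lambda-prime}, which gives $\sn(\nu)+O_\nu(p^{-1/8})$ uniformly in $a \bmod p$. This error is summable against $1/p$, and the uniformity in $a$ is exactly what makes your fallback threshold $P_0(\eta)$ independent of $a$.
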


Xi \cite[Conjecture~2.2]{Xi-2020} conjectured that, for every fixed
primitive cusp form \(f\), holomorphic or Maass,
\begin{equation*}\label{eq:Xi-integer-correlation}
 \sum_{n\leq x}\lambda_f(n)\Kl(1;n)
 \ll_f\frac{x}{(\log x)^{2018}}.
\end{equation*}
Taking \(\mu=\nu=1\) in Corollary~\ref{cor:Hecke-Kl-mixed}, the dyadic argument yields the following bound.

\begin{corollary}
Let \(f\) be a holomorphic primitive form as above, and let \(\varepsilon>0\) be fixed. Uniformly for \(a\in\Z\setminus\{0\}\),
\begin{equation*}\label{eq:Hecke-Kl-correlation}
 \sum_{n\leq x}|\lambda_f(n)\Kl(a;n)|
 \ll
 \frac{x}{(\log x)^{1-\frac{64}{9\pi^2}-\varepsilon}},
\end{equation*}
where the implied constant depends only on \(f\) and \(\varepsilon\).
\end{corollary}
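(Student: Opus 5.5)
The plan is to deduce the statement from Corollary~\ref{cor:Hecke-Kl-mixed} with \(\mu=\nu=1\), using a dyadic decomposition of \([1,x]\). Since \(\sn(1)=8/(3\pi)\), we have \(\sn(1)^2=64/(9\pi^2)\). Hence Corollary~\ref{cor:Hecke-Kl-mixed}, with a fixed \(\alpha\) (say \(\alpha=1/2\)), gives
\[
 \sum_{X<n\leq 2X}|\lambda_f(n)\Kl(a;n)|\ll X(\log X)^{\frac{64}{9\pi^2}-1+\varepsilon}.
\]
This holds uniformly in \(a\ne0\) for all \(X\) larger than an absolute threshold \(X_0\), which comes from the constant \(c_0\) of Theorem~\ref{thm:main}; since \(Q(X)=X\) is fixed, \(c_0\|Q\|^\delta\) is an absolute constant. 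Here we take \(y=X\), which is allowed because \(X^\alpha<X\leq X\).

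Next split \(\sum_{n\leq x}\) into the range \(n\leq\sqrt x\) and the dyadic blocks \((x/2^{i+1},x/2^i]\) with \(x/2^{i+1}\geq\sqrt x\), plus one leftover partial block. On the small range we use the trivial bound \eqref{eq:Kl-Weil}, \(|\Kl(a;n)|\ll 2^{\omega(n)}\), together with Deligne's bound \(|\lambda_f(n)|\leq d(n)\). This gives a contribution \(\ll\sum_{n\leq\sqrt x}d(n)^2\ll\sqrt x(\log x)^3\), which is negligible, uniformly in \(a\).

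For each block with \(X=x/2^{i+1}\geq\sqrt x\) we have \(\log X\asymp\log x\). Since the exponent \(64/(9\pi^2)-1+\varepsilon\) is negative for small \(\varepsilon\), and in any case the factor \((\log X)^{\cdots}\) is bounded by a constant times \((\log x)^{\cdots}\), the \(i\)-th block contributes \(\ll (x/2^{i})(\log x)^{64/(9\pi^2)-1+\varepsilon}\). Summing the geometric series over \(i\) gives the claimed bound, with an implied constant depending only on \(f\) and \(\varepsilon\).

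The only point needing care is uniformity. The range condition \(x\geq c_0\|Q\|^\delta\) must be met for \(Q=X\) independently of \(a\), and the implied constant in Corollary~\ref{cor:Hecke-Kl-mixed} must not depend on \(a\). Both are guaranteed by the statement of that corollary. For \(x\) below the threshold, the sum is trivially bounded, so the estimate holds for all \(x\geq 2\).
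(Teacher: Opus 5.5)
Your argument is correct and follows the paper's route: set \(\mu=\nu=1\) in Corollary~\ref{cor:Hecke-Kl-mixed}, use \(\sn(1)^2=64/(9\pi^2)\), and sum over dyadic blocks, bounding \(n\le\sqrt x\) trivially (the paper compresses all of this into ``the dyadic argument''). One small correction: the threshold \(c_0\|Q\|^\delta=c_0\) for \(Q(X)=X\) is not absolute, because \(c_0\) depends on \(A,B\) and hence on \(f\); it is, however, independent of \(a\), which is all the uniformity claim requires.
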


Applying Theorem~\ref{thm:prime-variable} with
\(Q(X)=X+h\), \(F(n)=|\lambda_f(n)|\), and \(\nu=1\), and then using the Euler-product calculation in the proof of Corollary~\ref{cor:Hecke-Kl-mixed}, we get the following estimate.

\begin{corollary}\label{cor:Hecke-Kl-prime-shift}
Let \(f\) be a holomorphic primitive form as above, let \(h\ne0\) be a fixed integer, and let
\(\varepsilon>0\) be fixed. Uniformly for
\(a\in\Z\setminus\{0\}\),
\[
 \sum_{\substack{p\leq x\\p+h>0}}
 |\lambda_f(p+h)\Kl(a;p+h)|
 \ll
 \frac{x}{(\log x)^{2-\frac{64}{9\pi^2}-\varepsilon}},
\]
where the implied constant depends only on \(f,h,\varepsilon\).
\end{corollary}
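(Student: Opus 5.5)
We apply Theorem~\ref{thm:prime-variable} with \(k=1\), \(Q(X)=X+h\) (so \(g=1\), \(Q(0)=h\ne0\), \(\lc(Q)=1\), \(D=1\)), \(F(n)=|\lambda_f(n)|\) and \(\nu=1\), over dyadic blocks \(x<p\le 2x\) (so \(y=x\), \(\alpha\) any fixed number in \((0,1)\)). The first task is to check that \(F\in\mathcal M_1(A,B,\varepsilon)\) for the required small \(\varepsilon\). Multiplicativity of \(\lambda_f\) and Deligne's bound \(|\lambda_f(p^j)|\le j+1\) give \(F(mn)=F(m)F(n)\) for coprime \(m,n\) and \(F(m)\le \tau(m)\le 2^{\Omega(m)}\), so \(A=2\) works. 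Moreover \(\tau(m)\le B_\varepsilon m^\varepsilon\), so \(B=B_\varepsilon\) works for the \(\varepsilon\) allowed by \eqref{eq:prime-epsilon-range}. Then \(G_F(\mathbf p_1^1)=|\lambda_f(p)|\) exactly, by multiplicativity: the supremum in \eqref{eq:standard-majorant} is attained on any \(\mathbf b\) with \(F(\mathbf b)>0\). The set \(\mathcal S\) is \(\{2\}\cup\{p\mid h\}\), and \(\rho_Q(p)=1\) for all \(p\).

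The right side of \eqref{eq:prime-variable} then becomes
\[
 \frac{|h|}{\varphi(|h|)}\frac{x}{\log x}\,\Delta_S
 \prod_{1<p\le x}\Bigl(1-\frac1p\Bigr)
 \prod_{\substack{p\le x\\ p\nmid 2h}}\Bigl(1+\frac{|\lambda_f(p)|\lambda_{a,1}(p)}{p}\Bigr).
\]
By \eqref{eq:vartheta-bad}, \(\Delta_S\ll_h 1\), and Mertens gives \((\log x)^{-1}\) for the first product. For the last product I reuse the Euler-product calculation from the proof of Corollary~\ref{cor:Hecke-Kl-mixed}. There, \(\lambda_{a,1}(p)\) is the local average of \(|\Kl(a\bar c^2;p)|\) over \(c\), which equals \(\sn(1)+O(p^{-1/2})\) uniformly in \(a\) for \(p\nmid a\) by Katz's equidistribution. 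For \(p\mid a\) the local factor is at most \(O(1/p)\) in size, so those primes are harmless. Also \(\sum_{p\le x}|\lambda_f(p)|/p\le(\sn(1)+o(1))\log\log x\) by \eqref{eq:Hecke-ST-prime}. Together these give
\[
 \sum_{p\le x}\frac{|\lambda_f(p)|\lambda_{a,1}(p)}p\le \bigl(\sn(1)^2+\varepsilon\bigr)\log\log x+O_{f,\varepsilon}(1),
\]
uniformly in \(a\). Since \(\sn(1)^2=64/(9\pi^2)\), the last product is \(\ll(\log x)^{64/(9\pi^2)+\varepsilon}\).

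Combining, each dyadic block \((X,2X]\) contributes \(\ll X(\log X)^{-2+64/(9\pi^2)+\varepsilon}\). We need \(X\ge c_0\|Q\|^\delta=c_0(1+|h|)^\delta\), which holds for \(X\gg_h1\). Smaller primes contribute \(O_h(1)\) via \eqref{eq:Kl-Weil} and the bound on \(\lambda_f\). The condition \(Q(p)\ne0\) excludes at most one prime, and \(p+h>0\) matches the absolute values in the theorem. Summing over \(X=x/2^i\) gives the claim, since the geometric decay dominates the logarithmic factor.

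The only delicate point is the uniformity in \(a\) of \(\lambda_{a,1}(p)\le\sn(1)+O(p^{-1/2})\). This must already be established where \(\lambda_{a,\nu}\) is introduced and in Corollary~\ref{cor:Hecke-Kl-mixed}, and I will take it from there. The \(\varepsilon\) loss arises because \eqref{eq:Hecke-ST-prime} carries only an \(o(1)\) error term.
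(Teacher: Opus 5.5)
Your proposal is correct and follows the paper's route: apply Theorem~\ref{thm:prime-variable} with $Q(X)=X+h$, $F=|\lambda_f|$ and $\nu=1$, reuse the Euler-product calculation from Corollary~\ref{cor:Hecke-Kl-mixed} to extract $(\log x)^{\sn(1)^2-2+\varepsilon}$ with $\sn(1)^2=64/(9\pi^2)$, and then sum over dyadic blocks. One small correction: the uniform local input the paper actually proves is \eqref{eq:lambda-prime-upper}, namely $\lambda_{a,1}(p)\le\sn(1)+O(p^{-1/8})$ for every prime $p$, not an $O(p^{-1/2})$ error. This bound already covers $p\mid a$, where $\lambda_{a,1}(p)=p^{-1/2}$. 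A mere $1+O(1/p)$ factor at each $p\mid a$ would cost $(|a|/\varphi(|a|))^{O(1)}$ and destroy uniformity in $a$, so it is this sharper bound, not an $O(1/p)$ one, that makes those primes harmless.
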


Xi also conjectured \cite[Conjecture~2.1]{Xi-2020} that
\begin{equation}\label{eq:Xi-prime-correlation}
 \sum_{p\leq x}\lambda_f(p)\Kl(1;p)
 =o\left(\frac{x}{\log x}\right).
\end{equation}
It must be noted that Corollary~\ref{cor:Hecke-Kl-prime-shift} does {\it not} cover the conjecture \eqref{eq:Xi-prime-correlation} due to the required hypothesis \(Q(0)\ne0\) in Theorem~\ref{thm:prime-variable}, which  excludes \(h=0\). Neither Xi's conjecture
nor Katz's Problem~III \cite[p.~62]{Xi-2020} is  addressed here.

Combining the mixed-moment bound with Theorem~\ref{thm:lower}, we also get the following result.

\begin{corollary}\label{cor:Hecke-Kl-noncoincidence}
Let \(f\) be as above, and let \(a\ne0\) be fixed. For any $\varepsilon>0$, we have
\[
 \#\left\{n\leq x: \mu^2(n)=1,\, (n,2a)=1,\,
 \lambda_f(n)\ne\pm\Kl(a;n)\right\}
 \gg \frac{x}{(\log x)^\varepsilon},
\]
where the implied constant depends only on \(f,a,\varepsilon\).
\end{corollary}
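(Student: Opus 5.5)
We argue by comparing a lower bound for a suitable moment of \(\Kl\) on the restricted set with an upper bound for a mixed moment of \(\lambda_f\) and \(\Kl\). Let \(\mathcal N(x)\) be the set of squarefree \(n\in(x/2,x]\) with \((n,2a)=1\), and let \(\mathcal E(x)\subset\mathcal N(x)\) be the subset where \(\lambda_f(n)=\pm\Kl(a;n)\). On \(\mathcal E(x)\) we have \(|\Kl(a;n)|=|\lambda_f(n)|\). Choose an exponent \(\nu>0\) and write
\[
 \sum_{n\in\mathcal N(x)}|\Kl(a;n)|^{2\nu}
 =\sum_{n\in\mathcal E(x)}|\lambda_f(n)|^{\nu}|\Kl(a;n)|^{\nu}
 +\sum_{n\in\mathcal N(x)\setminus\mathcal E(x)}|\Kl(a;n)|^{2\nu}.
\]

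For the left side, Theorem~\ref{thm:lower} with \(z=1\) and exponent \(2\nu\) gives \(\gg_a x(\log x)^{\sn(2\nu)-1}\). The first sum on the right is at most \(\sum_{x/2<n\le x}|\lambda_f(n)|^\nu|\Kl(a;n)|^\nu\), which by Corollary~\ref{cor:Hecke-Kl-mixed} (with \(y=x/2\)) is \(\ll x(\log x)^{\sn(\nu)^2-1+\varepsilon'}\). The second sum is handled by Cauchy--Schwarz:
\[
 \sum_{n\in\mathcal N\setminus\mathcal E}|\Kl(a;n)|^{2\nu}
 \le \#(\mathcal N\setminus\mathcal E)^{1/2}
 \Bigl(\sum_{n\le x}|\Kl(a;n)|^{4\nu}\Bigr)^{1/2}.
\]
By Corollary~\ref{cor:normalized} the last moment is \(\ll x(\log x)^{\sn(4\nu)-1}\).

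The key step is to choose \(\nu\) so that the mixed term is negligible, which requires \(\sn(\nu)^2<\sn(2\nu)\). By Cauchy--Schwarz for the Sato--Tate measure, \(\sn(\nu)^2\le\sn(2\nu)\sn(0)=\sn(2\nu)\), with strict inequality because \(|2\cos\theta|^\nu\) is not constant. The natural choice is \(\nu=1\): here \(\sn(1)^2=64/(9\pi^2)\approx0.72\), while \(\sn(2)=1\) and \(\sn(4)=2\). Taking \(\varepsilon'\) small, the mixed term is \(o(x)\). The left side is \(\gg x\), so the second sum is \(\gg x\). Hence
\[
 \#(\mathcal N\setminus\mathcal E)\gg \frac{x^2}{x(\log x)^{\sn(4)-1}}=\frac{x}{\log x}.
\]

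This exponent loss is the main obstacle, since the statement asks for \(x/(\log x)^{\varepsilon}\). Cauchy--Schwarz is too lossy and should be replaced by Hölder with exponents close to \(1\). With \(q\) close to \(1\) and \(1/q+1/q'=1\), bound the second sum by \(\#(\mathcal N\setminus\mathcal E)^{1/q}\bigl(\sum|\Kl|^{2q'}\bigr)^{1/q'}\). Corollary~\ref{cor:normalized} gives \(\sum|\Kl|^{2q'}\ll x(\log x)^{\sn(2q')-1}\), and \(\sn(2q')\) has some fixed (though growing) size as a function of \(q'\). Then
\[
 \#(\mathcal N\setminus\mathcal E)\gg x(\log x)^{-(q-1)(\sn(2q')-1)}.
\]

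Balancing the exponents is delicate, because \(q'\to\infty\) as \(q\to1\) and \(\sn(2q')\) grows like a Gamma ratio. The fix is instead to let the Kloosterman exponent \(\nu\) tend to \(0\) and use the log-convexity of \(\sn\). With exponent \(2\nu\) in the lower bound and the Hölder step carried out at exponents \(2\nu q'\), the resulting loss is
\[
 \frac{q}{q'}\bigl(\sn(2\nu q')-1\bigr)-q\bigl(\sn(2\nu)-1\bigr).
\]
Both \(\sn(2\nu)-1\) and \(\sn(2\nu q')-1\) are \(O(\nu q')\), so fixing \(q=2\) and letting \(\nu\to0\) makes this loss at most \(\varepsilon\). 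We must also keep \(\sn(\nu)^2<\sn(2\nu)\) strictly for the small \(\nu\) chosen, which the strict Cauchy--Schwarz inequality above guarantees for every \(\nu>0\). The mixed term is then smaller than the lower bound by a power \((\log x)^{-c(\nu)}\) once \(\varepsilon'<c(\nu)\). Checking that the constants depend only on \(f\), \(a\) and \(\varepsilon\) completes the plan.
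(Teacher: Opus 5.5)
Your final argument is correct and is exactly the paper's proof after the rescaling $\nu\mapsto\nu/2$. It takes the lower bound of Theorem~\ref{thm:lower} at exponent $2\nu$, makes the mixed term from Corollary~\ref{cor:Hecke-Kl-mixed} at $(\nu,\nu)$ negligible via the strict inequality $\sn(\nu)^2<\sn(2\nu)$, and applies Cauchy--Schwarz against the $4\nu$-th moment with $\nu\to0$, so that the loss $1+\sn(4\nu)-2\sn(2\nu)$ falls below $\varepsilon$. The detour through $\nu=1$ and H\"older with $q\to1$ is unnecessary, since the loss at $\nu=1$ comes from the fixed exponent and not from Cauchy--Schwarz; your $q=2$ endpoint is what the paper uses.
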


For a primitive Hecke--Maass cusp form \(g\) with trivial nebentypus, Xi \cite[Theorem~1.1]{Xi-2020} proved that
\[
\#\left\{n\leq x: \mu^2(n)=1,\,\omega(n)\leq 100,\,
\lambda_f(n)\ne\pm\Kl(1;n)\right\}
\gg \frac{x}{\log x}.
\]
Corollary~\ref{cor:Hecke-Kl-noncoincidence} concerns the holomorphic case (but not the maass case in view of the unsettled generalized Ramanujan conjecture), and gives a quantitative result with no restriction on \(\omega(n)\).

\subsection{Number variance on the modular surface}\label{subsec:spectral}

Li and Sarnak's formula for the number variance on the modular
surface involves the Kloosterman sums \(S(u,u;n)\).
When \((u,n)=1\), the identity
\(S(u,u;n)=\sqrt n\,\Kl(u^2;n)\) relates their second moments
to those considered above. Li and Sarnak
\cite[(1.13)]{LiSarnak-2004}
conjectured that, for each fixed nonzero integer \(u\),
\begin{equation}\label{eq:LS-conjecture}
 \sum_{n\leq x}\frac{|S(u,u;n)|^2}{n^2}
 \sim c_{u}\log x,
\end{equation}
with a positive constant \(c_{u}\). We confirm the order of magnitude -- a weaker form of their conjecture, below.   

\begin{theorem}\label{thm:LS}
Let \(A>0\) be fixed. There exists a constant $x_0(A)$ depending on $A$ such that uniformly for all $x\ge x_0(A)$ and  nonzero integers \(u\) with \(|u|\leq x^A\),
\[
 \prod_{p\mid u}\left(1+\frac{1}{p}\right)^{-1}
 \ll \frac{1}{\log x}\sum_{n\leq x}\frac{|S(u,u;n)|^2}{n^2}
 \ll \tau(|u|)^2,
\]
where \(\tau\) is the divisor function, the first implied constant
depends only on \(A\), and the second is absolute. The upper bound
holds for every nonzero integer \(u\).
\end{theorem}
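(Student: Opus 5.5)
The strategy is to reduce the sum to second moments of $\Kl(u^2;n)$, for which Corollary~\ref{cor:moment-order} (lower bound via Theorem~\ref{thm:lower}, upper bound via Corollary~\ref{cor:normalized}) gives the order $x$, and then to pass from $\sum|\Kl|^2$ to $\sum|\Kl|^2/n$ by dyadic partial summation.

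\textbf{Lower bound.} Keep only squarefree $n$ with $(n,2u)=1$. For these $(u,n)=1$, so $|S(u,u;n)|^2/n^2=|\Kl(u^2;n)|^2/n$. Apply Theorem~\ref{thm:lower} with $z=1$, $\nu=2$ and $a=u^2$, noting $|a|\le x^{2A}$ and $\sn(2)=1$. On each dyadic block $(X/2,X]$ with $X\ge x^{1/2}$, so that $|u^2|\le X^{4A}$, this gives
\[
\sum_{X/2<n\le X}\frac{|S(u,u;n)|^2}{n^2}\ \ge\ \frac1X\sum_{\substack{X/2<n\le X\\ n\ \mathrm{squarefree},\,(n,2u)=1}}|\Kl(u^2;n)|^2\ \gg_A\ \prod_{p\mid u}\Bigl(1+\frac1p\Bigr)^{-1},
\]
since the primes dividing $a=u^2$ are those dividing $u$. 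Summing over the $\asymp\log x$ dyadic blocks $X=x,x/2,\dots$ with $X\ge x^{1/2}$ gives the lower bound. The requirement that $X$ be large in terms of $A$ is what produces $x_0(A)$.

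\textbf{Upper bound.} Now $(u,n)$ need not be $1$. Write $d=(u,n)$ and factor $n=n_1n_2$ with $(n_1,n_2)=1$, where $n_1$ is composed of primes dividing $u$ and $(n_2,u)=1$. By \eqref{eq:Kl-mult}, $S(u,u;n)=S(u\bar n_2^{\,*},u\bar n_2^{\,*};n_1)\,S(\cdot,\cdot;n_2)$, with suitable unit twists on each factor. The $n_2$-factor equals $\sqrt{n_2}\,\Kl(u^2c;n_2)$ for some unit $c$ modulo $n_2$, and the twist can be absorbed since $|\Kl(b;n_2)|$ depends only on $b$ modulo $n_2$. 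For the $n_1$-factor I would use the standard bound $|S(m,m;p^e)|\le \tau(p^e)\,(m,p^e)^{1/2}p^{e/2}$ and its analogue for $p=2$, which follows from Estermann's bound including gcd factors. This gives
\[
\frac{|S(u,u;n)|^2}{n^2}\ \ll\ \frac{\tau(n_1)^2\,(u,n_1)}{n_1}\cdot\frac{|\Kl(u^2c;n_2)|^2}{n_2}.
\]
Sum over $n_2\le x/n_1$ using partial summation and the uniform-in-$a$ upper bound $\sum_{m\le X}|\Kl(a;m)|^2\ll X$ from Corollary~\ref{cor:moment-order} with $z=1$, $\nu=2$; this bound holds for every $a$ and requires no coprimality. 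One technical point is that the argument $u^2c$ depends on $n_1$ but not on $n_2$, so each fixed $n_1$ gives a clean sum, bounded by $\ll\log x$. The remaining task is
\[
\sum_{n_1\mid u^\infty}\frac{\tau(n_1)^2(u,n_1)}{n_1}=\prod_{p^e\parallel u}\ \sum_{k\ge0}\frac{(k+1)^2p^{\min(k,e)}}{p^k}.
\]
For a prime $p^e\parallel u$, the local factor is about $\sum_{k\le e}(k+1)^2+O(e^2)\ll (e+1)^3$, which is too large compared with the target $\tau(u)^2=\prod(e+1)^2$.

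\textbf{Main obstacle.} The crux is obtaining a sharper local bound at primes dividing $u$. For $k\ge2$ and $p\mid m$, explicit stationary-phase evaluation of $S(m,m;p^k)$ shows that it vanishes unless $v_p(m)$ is in a constrained relation with $k$, or is bounded by $2p^{(k+v)/2}$ without the $\tau$ loss. I expect this to bring the local factor down to $\ll(e+1)^2$. I would first try the recursion $S(pm,pm;p^k)=p\,S(m,m;p^{k-2})$ for $k\ge2$, together with the vanishing when $p\mid m$ and $p^2\nmid m$ at the appropriate levels. The bookkeeping should be arranged so that the absolute constant is uniform across all primes, including $p=2$, where the factor $\kappa$ from \eqref{eq:Kl-Weil} enters.
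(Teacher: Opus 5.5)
Your lower bound is correct and is the paper's argument. You restrict to $n$ coprime to $u$, use $S(u,u;n)=\sqrt n\,\Kl(u^2;n)$, apply Theorem~\ref{thm:lower} with $z=1$, $\nu=2$, $a=u^2$ on ranges $\sqrt x\le X\le x$, and sum $|\Kl|^2/n$ over the $\asymp\log x$ dyadic blocks.

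\textbf{The upper bound is not proved.} Your own computation shows why. The bound $|S(m,n;p^k)|\le\tau(p^k)(m,n,p^k)^{1/2}p^{k/2}$ gives the local factor $\sum_{k\ge0}(k+1)^2p^{\min(k,e)-k}$ at $p^e\parallel u$. This is $\asymp(e+1)^3$, and already equals $5+O(1/p)>4=\tau(p)^2$ when $e=1$. So you lose a factor $(5/4)^{\omega(u)}$ for squarefree $u$, and a factor $\asymp e+1$ at high prime powers; neither is $\ll\tau(|u|)^2$ with an absolute constant.

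Your final paragraph only records an expectation, and the tool it proposes is wrong. The recursion $S(pm,pm;p^k)=p\,S(m,m;p^{k-2})$ is false: for $p\nmid m$ one has $S(pm,pm;p^2)=p\,S(m,m;p)=p^{3/2}\Kl(m^2;p)$, not $p\,S(m,m;1)=p$. The correct identity is $S(pm,pn;p^k)=p\,S(m,n;p^{k-1})$ for $k\ge2$, and no vanishing mechanism is involved. With it your route could be finished. For $p^e\parallel u$ and a unit $t$,
\begin{itemize}
\item $S(u,ut^2;p^k)=\varphi(p^k)$ when $k\le e$;
\item $|S(u,ut^2;p^k)|^2p^{-2k}\ll p^{e-k}$ when $k>e$.
\end{itemize}
This gives a local factor $1+e+O(1/p)$, which suffices. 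As written, though, this step is missing. (Your handling of the $n_2$-factor is fine, but the reason should be stated. Its argument is $u^2\overline{n_1}^{\,2}$ with the inverse taken modulo $n_2$. By the Chinese remainder theorem there is one integer $w$ with $wn_1^2\equiv1$ modulo every $n_2\le x$ coprime to $n_1$. The bound of Corollary~\ref{cor:moment-order} is uniform in $a\ne0$ with no size restriction, so you may take $a=u^2w$.)

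\textbf{The missing idea, which is what the paper uses, is Selberg's identity}
\[
S(u,u;n)=\sum_{d\mid(|u|,n)}\sqrt{nd}\,\Kl(u^2/d^2;n/d).
\]
It removes all local analysis at primes dividing $u$. It produces Kloosterman sums with fixed nonzero integer arguments $u^2/d^2$ and arbitrary moduli $q=n/d$. That is exactly the setting where $\sum_{q\le X}|\Kl(a;q)|^2\ll X$ holds with no coprimality condition, the fact you had already isolated. Cauchy--Schwarz over $d\mid(|u|,n)$ costs one factor $\tau(|u|)$, and writing $n=dq$ gives
\[
\sum_{n\le x}\frac{|S(u,u;n)|^2}{n^2}\le\tau(|u|)\sum_{d\mid u}\sum_{q\le x/d}\frac{|\Kl(u^2/d^2;q)|^2}{q}.
\]
Each inner sum is $\ll\log x$ by partial summation, and the sum over $d$ costs the second factor $\tau(|u|)$.
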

\begin{remark*}
    The case $u=0$ cannot be included as \(S(0,0,n)= \varphi(n)\). 
\end{remark*}

We now use our moment
bounds and the variance formula of Li and Sarnak
\cite[Theorem~1.4]{LiSarnak-2004} to estimate the smoothed
number variance and the remainder in Weyl's law. See also \cite[\S11.1]{Michel-2006}.

Let \(1/4+t_j^2\;(t_j\geq0)\) denote the cuspidal Laplace
eigenvalues on \(\mathrm{SL}(2,\Z)\backslash\mathbb H\). We put \(N(t)=\#\{j:0\leq t_j\leq t\}\), counted
with multiplicity. As in \cite[(1.2)]{LiSarnak-2004}, we define the remainder \(S(t)\) by
\begin{equation}\label{eq:spectral-Weyl}
 N(t)=\frac{t^2}{12}-\frac{2t\log t}{\pi}
 +\frac{2+\log\pi-\log2}{\pi}t+\frac{13}{144}+S(t).
\end{equation}

Let \(h\) be a fixed real even Schwartz function with
\(\widehat h(0)=1\) and \(\operatorname{supp}\widehat h\subset(-1,1)\), where
\(\widehat h(\xi)=\int_{\R}h(x)\e(-x\xi)\,dx\) is its Fourier transform.
Let \(\psi\in C_c^\infty((1,2))\) be nonnegative, with
\(\int_0^\infty\psi(x)\,dx=1\). Define the smoothed counting
function and its variance by
\[
 N_h(t,L)=\sum_j h(L(t-t_j)),
 \qquad
 \Sigma_h(T,L)=\frac{1}{T}\int_0^\infty\psi(t/T)
 \left(N_h(t,L)-\frac{t}{6L}\right)^2dt.
\]
For fixed \(0<\delta<1/121\), consider the range
\begin{equation}\label{eq:spectral-window}
 \frac{1+\delta}{\pi}\log T\leq L
 \leq\frac{122}{121\pi}\log T.
\end{equation}

\begin{corollary}\label{cor:spectral-variance}
Let \(h,\psi,\delta\) be as above, with \(\widehat h\not\equiv 0\) in  \(((1+\delta)^{-1},1)\). Then uniformly for \(L\) in the range
\eqref{eq:spectral-window}, we have
\begin{equation}\label{eq:spectral-variance-order}
 \Sigma_h(T,L)\asymp\frac{T}{L}.
\end{equation}
Moreover, 
\begin{equation}\label{eq:spectral-mean-square}
 \int_T^{2T}S(t)^2\,dt\gg\frac{T^2}{\log T}.
\end{equation}
In particular,
\[
 S(T)=\Omega \bigg(\sqrt{\frac{T}{\log T}}\bigg),
\]
where the implied constants in these estimates depend only on \(h,\psi,\delta\).
\end{corollary}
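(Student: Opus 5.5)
The plan is to feed the Kloosterman second moment of Theorem~\ref{thm:LS} into the Li--Sarnak variance formula \cite[Theorem~1.4]{LiSarnak-2004}. That formula, derived via Selberg/Kuznetsov, expresses \(\Sigma_h(T,L)\) as
\[
\frac{T}{L}\bigl(\mathcal D_h(L)+\mathcal A_h(T,L)\bigr)+O\bigl(\text{smaller}\bigr).
\]
Here \(\mathcal D_h\) is a nonnegative diagonal integral of \(|\widehat h|^2\). The arithmetic term \(\mathcal A_h\) is a sum over \(n\) and \(u\ne0\) of \(|S(u,u;n)|^2/n^2\) against a nonnegative weight of the shape \(|\widehat h(\xi)|^2\). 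This weight is evaluated essentially at \(\xi\approx \log(u^2 T^2/n)/(2\pi L)\), or at whatever normalization \cite{LiSarnak-2004} uses. The role of the window \eqref{eq:spectral-window} is that this is exactly where their formula is valid with an admissible error, while the arithmetic term genuinely contributes.

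\textbf{Step 1 (order of the arithmetic term).} I will write the \(n\)-sum in \(\mathcal A_h\) by partial summation against \(\sum_{n\le x}|S(u,u;n)|^2/n^2\). For the upper bound I use the bound \(\ll\tau(|u|)^2\log x\) from Theorem~\ref{thm:LS}, valid for all \(u\). The \(u\)-sum then converges because the support condition \(\operatorname{supp}\widehat h\subset(-1,1)\) truncates \(u\) to \(|u|\le T^{O(1)}\) with rapidly decaying weight. Summing over this range gives \(\mathcal A_h\ll 1\).

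For the lower bound I keep only \(u=\pm1\) (or a single fixed \(u\)), which is allowed since all weights are nonnegative. On the dyadic ranges \(n\in[x,2x]\) with \(\log x/(\pi L)\in((1+\delta)^{-1},1)\), the hypothesis \(\widehat h\not\equiv0\) on this interval supplies a subinterval where \(|\widehat h|^2\gg1\). The lower bound of Theorem~\ref{thm:LS}, differenced dyadically, gives \(\sum_{x<n\le x^{1+\eta}}|S(1,1;n)|^2/n^2\gg\log x\). This yields \(\mathcal A_h\gg1\), so combining the two bounds gives \(\Sigma_h\asymp T/L\), which is \eqref{eq:spectral-variance-order}.

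\textbf{Step 2 (passage to \(S(t)\)).} Write
\[
N_h(t,L)=\int h(L(t-s))\,dN(s).
\]
The smooth part of \eqref{eq:spectral-Weyl} produces \(t/(6L)\) plus lower-order terms of size \(O(\log t/L)\), which are negligible compared with \(\sqrt{T/L}\) in mean square. Integrating by parts, the fluctuating part is
\[
L\int h'(L(t-s))\,S(s)\,ds.
\]
Cauchy--Schwarz with the measure \(|h'(L(t-s))|\,ds\) gives
\[
\Bigl|N_h-\tfrac{t}{6L}\Bigr|^2\ll L\|h'\|_1\int|h'(L(t-s))|\,S(s)^2\,ds+\text{(negligible)}.
\]
Integrating against \(\psi(t/T)/T\) and using \(\int|h'(L(t-s))|\,dt=\|h'\|_1/L\) leads to
\[
\Sigma_h\ll\frac1T\int_{T}^{2T}S(s)^2\,ds+\text{tails}.
\]
The tails come from \(s\notin[T,2T]\), but \(\operatorname{supp}\psi\subset(1,2)\) is compact, so these are \(s\) at distance \(\gg T\) from \(\operatorname{supp}\psi(\cdot/T)\). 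They are controlled by the Schwartz decay of \(h'\) together with the trivial bound \(S(s)\ll s/\log s\). Combining with Step~1 at \(L\asymp\log T\) gives \(\int_T^{2T}S^2\gg T^2/\log T\), which is \eqref{eq:spectral-mean-square}. The \(\Omega\)-result follows immediately, since \(\max_{[T,2T]}|S|^2\ge T^{-1}\int_T^{2T}S^2\).

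\textbf{Main obstacle.} The main difficulty is Step~1's bookkeeping. I must transcribe the Li--Sarnak formula exactly: the precise argument of \(\widehat h\), the range of \(u\), and the form of the error term. Then I must check that the hypothesis \(\widehat h\not\equiv0\) on \(((1+\delta)^{-1},1)\) really corresponds to moduli \(n\) in a range where Theorem~\ref{thm:LS} applies with \(x\ge x_0(A)\). I also need to confirm that the error term is \(o(T/L)\) throughout \eqref{eq:spectral-window}. If a power \(|u|^{-c}\) is absent from the weights, I will instead use the \(T^{O(1)}\) truncation in \(u\) together with \(\sum_{|u|\le U}\tau(u)^2/u^{1+c}\ll1\).
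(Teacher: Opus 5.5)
Your architecture matches the paper's: the Li--Sarnak variance formula, the upper bound of Theorem~\ref{thm:LS} for every $u$, the single term $u=1$ for the lower bound, and your Step~2, which is essentially the paper's integration by parts plus Cauchy--Schwarz transfer. However, the lower bound in Step~1 fails as written.

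\textbf{The lower bound in Step~1.} You get $\sum_{x<n\le x^{1+\eta}}|S(1,1;n)|^2/n^2\gg\log x$ by differencing Theorem~\ref{thm:LS}. That theorem only gives $c_0\log x\le\sum_{n\le x}|S(1,1;n)|^2/n^2\le C_0\log x$ with unrelated constants. Differencing therefore yields only $(c_0(1+\eta)-C_0)\log x$, which is useless unless $\eta>C_0/c_0-1$. Since $\widehat h$ is only assumed nonzero somewhere in $((1+\delta)^{-1},1)$, the relevant moduli run over $T^{c_1}<n\le T^{c_2}$, and $c_2/c_1$ can be close to $1$ depending on $h$. So you cannot take $\eta$ large.

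The missing input is a bound on the scale of $x$ rather than $\log x$. Since $|S(1,1;n)|^2/n^2=|\Kl(1;n)|^2/n$ and $\sum_{n\le y}|\Kl(1;n)|^2\asymp y$ by \eqref{eq:cor41-fixeda-2}, partial summation gives $\sum_{X<n\le Y}|\Kl(1;n)|^2/n\ge c\log(Y/X)-O(1)\gg\log(Y/X)$ once $Y/X$ exceeds a constant. This is exactly what the paper uses.

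\textbf{The normalization matters.} In Li--Sarnak's Theorem~1.4 as used in the paper, $\Sigma_h(T,L)=R(T,L)+O(T/L^2)$ on all of \eqref{eq:spectral-window}. Here $R$ is $T/L^2$ times a sum over coprime $u,n\ge1$ of $\prod_{p\mid n}(1-p^{-2})^{-2}|S(u,u;n)|^2/(u^2n^2)$, weighted by $|\widehat h(\log(Tn/u)/(\pi L))|^2$. There is no separate diagonal term.

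For the upper bound, the $u$-sum converges because of the explicit factor $u^{-2}$. The condition $\operatorname{supp}\widehat h\subset(-1,1)$ truncates $n<ue^{\pi L}/T\le uT^{1/121}$, not $u$. This gives $R\ll(T/L^2)\sum_{u}\tau(u)^2\log(uT^{1/121})/u^2\ll T/L$.

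For the lower bound, pick $(1+\delta)^{-1}<b_1<b_2<1$ with $|\widehat h|\gg1$ on $[b_1,b_2]$. The $u=1$ terms then have $T^{c_1}<n\le T^{c_2}$, where $c_1\ge(1+\delta)b_1-1>0$ and $c_2-c_1\ge(1+\delta)(b_2-b_1)$ uniformly in the window. This is where the endpoint $(1+\delta)^{-1}$ is used, and the partial-summation bound above gives $R\gg(T/L^2)\log T\asymp T/L$. With your guessed argument $\log(u^2T^2/n)/(2\pi L)$, every $u=1$ term would have argument at most $\log T/(\pi L)\le(1+\delta)^{-1}$. The hypothesis on $\widehat h$ would then never be used, and the lower bound would not follow.
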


Note that the nonvanishing condition holds, for example, if \(\widehat h=1\)
on \([-b,b]\) for some \((1+\delta)^{-1}<b<1\).

Li and Sarnak \cite[p. 5]{LiSarnak-2004}
observed that, under \eqref{eq:LS-conjecture}, the number variance
would be asymptotic to a positive multiple of \(T/L\), as
predicted by a Poisson model for the spectrum. Our estimate \eqref{eq:spectral-variance-order} establishes the order
of magnitude \(T/L\) throughout the range
\eqref{eq:spectral-window}.

Previously, Selberg's estimates \cite[(1.5)--(1.6)]{LiSarnak-2004} are
\[
 \int_T^{2T}S(t)^2\,dt
 \gg\frac{T^2}{(\log T)^2},
 \qquad
 S(T)=\Omega \bigg(\frac{\sqrt T}{\log T}\bigg).
\]
Improving his lower bound remains a challenging problem, with only modest progress to date.
Li and Sarnak \cite[Theorem~1.1]{LiSarnak-2004} obtained the first
improvement by a factor $\exp((\log \log T)^{5/17})$,  which is still $o(\log n)$,  
\[
\begin{aligned}
 \int_T^{2T}S(t)^2\,dt
 &\gg\frac{T^2}{(\log T)^2}
 \exp\!\Big((\log\log T)^{\frac{5}{17}}\Big),\\
 S(T)&=\Omega\bigg(\frac{\sqrt T}{\log T}
 \exp \Big(\frac{1}{2}(\log\log T)^{\frac{5}{17}}\Big)\bigg).
\end{aligned}
\]
The lower bound \eqref{eq:spectral-mean-square}  represents further progress toward Selberg's  mean-square estimate, giving a bound of order $T^2/\log T$. While this improvement may appear modest,  Li and Sarnak  \cite[p. 5]{LiSarnak-2004} suggested that it may be the true order.

\subsection{Shifted moments of Kloosterman sums}

El Abdalaoui, Shparlinski, and Steiner
\cite[Conjecture~2.2(2)]{ASS-2024} conjectured that, for
each fixed integer \(a\ne0\), fixed integers
\(0\leq h_1<\cdots<h_s\), and fixed positive even integers
\(\nu_1,\ldots,\nu_s\), one has
\begin{equation}\label{eq:ASS-conj}
\frac{1}{x}\sum_{n\leq x}
\prod_{j=1}^s
\left(
\frac{|\Kl(a;n+h_j)|}{2^{\omega(n+h_j)}}
\right)^{\nu_j}
\ll
\bigg(
\frac{1}{x}\sum_{n\leq x}
\frac{|\Kl(a;n)|}{2^{\omega(n)}}
\bigg)^{\nu_1+\cdots+\nu_s}.
\end{equation}

This conjecture leads us to consider these shifted moments
for arbitrary positive real exponents. The following bound
is a consequence of
Corollary~\ref{cor:normalized} with \(Q_j(X)=X+h_j\).

\begin{corollary}\label{cor:ASS}
Let \(0\leq h_1<\cdots<h_s\) be fixed integers, and let
\(\nu_1,\ldots,\nu_s>0\) be fixed. Uniformly for
\(a\in\Z\setminus\{0\}\), we have
\[
 \sum_{n\leq x}
 \prod_{j=1}^s
 \left(\frac{|\Kl(a;n+h_j)|}{2^{\omega(n+h_j)}}\right)^{\nu_j}
 \ll x(\log x)^{\sum_{j=1}^s2^{-\nu_j}\sn(\nu_j)-s},
\]
where the implied constant depends only on \(\boldsymbol\nu,\mathbf h\).
\end{corollary}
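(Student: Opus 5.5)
We apply Corollary~\ref{cor:normalized} with \(k=s\), \(Q_j(X)=X+h_j\) and \(z_j=2^{-\nu_j}\), and check its hypotheses. The product on the left of Corollary~\ref{cor:ASS} is exactly \(\prod_j z_j^{\omega(n+h_j)}|\Kl(a;n+h_j)|^{\nu_j}\), since \(2^{-\nu_j\omega(m)}=(2^{-\nu_j})^{\omega(m)}\). The polynomials \(X+h_j\) are irreducible, pairwise coprime because the \(h_j\) are distinct, and \(Q=\prod_j(X+h_j)\) is monic, hence primitive. For \(n\ge1\) and \(h_j\ge0\) we have \(Q_j(n)=n+h_j\ge1\), so \(Q(n)\ne0\) and \(|Q_j(n)|=n+h_j\). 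Thus the summand in Corollary~\ref{cor:normalized} coincides with that of Corollary~\ref{cor:ASS}.

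Next we fix \(\alpha=\delta=1/2\), say, and take \(y=x/2\) on a dyadic interval \((x/2,x]\). Corollary~\ref{cor:normalized} then gives, for \(x\ge c_0\|Q\|^{1/2}\),
\[
\sum_{x/2<n\le x}\prod_{j=1}^s\Bigl(\frac{|\Kl(a;n+h_j)|}{2^{\omega(n+h_j)}}\Bigr)^{\nu_j}\ll x(\log x)^{E},\qquad E:=\sum_{j=1}^s2^{-\nu_j}\sn(\nu_j)-s,
\]
uniformly in \(a\ne0\). Here \(c_0\) and the implied constant depend only on \(\bnu\), \(\mathbf z\), the \(Q_j\), \(\alpha\) and \(\delta\), that is, on \(\bnu,\mathbf h\). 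Strictly speaking the corollary is stated for \(x<n\le x+y\) with \(y\le x\); we apply it with \(x/2\) in place of \(x\) and \(y=x/2\), and \((\log(x/2))^{E}\asymp(\log x)^{E}\).

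Finally we sum over dyadic intervals \((x2^{-i-1},x2^{-i}]\) with \(x2^{-i}\ge X_0\), where \(X_0\) is a threshold depending only on \(\mathbf h,\bnu\). The remaining \(n\le X_0\) contribute \(O_{\mathbf h,\bnu}(1)\), because by \eqref{eq:Kl-Weil} each summand is at most \(\prod_j(2^{3/2})^{\nu_j}\) uniformly in \(a\). The main obstacle, mild as it is, is that the exponent \(E\) may be negative: \(\sn(\nu)\le\) large but \(2^{-\nu}\sn(\nu)<1\) for every \(\nu>0\), since \(|2\cos\theta|^\nu2^{-\nu}\le1\) with strict inequality almost everywhere. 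With \(E<0\) we cannot simply bound \((\log(x2^{-i}))^{E}\) by \((\log x)^{E}\). We handle this by splitting the sum. For \(x2^{-i}\ge\sqrt x\) we have \(\log(x2^{-i})\ge\frac12\log x\), so these terms contribute \(\ll\sum_i x2^{-i}(\log x)^{E}\ll x(\log x)^{E}\). The terms with \(x2^{-i}<\sqrt x\) contribute trivially \(\ll\sqrt x\), which is \(\ll x(\log x)^{E}\) for every fixed \(E\). Adding the \(O(1)\) from \(n\le X_0\) (again absorbed for \(x\ge3\)) completes the proof of Corollary~\ref{cor:ASS}.
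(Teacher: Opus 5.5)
Your proof is correct and follows the paper's route: apply Corollary~\ref{cor:normalized} with \(Q_j(X)=X+h_j\) and \(z_j=2^{-\nu_j}\) on dyadic intervals, then sum. Your explicit treatment of the negative exponent \(E\) fills in what the paper compresses into ``the dyadic argument'': you split at \(\sqrt x\) and bound the short range trivially via \eqref{eq:Kl-Weil}.
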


The conjectured inequality \eqref{eq:ASS-conj} has already failed for \(s=1\),
\(h_1=0\), and \(\nu_1=2\). Indeed, \eqref{eq:cor41-fixeda} implies, for fixed $a\neq 0$,
\begin{equation*}
 \frac{\displaystyle\frac{1}{x}\sum_{n\leq x}
       \bigg(\frac{|\Kl(a;n)|}{2^{\omega(n)}}\bigg)^2}
      {\displaystyle\bigg(\frac{1}{x}\sum_{n\leq x}
       \frac{|\Kl(a;n)|}{2^{\omega(n)}}\bigg)^2}
 \asymp(\log x)^{\frac{5}{4}-\frac{8}{3\pi}}
 \longrightarrow\infty
 \qquad(x\to\infty), 
\end{equation*}
which would have been $O(1)$ if \eqref{eq:ASS-conj} was true.

For independent nonnegative random variables $X_1,\ldots, X_s$, mixed moments
factor into products of the individual moments of the corresponding
orders; see \cite[Theorem~2.1.13]{Durrett-2019}. One still has $\mathbb{E}( \prod_{i=1}^s X_i^{\nu_i})\le \prod_{i=1}^s \mathbb{E}(X_i^{\nu_i})$ if the random variables are negatively associated\footnote{The intuition behind negatively associated nonnegative random variables is that these random variables are not simultaneously large.}, rather than independent.
To interpret probabilistically the normalized Kloosterman values at
distinct shifts, we are therefore led to the following inequality instead of
\eqref{eq:ASS-conj}: for fixed \(a\ne0\), fixed integers
\(0\leq h_1<\cdots<h_s\), and fixed real
\(\nu_1,\ldots,\nu_s\geq1\), one would expect
\begin{equation}\label{eq:ASS-moment-product}
 \frac{1}{x}\sum_{n\leq x}
 \prod_{j=1}^s
 \left(\frac{|\Kl(a;n+h_j)|}{2^{\omega(n+h_j)}}\right)^{\nu_j}
 \ll
 \prod_{j=1}^s
 \bigg(
 \frac{1}{x}\sum_{n\leq x}
 \left(\frac{|\Kl(a;n)|}{2^{\omega(n)}}\right)^{\nu_j}
 \bigg).
\end{equation}
Now, Corollary~\ref{cor:ASS} gives the left-hand side of  \eqref{eq:ASS-moment-product} the upper bound \((\log x)^{\sum_{j=1}^s(2^{-\nu_j}\sn(\nu_j)-1)}\) that is dominated by the right-hand side by  \eqref{eq:cor41-fixeda}, verifying \eqref{eq:ASS-moment-product}.

\section{Local averages and weighted congruence densities}

We first estimate the local Kloosterman moments, whose prime averages
determine the logarithmic exponents in the corollaries. We then prove the
multiplicativity of the weighted density, compute it at primes outside
\(\mathcal S\), and bound it at the remaining primes not dividing \(D\).

By \eqref{eq:Kl-Weil}, which follows from the Weil--Estermann estimates \cite[(1.1)]{ASS-2024}, we have
\begin{equation}\label{eq:prime-power-Kl-bound}
 |\Kl(b;p^\ell)|\leq
 \begin{cases}
  2,&p>2,\\
  2\kappa(\ell)\le 2^{5/2},&p=2,
 \end{cases}
 \qquad(b\in\Z,\ \ell\geq1).
\end{equation}
For fixed \(\nu\geq0\), define the local averages
\begin{equation}\label{eq:lambda}
 \lambda_{a,\nu}(p^\ell)
 :=
 \frac1{\varphi(p^\ell)}
 \sideset{}{^*}\sum_{h\bmod p^\ell}
 |\Kl(a\bar h^{2};p^\ell)|^\nu
 \qquad(\ell\geq1).
\end{equation}
We also set \(\lambda_{a,\nu}(1)=1\) by convention. 
\begin{lemma}\label{lem:lambda}
Let \(\nu\geq0\) be fixed. For \(\ell \geq 1\), \(\lambda_{a,\nu}(2^\ell)\leq 2^{\frac52\nu}\) and  
\begin{equation}\label{eq:lambda-power}
 \lambda_{a,\nu}(p^\ell)\leq  2^\nu
 \qquad(p \ \textrm{odd}).
\end{equation}
If \(p\nmid2a\), then
\begin{equation}\label{eq:lambda-prime}
 \lambda_{a,\nu}(p)=\sn(\nu)+O_\nu(p^{-1/8}),
\end{equation}
uniformly in \(a\bmod p\). For a prime \(p\) and an integer \(a\), we also have
\begin{equation}\label{eq:lambda-prime-upper}
 \lambda_{a,\nu}(p)\leq\sn(\nu)+O_\nu(p^{-1/8}).
\end{equation}
\end{lemma}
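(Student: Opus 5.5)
The two uniform bounds come straight from \eqref{eq:prime-power-Kl-bound}. For odd \(p\) and \(\ell\ge1\), every term \(|\Kl(a\bar h^2;p^\ell)|^\nu\) in \eqref{eq:lambda} is at most \(2^\nu\). For \(p=2\) the bound is \(2^{5\nu/2}\). Since \(\lambda_{a,\nu}(p^\ell)\) is an average over \(\varphi(p^\ell)\) terms, \eqref{eq:lambda-power} and the bound at \(2^\ell\) follow at once.

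For \eqref{eq:lambda-prime}, take \(p\nmid 2a\) and \(\ell=1\). As \(h\) runs over \((\Z/p\Z)^\times\), \(b=a\bar h^2\) runs over the coset \(a(\F_p^\times)^2\), each element hit exactly twice. Hence \(\lambda_{a,\nu}(p)\) is the average of \(|\Kl(b;p)|^\nu\) over \(b\) in a fixed square class, which has \((p-1)/2\) elements. Write \(\Kl(b;p)=2\cos\theta_{p,b}\). The plan is to invoke Katz's equidistribution theorem with an explicit error term. The Kloosterman sheaf has geometric monodromy \(\mathrm{SL}_2\). By Deligne's Riemann hypothesis, for each irreducible representation \(\mathrm{Sym}^m\) one has
\[
\sum_{b\in\F_p^\times}\chi(b)\,U_m(\cos\theta_{p,b})\ll (m+1)p^{1/2},
\]
for \(\chi\) trivial or the quadratic character. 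These two characters isolate the square class, since \(\mathcal L_\chi\otimes\mathrm{Sym}^m\mathcal{K}l\) is geometrically irreducible and nontrivial for \(m\ge1\); this is in Katz's book on Gauss sums and Kloosterman sums and in Fouvry--Michel. So the Weyl sums of the Chebyshev functions \(U_m\) over the coset are \(O((m+1)p^{-1/2})\) after normalization.

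The function \(\theta\mapsto|2\cos\theta|^\nu\) is not smooth when \(\nu\) is not an even integer, because of the zero at \(\theta=\pi/2\). So I approximate it by a polynomial in \(2\cos\theta\) of degree \(M\). Examples are a Fejér/Beurling--Selberg type majorant and minorant, or a Jackson approximation. Since \(|t|^\nu\) is Hölder of order \(\min(\nu,1)\) on \([-2,2]\), the sup-norm error is \(O(M^{-\min(\nu,1)})\). The coefficients in the \(U_m\) basis are polynomially bounded, say \(\ll M^{c}\). The total error is then
\[
O\bigl(M^{-\min(\nu,1)}+M^{c}p^{-1/2}\bigr).
\]
The Sato--Tate integral of the approximant equals \(\sn(\nu)\) up to the same sup error, so choosing \(M=p^{\eta}\) optimally gives a power saving.

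The main obstacle is getting the exponent \(1/8\) uniformly for small \(\nu\). When \(\nu<1\), the Hölder exponent \(\nu\) degrades the saving to roughly \(p^{-\nu/(2(c+\nu))}\). I would handle this by treating the neighbourhood of \(\theta=\pi/2\) separately. There I bound \(|2\cos\theta|^\nu\) trivially and control the number of \(b\) with \(|\Kl(b;p)|\le \eta\) using the same equidistribution estimate applied to a Beurling--Selberg majorant of the indicator of a short interval. On the complement the function is smooth, with derivatives of size \(\eta^{-k}\).

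Alternatively, the error term is \(O_\nu\), so one may simply accept the exponent \(1/8\) as coming from a discrepancy bound. The Erdős--Turán/Niederreiter inequality gives a discrepancy \(\ll p^{-1/4}\) after taking \(M\asymp p^{1/4}\) with \(\sum_{m\le M}m^{-1}\cdot m\,p^{-1/2}\ll Mp^{-1/2}\). Koksma's inequality for the function of bounded variation \(|2\cos\theta|^\nu\) then gives \(O(p^{-1/4})\), which is stronger than \(p^{-1/8}\). For \(\nu<1\) the variation is still finite, so Koksma applies directly; this route avoids the smoothing issue entirely, and I would try it first.

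Finally, \eqref{eq:lambda-prime-upper} covers all primes. If \(p\nmid 2a\) it is \eqref{eq:lambda-prime}. For \(p=2\), the trivial bound \(2^{5\nu/2}\) is absorbed into the \(O_\nu(1)\) term. If \(p\mid a\) with \(p\) odd, then \(\Kl(a\bar h^2;p)=p^{-1/2}S(0,1;p)=-p^{-1/2}\). Hence \(\lambda_{a,\nu}(p)=p^{-\nu/2}\), which is at most \(\sn(\nu)\) when \(\nu\ge 0\) and \(p\) is large, since \(\sn(\nu)>0\) and \(p^{-\nu/2}\le 1\). For the finitely many small \(p\), the statement is absorbed into the \(O_\nu\) term.
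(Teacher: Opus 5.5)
Your proposal is correct, and the route you say you would try first is essentially the paper's. You bound the $\operatorname{sym}_r$ (Chebyshev) Weyl sums over the square class $a(\F_p^\times)^2$ uniformly in $a$, turn this into a Sato--Tate discrepancy bound for the angles $\theta_{p,a\bar h^2}$, and integrate the bounded-variation function $|2\cos\theta|^\nu$ against it; your Koksma step is the paper's integration by parts, and you treat $p=2$ and $p\mid a$ in \eqref{eq:lambda-prime-upper} exactly as the paper does.

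The differences are minor:
\begin{itemize}
\item You isolate the square class by twisting with the quadratic character. The paper instead combines Katz's full-sum bound with Fouvry--Michel's square-parameter bound and splits on whether $a$ is a residue.
\item Your Niederreiter-type Erd\H{o}s--Tur\'an inequality gives the stronger discrepancy $p^{-1/4}$, in place of the $p^{-1/8}$ the paper takes from Fouvry--Michel.
\item The polynomial-approximation plan you describe first is an unnecessary detour.
\item For $p\mid a$, the reason $p^{-\nu/2}\le\sn(\nu)$ for large $p$ is that $p^{-\nu/2}\to0$ when $\nu>0$, with equality when $\nu=0$. It does not follow merely from $p^{-\nu/2}\le1$.
\end{itemize}
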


\begin{proof}
  Averaging \eqref{eq:prime-power-Kl-bound} over the reduced residue
  classes proves \eqref{eq:lambda-power}.

  The upper bound \eqref{eq:lambda-prime-upper} follows from
  \eqref{eq:lambda-prime} when \(p\nmid2a\). If \(p\mid a\), then
  \(\Kl(a\bar h^2;p)=-p^{-1/2}\) for \(h\in\F_p^\times\), so
  \(\lambda_{a,\nu}(p)=p^{-\nu/2}\). For \(\nu>0\), this is at most
  \(\sn(\nu)\) for all sufficiently large \(p\); the finitely many
  remaining primes are absorbed in the error constant. The case
  \(\nu=0\) is exact, and \(p=2\) is covered by
  \eqref{eq:lambda-power}.

  It remains to prove \eqref{eq:lambda-prime}. Suppose that
  \(p\nmid2a\). By \eqref{eq:prime-power-Kl-bound}, we may write
  \[
  \Kl(t;p)=2\cos\theta_{p,t},
  \qquad
  \theta_{p,t}\in[0,\pi]
  \qquad(t\in\F_p^\times).
  \]
  For \(r\geq0\), put
  \[
  \operatorname{sym}_r(\theta)=\frac{\sin((r+1)\theta)}{\sin\theta}.
  \]

  For \(r\geq1\), Katz \cite[Example~13.6]{Katz-1988} proved
  \begin{equation}\label{eq:full-symmetric-power}
   \bigg|\sum_{t\in\F_p^\times}
   \operatorname{sym}_r(\theta_{p,t})\bigg|
   \leq \frac12(r+1)p^{1/2}.
  \end{equation}
  Fouvry and Michel \cite[Lemma~2.1]{FM-2003} established the corresponding
  estimate for square parameters
  \begin{equation}\label{eq:square-symmetric-power}
   \bigg|\sum_{h\in\F_p^\times}
   \operatorname{sym}_r(\theta_{p,\bar h^{2}})\bigg|
   \leq 3(r+1)p^{1/2}.
  \end{equation}

  If \(a\) is a quadratic residue modulo \(p\), multiplication by
  \(a\) permutes the nonzero quadratic residues. Thus
  \[
  \sum_{h\in\F_p^\times}
  \operatorname{sym}_r(\theta_{p,a\bar h^{2}})
  =\sum_{h\in\F_p^\times}
  \operatorname{sym}_r(\theta_{p,\bar h^{2}}).
  \]
  Otherwise, \(h\mapsto\bar h^2\) and \(h\mapsto a\bar h^2\) are
  two-to-one maps onto the nonzero quadratic residues and nonresidues,
  respectively.
  Hence
  \[
  \sum_{h\in\F_p^\times}
  \operatorname{sym}_r(\theta_{p,a\bar h^{2}})
  +\sum_{h\in\F_p^\times}
  \operatorname{sym}_r(\theta_{p,\bar h^{2}})
  =
  2 \sum_{t\in\F_p^\times}
  \operatorname{sym}_r(\theta_{p,t}).
  \]
  It follows from \eqref{eq:full-symmetric-power} and
  \eqref{eq:square-symmetric-power} that, in either case,
  \begin{equation}\label{eq:a-symmetric-power}
  \begin{aligned}
   \bigg|\sum_{h\in\F_p^\times}
   \operatorname{sym}_r(\theta_{p,a\bar h^{2}})\bigg|
   &\leq \bigg|\sum_{h\in\F_p^\times}
   \operatorname{sym}_r(\theta_{p,\bar h^{2}})\bigg|
   +2\bigg|\sum_{t\in\F_p^\times}
   \operatorname{sym}_r(\theta_{p,t})\bigg|\\
   &\leq 4(r+1)p^{1/2},
  \end{aligned}
  \end{equation}
  uniformly in \(a\in\F_p^\times\). The argument of
  \cite[Lemmas~2.2 and 2.3]{FM-2003}, together with
  \eqref{eq:a-symmetric-power}, implies
  \begin{equation}\label{eq:a-ST-discrepancy}
    \sup_{0\leq u<v\leq\pi}
    \left|
    \frac1{p-1}
    \#\left\{
    h\in\F_p^\times:
    u\leq\theta_{p,a\bar h^{2}}\leq v
    \right\}
    -
    \frac2\pi\int_u^v\sin^2t\,dt
    \right|
    \ll p^{-1/8},
  \end{equation}
  uniformly in \(a\in\F_p^\times\), with an absolute implied constant.

  For \(0\leq t\leq\pi\), put
  \[
  F_{p,a}(t)=\frac1{p-1}
  \#\{h\in\F_p^\times:\theta_{p,a\bar h^2}\leq t\}.
  \]
  For fixed \(\nu>0\), integration by parts and
  \eqref{eq:a-ST-discrepancy} imply
  \[
  \begin{aligned}
    \frac1{p-1}\sum_{h\in\F_p^\times}
    |2\cos\theta_{p,a\bar h^2}|^\nu
    &=\int_0^\pi |2\cos t|^\nu\,dF_{p,a}(t)\\
    &=\frac2\pi\int_0^\pi |2\cos t|^\nu\sin^2t\,dt
    +O_\nu(p^{-1/8})\\
    &=\sn(\nu)+O_\nu(p^{-1/8}).
  \end{aligned}
  \]
  This proves \eqref{eq:lambda-prime} and completes the proof.
\end{proof}

\begin{lemma}\label{lem:lifting}
Let \(p\) be a prime, and let \(R\in\Z[X]\) be nonconstant. Suppose
\(p\nmid\lc(R)\disc(R)\). For any integer \(\ell\geq1\) and real
\(\nu\geq0\),
\begin{equation*}
 \sum_{\substack{r\bmod p^{2\ell}\\p^\ell\parallel R(r)}}
 \left|
 \Kl \left(
 a\,\overline{R(r)/p^\ell}^{2};p^\ell
 \right)
 \right|^\nu
 =\rho_R(p)\varphi(p^\ell)\lambda_{a,\nu}(p^\ell).
\end{equation*}
\end{lemma}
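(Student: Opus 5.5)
Let \(r \bmod p^{2\ell}\) satisfy \(p^\ell\parallel R(r)\). Since \(p\nmid\lc(R)\disc(R)\), every root of \(R\) modulo \(p\) is simple. Hence \(p\mid R(r)\) forces \(r\equiv r_0 \pmod p\) for a unique root \(r_0\) of \(R\) modulo \(p\), and \(R'(r_0)\not\equiv0\pmod p\). By Hensel's lemma \(r_0\) lifts to a unique root \(\tilde r\) of \(R\) modulo \(p^{2\ell}\). We will parametrize each such \(r\) as \(r=\tilde r+t\) with \(t\equiv 0\pmod p\), \(t \bmod p^{2\ell}\). Taylor expansion gives \(R(\tilde r+t)\equiv t\,R'(\tilde r)+t^2(\cdots)\pmod{p^{2\ell}}\). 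Since \(p\mid t\), this is \(t\cdot u(t)\) with \(u(t)\equiv R'(\tilde r)\) a unit. So \(v_p(R(r))=v_p(t)\) whenever \(v_p(t)<2\ell\). The condition \(p^\ell\parallel R(r)\) therefore becomes \(t=p^\ell s\) with \(s\) a unit modulo \(p^\ell\), and there are \(\varphi(p^\ell)\) such \(t\) modulo \(p^{2\ell}\).

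Next we compute the Kloosterman argument. With \(t=p^\ell s\) we have
\[
R(r)/p^\ell\equiv s\,R'(\tilde r)+p^\ell s^2(\cdots)\equiv s\,R'(\tilde r)\pmod{p^\ell}.
\]
This is consistent with (K-O) and \eqref{p-property}. Write \(c=R'(\tilde r)\), a unit modulo \(p^\ell\). Then the map \(s\mapsto h:=cs\) is a bijection of \((\Z/p^\ell\Z)^\times\), so the contribution of the root \(r_0\) is
\[
\sideset{}{^*}\sum_{h\bmod p^\ell}|\Kl(a\bar h^2;p^\ell)|^\nu=\varphi(p^\ell)\lambda_{a,\nu}(p^\ell)
\]
by \eqref{eq:lambda}. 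Summing over the \(\rho_R(p)\) roots \(r_0\) gives the identity.

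The only point needing care is the bookkeeping in the Taylor step. The expansion \(R(\tilde r+t)=R(\tilde r)+tR'(\tilde r)+t^2R_2(\tilde r,t)\) with \(R_2\in\Z[X,Y]\) holds over \(\Z\), so no division by factorials occurs. We also use \(R(\tilde r)\equiv0\pmod{p^{2\ell}}\). Since \(p^{2\ell}\mid t^2\), the quotient \(R(r)/p^\ell\) is well defined modulo \(p^\ell\). It depends only on \(r \bmod p^{2\ell}\), because shifting \(r\) by \(p^{2\ell}\) changes \(R(r)\) by a multiple of \(p^{2\ell}\). The Kloosterman sum at modulus \(p^\ell\) depends only on its argument modulo \(p^\ell\), so the summand is well defined. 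Finally, distinct roots \(r_0\) give disjoint sets of \(r\), and every \(r\) with \(p\mid R(r)\) lies over some root, so the count is exhaustive.
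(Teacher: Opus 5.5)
Your proof is correct, but it organizes the count dually to the paper. The paper fixes the target value $u$, a unit modulo $p^\ell$, and notes that $R(X)-p^\ell u$ has the same reduction modulo $p$ as $R$. Applying Hensel's lemma to this shifted polynomial gives exactly $\rho_R(p)$ solutions of $R(r)\equiv p^\ell u \pmod{p^{2\ell}}$. Since this congruence is equivalent to $p^\ell\parallel R(r)$ together with $R(r)/p^\ell\equiv u\pmod{p^\ell}$, the sum reindexes over $u$ at once.

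You instead fix the root $r_0$, lift it once to $\tilde r$, and linearize with the integral Taylor expansion. This gives the explicit formula $R(\tilde r+p^\ell s)/p^\ell\equiv R'(\tilde r)\,s\pmod{p^\ell}$, so on the fiber over each root the value map is a visible bijection onto the units.

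The paper's route is shorter and avoids the Taylor bookkeeping and the auxiliary lift $\tilde r$. Yours is more explicit: it describes the admissible classes as $r=\tilde r+p^\ell s$ with $s$ a unit, and it checks directly that the summand is well defined, a point the paper delegates to \eqref{p-property}. Both arguments use the hypothesis $p\nmid\lc(R)\disc(R)$ in the same way, only to guarantee that the roots of $R$ modulo $p$ are simple.
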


\begin{proof}
Fix an integer \(u\) with \(p\nmid u\). The polynomials
\(R(X)-p^\ell u\) and \(R(X)\) have the same reduction modulo \(p\), and hence share the same roots mod $p$ 
which are all simple by the hypothesis
\(p\nmid\lc(R)\disc(R)\). By Hensel's lemma, each of these
\(\rho_R(p)\) roots, say $r$, lifts uniquely to a solution of
\(R(r)\equiv p^\ell u\,(\text{mod}\,p^{2\ell})\). Thus
\[
 \#\{r\bmod p^{2\ell}:R(r)\equiv p^\ell u\,(\text{mod}\,p^{2\ell})\}
 =\rho_R(p).
\]
Since \(p\nmid u\), the congruence \(R(r)\equiv p^\ell u\,(\text{mod}\,p^{2\ell})\) is equivalent to
\(p^\ell\parallel R(r)\) together with
\(R(r)/p^\ell\equiv u\,(\text{mod}\,p^\ell)\). For the same reason as in \eqref{p-property}, we rewrite the summation over
\(u\bmod p^\ell\) with \(p\nmid u\) to get
\[
\begin{aligned}
 \sum_{\substack{r\bmod p^{2\ell}\\p^\ell\parallel R(r)}}
 \left|\Kl\left(a\,\overline{R(r)/p^\ell}^{\,2};p^\ell\right)\right|^\nu
 &=\rho_R(p)\sideset{}{^*}\sum_{u\bmod p^\ell}
   |\Kl(a\bar u^{\,2};p^\ell)|^\nu\\
 &=\rho_R(p)\varphi(p^\ell)\lambda_{a,\nu}(p^\ell),
\end{aligned}
\]
by \eqref{eq:lambda}.
\end{proof}

\begin{lemma}\label{lem:H-properties}
The function \(H_{a,\bnu}:\N^k\to\R_{\geq0}\) is multiplicative,
in the sense that
\[
 H_{a,\bnu}(\mathbf{de})
 =H_{a,\bnu}(\mathbf d)H_{a,\bnu}(\mathbf e)
\]
when \((d_1\cdots d_k,e_1\cdots e_k)=1\), where \(\mathbf{d}=(d_1,\ldots, d_k)\), \(\mathbf{e}=(e_1,\ldots, e_k)\) and \(\mathbf{de}:=(d_1e_1,\ldots, d_ke_k)\).  Moreover,
\begin{align}\label{eq:H-comparison}
 H_{a,\bnu}(\mathbf d)
 & \le
 \frac{\rho(d_1\cdots d_k)}{d_1\cdots d_k} \prod_{j=1}^k \big(2^{\omega(d_j)}\kappa(v_2(d_j))\big)^{\nu_j}. 
\end{align}
In particular, we have  for \(p^{\boldsymbol\ell}=(p^{\ell_1},\ldots,p^{\ell_k})\) where \({\boldsymbol\ell}=(\ell_1,\ldots,\ell_k)\),
\begin{equation}\label{eq:H-fair-2}
    H_{a,\bnu}(2^{\boldsymbol\ell})\le    2^{\frac52|{\boldsymbol\nu}|} 
    \frac{\rho_{Q}(2^{|{\boldsymbol\ell}|})}{2^{|{\boldsymbol\ell}|}}
    \qquad \mbox{ and }\qquad 
    H_{a,\bnu}(p^{\boldsymbol\ell})\le    2^{|{\boldsymbol\nu}|} 
    \frac{\rho_{Q}(p^{|{\boldsymbol\ell}|})}{p^{|{\boldsymbol\ell}|}}\quad (p \ \textrm{odd}),
  \end{equation}
and for \(\mathbf{d}=\mathbf p_j^\ell:=(1,\ldots,1,p^\ell,1,\ldots,1)\), 
\begin{equation}\label{eq:H-fair}
    H_{a,\bnu}(\boldsymbol{2}_j^\ell)\le    2^{\frac52\nu_j}
    \frac{\rho_{Q_j}(2^\ell)}{2^\ell}
    \qquad \mbox{ and }\qquad 
    H_{a,\bnu}(\mathbf p_j^\ell)
    \le  2^{\nu_j}
    \frac{\rho_{Q_j}(p^\ell)}{p^\ell} \quad (p \ \textrm{odd}).
  \end{equation}  
\end{lemma}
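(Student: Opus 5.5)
Multiplicativity will come from the Chinese remainder theorem, and the bounds from the pointwise Weil--Estermann estimate combined with a count of admissible residue classes.

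\emph{Multiplicativity.} Let \((d_1\cdots d_k,e_1\cdots e_k)=1\). Then \(L(\mathbf{de})=L(\mathbf d)L(\mathbf e)\) with coprime factors, so CRT identifies \(r\bmod L(\mathbf{de})\) with a pair \((r_1\bmod L(\mathbf d),\,r_2\bmod L(\mathbf e))\), where \(r\equiv r_1\) and \(r\equiv r_2\). The condition \(\mathcal P_{\mathbf Q}(\mathbf{de},r)\) is a conjunction of conditions at primes \(p\mid d_1\cdots d_k\) and at primes \(p\mid e_1\cdots e_k\). By \eqref{key} these conditions depend only on \(r_1\) and \(r_2\) respectively, so the condition splits as \(\mathcal P_{\mathbf Q}(\mathbf d,r_1)\wedge\mathcal P_{\mathbf Q}(\mathbf e,r_2)\).

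For the weights, apply the twisted multiplicativity \eqref{eq:Kl-mult} to the modulus \(d_je_j\) with parameter \(b=a\,\overline{Q_j(r)/(d_je_j)}^{\,2}\). The factor modulo \(d_j\) is
\[
\Kl\bigl(a\,\overline{Q_j(r)/(d_je_j)}^{\,2}\,\bar e_j^{\,2};d_j\bigr)=\Kl\bigl(a\,\overline{Q_j(r)/d_j}^{\,2};d_j\bigr),
\]
since \(\overline{Q_j(r)/(d_je_j)}\,\bar e_j\equiv\overline{Q_j(r)/d_j}\pmod{d_j}\). By \eqref{p-property} this depends only on \(r_1\); symmetrically, the factor modulo \(e_j\) depends only on \(r_2\). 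The normalizations \(1/L\) multiply, so \(H_{a,\bnu}(\mathbf{de})=H_{a,\bnu}(\mathbf d)H_{a,\bnu}(\mathbf e)\).

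\emph{Bound \eqref{eq:H-comparison}.} Bound each Kloosterman factor by \eqref{eq:Kl-Weil}, namely \(|\Kl(\cdot;d_j)|\le\kappa(v_2(d_j))2^{\omega(d_j)}\). This pulls out the product \(\prod_j(2^{\omega(d_j)}\kappa(v_2(d_j)))^{\nu_j}\). It remains to show
\[
\frac{1}{L(\mathbf d)}\#\{r\bmod L(\mathbf d):\mathcal P_{\mathbf Q}(\mathbf d,r)\}\le\frac{\rho(d_1\cdots d_k)}{d_1\cdots d_k}.
\]
By multiplicativity of both sides (the density count is the case \(\bnu=0\) of the multiplicativity just proved), it suffices to treat \(\mathbf d=p^{\boldsymbol\ell}\). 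Put \(m=|\boldsymbol\ell|\). If \(\mathcal P\) holds, then \(v_p(Q(r))=\sum_j v_p(Q_j(r))=m\) exactly, so in particular \(Q(r)\equiv0\pmod{p^m}\). This condition depends only on \(r\bmod p^m\), and \(p^m\) divides \(L=p^{2\max\ell_j}\). Hence the number of admissible \(r\bmod L\) is at most
\[
\frac{L}{p^m}\,\rho_Q(p^m),
\]
which after dividing by \(L\) gives the claim.

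This step is where the main care is needed: one must make sure \(p^m\mid L\), that is, \(m\le 2\max_j\ell_j\). That holds in the relevant cases but is not automatic when several \(\ell_j\) are positive. If \(m>2\max\ell_j\), reduce modulo \(\gcd(p^m,L)\) instead. Alternatively, use that for pairwise coprime \(Q_j\), a simultaneous high divisibility of two factors forces \(p\) to divide a resultant, and treat that case separately. I would first try to establish the count directly through the lifting argument: partition \(r\bmod p^{M}\) with \(M=\max(m,2\max\ell_j)\), which preserves the density. Working with the density \(\#/p^{M}\), the inequality follows from
\[
\#\{r\bmod p^M:p^m\mid Q(r)\}=p^{M-m}\rho_Q(p^m),
\]
so no divisibility of \(L\) is needed at all.

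\emph{Consequences.} For the specializations \eqref{eq:H-fair-2}: at \(\mathbf d=p^{\boldsymbol\ell}\) with \(p\) odd, \(\omega(p^{\ell_j})\le1\) and \(\kappa=1\), so the Kloosterman product is at most \(2^{|\bnu|}\). For \(p=2\), use \eqref{eq:prime-power-Kl-bound}, which gives \(2\kappa\le2^{5/2}\), and hence the factor \(2^{\frac52|\bnu|}\). The single-coordinate case \eqref{eq:H-fair} is the same argument with only \(Q_j\) constrained to \(p^\ell\parallel Q_j(r)\) and the other \(Q_i\) coprime to \(p\). There we count via \(\rho_{Q_j}(p^\ell)\), dropping the coprimality conditions on the other factors, which only enlarges the count.
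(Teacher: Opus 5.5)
Your proposal is correct and follows the paper's proof: the Chinese remainder theorem plus twisted multiplicativity give the multiplicativity, and the Weil--Estermann bound plus a comparison of the admissible-class density with $\rho$ gives \eqref{eq:H-comparison}. The paper sidesteps your $p^m\mid L$ concern without reducing to prime powers by lifting directly to the modulus $(d_1\cdots d_k)^2$, which is a common multiple of $L(\mathbf d)$ and $d_1\cdots d_k$; your $p^{M}$ is the local version of this. Your explicit count via $\rho_{Q_j}(p^\ell)$ for \eqref{eq:H-fair} also supplies the refinement from $\rho_Q$ to $\rho_{Q_j}$, which the paper only asserts with ``in particular''.
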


\begin{proof}
Suppose that \((d_1\cdots d_k,e_1\cdots e_k)=1\). Then
\[
L(\mathbf d\mathbf e)=L(\mathbf d)L(\mathbf e),
\qquad (L(\mathbf d),L(\mathbf e))=1.
\]
Furthermore,
\[
\mathcal P_{\mathbf Q}(\mathbf d\mathbf e,r)
\quad\Longleftrightarrow\quad
\mathcal P_{\mathbf Q}(\mathbf d,r)
\ \land\
\mathcal P_{\mathbf Q}(\mathbf e,r).
\]
Hence, by the Chinese remainder theorem, the admissible residue classes modulo
\(L(\mathbf d\mathbf e)\) are in bijection with pairs of admissible residue
classes modulo \(L(\mathbf d)\) and \(L(\mathbf e)\). We can thus split the summation in \eqref{def-H} for \(H_{a,\bnu}(\mathbf{de})\).

For \(p\mid d_1\cdots d_k\), we have
\(v_p(d_je_j)=v_p(d_j)\) for \(1\leq j\leq k\).
For an admissible class $r \bmod L(\mathbf{de})$, \(Q_j(r)/d_j=e_jQ_j(r)/(d_je_j)\), so
\[
 \overline{Q_j(r)/(d_je_j)}^{\,2}\bar e_j^{\,2}
 \equiv \overline{Q_j(r)/d_j}^{\,2}\,(\text{mod}\,d_j).
\]
The analogous
argument holds when \(\mathbf d\) and \(\mathbf e\) are interchanged.
Therefore, the twisted
multiplicativity formula \eqref{eq:Kl-mult} gives
\[
 \Kl\!\left(a\,\overline{Q_j(r)/(d_je_j)}^{\,2};d_je_j\right)
 =
 \Kl\!\left(a\,\overline{Q_j(r)/d_j}^{\,2};d_j\right)
 \Kl\!\left(a\,\overline{Q_j(r)/e_j}^{\,2};e_j\right).
\]
The factors on the right depend only on \(r\bmod L(\mathbf d)\)
and \(r\bmod L(\mathbf e)\), respectively. We can then conclude that \(H_{a,\bnu}(\mathbf d\mathbf e)=     
H_{a,\bnu}(\mathbf d)H_{a,\bnu}(\mathbf e)\)
if  \((d_1\cdots d_k, e_1\cdots e_k)=1\).

For \eqref{eq:H-comparison}, the bound \eqref{eq:Kl-Weil} implies
\[
\begin{aligned}
 \prod_{j=1}^k
\left|\Kl\!\left(a\,\overline{Q_j(r)/d_j}^{\,2};d_j\right)\right|^{\nu_j}
& \leq \prod_{j=1}^k \big(2^{\omega(d_j)}\kappa(v_2(d_j))\big)^{\nu_j},  
\end{aligned}
\]
and thus
\begin{align}\label{eq0}
\frac{ H_{a,\bnu}(\mathbf d)}{\prod_{j=1}^k \big(2^{\omega(d_j)}\kappa(v_2(d_j))\big)^{\nu_j}}
 &\le
 \frac{\#\{r\bmod L(\mathbf d):\mathcal P_{\mathbf Q}(\mathbf d,r)\}}
      {L(\mathbf d)}.
\end{align}
As \(L(\mathbf d)| (d_1\cdots d_k)^2\),  the densities of admissible residues classes modulo \(L(\mathbf d)\) and \((d_1\cdots d_k)^2\) are the same. For an admissible class $r$, we have \(d_j\mid Q_j(r)\) for \(1\leq j\leq k\), so \(d_1\cdots d_k\mid Q(r)\). Altogether, we infer that
\[
\begin{aligned}
 \frac{\#\{r\bmod L(\mathbf d):\mathcal P_{\mathbf Q}(\mathbf d,r)\}}
      {L(\mathbf d)}
 &=\frac{\#\{r\bmod(d_1\cdots d_k)^2:
                \mathcal P_{\mathbf Q}(\mathbf d,r)\}}
           {(d_1\cdots d_k)^2}\\
 &\leq\frac{\#\{r\bmod(d_1\cdots d_k)^2:
                   d_1\cdots d_k\mid Q(r)\}}
              {(d_1\cdots d_k)^2}\\
 &=\frac{\rho(d_1\cdots d_k)}{d_1\cdots d_k}.
\end{aligned}
\]
With  \eqref{eq0} and the definition of $\kappa(\cdot)$ (under  
\eqref{eq:Kl-Weil}), our assertion \eqref{eq:H-comparison} follows readily.
\end{proof}

Let \(R,S\in\Z[X]\) be two nonconstant polynomials. The following facts are well-known. First, the discriminant \(\disc (R)\) is nonzero if and only if all roots of \(R\) are simple. Second, the resultant \(\Res(R,S)\) is nonzero if and only if \(R\) and \(S\) have no common roots. Moreover, we have the standard identity
\begin{equation}\label{eq:discriminant-product}
  \disc(RS)=\disc(R)\disc(S)\Res(R,S)^2.
\end{equation}
Applying them to our case $Q= Q_1\cdots Q_k$,  for \(p\nmid D=\disc(Q)\), we get \(p\nmid\Res(Q_i,Q_j)\) for \(i\ne j\). Thus,
\begin{enumerate}
    \item \(Q_1,\ldots, Q_k\) have simple roots \(\bmod p\);
    \item any pair of \(Q_1,\ldots, Q_k\)  have no common root modulo \(p\).
\end{enumerate}
   This yields a strong constraint to the non-vanishing of \(H_{a,\bnu}(\mathbf d)\), as shown below.
\begin{lemma}\label{lem:local-majorant}
  Assume \(p\nmid D\) and  let \({\boldsymbol\ell}=(\ell_1,\ldots,\ell_k)\in \N_0^k\). 

  Case (1): 
    \(\ell_i,\ell_j>0\) for some $i\neq j$:  
  \[
  H_{a,\bnu}(p^{\ell_1},\ldots,p^{\ell_k})=0.
  \]
  Consequently, for \(\mathbf d\in\N^k\) satisfying
\((d_1\cdots d_k,D)=1\), \(H_{a,\bnu}(\mathbf d)\neq 0\) can happen only when \((d_i,d_j)=1\) for all $i\neq j$, and in this case, 
  \begin{equation}\label{eq:H-majorant-coprime-D}
    H_{a,\bnu}(\mathbf d) = \prod_{j=1}^k H_{a,\bnu}(1,\ldots,d_j,\ldots,1)=  \prod_{j=1}^k \prod_{p^\ell \| d_j} H_{a,\bnu}(1,\ldots,p^\ell,\ldots,1).
  \end{equation}

  Case (2): \(\ell_i=0\) for all \(i\neq j\): 
  \begin{equation}\label{eq:H-good}
    H_{a,\bnu}(1,\ldots,p^\ell,\ldots,1)
    = (1-p^{-1})\rho_{Q_j}(p)
    \frac{\lambda_{a,\nu_j}(p^\ell)}{p^\ell}
    \qquad \mbox{if \(p\nmid\lc(Q)\)D}.
  \end{equation}

\end{lemma}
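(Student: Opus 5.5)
For Case (1), I will argue directly from the definition \eqref{def-H}. Suppose \(\ell_i,\ell_j>0\) with \(i\neq j\). Then any admissible class \(r\) modulo \(L(p^{\boldsymbol\ell})\) satisfying \(\mathcal P_{\mathbf Q}(p^{\boldsymbol\ell},r)\) has \(p\mid Q_i(r)\) and \(p\mid Q_j(r)\). Thus \(r\bmod p\) is a common root of \(Q_i\) and \(Q_j\) modulo \(p\), which contradicts property (2) listed before the lemma, since \(p\nmid D\) forces \(p\nmid\Res(Q_i,Q_j)\). So the sum in \eqref{def-H} is empty and \(H_{a,\bnu}(p^{\boldsymbol\ell})=0\).

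For the consequence, I will write \(\mathbf d=\prod_{p}p^{\boldsymbol\ell(p)}\) componentwise. The multiplicativity in Lemma~\ref{lem:H-properties} gives \(H_{a,\bnu}(\mathbf d)=\prod_p H_{a,\bnu}(p^{\boldsymbol\ell(p)})\). If \((d_i,d_j)>1\) for some \(i\ne j\), then some prime \(p\nmid D\) has two positive coordinates, and that factor vanishes. Otherwise each \(p^{\boldsymbol\ell(p)}\) has exactly one nontrivial coordinate. Regrouping the primes by coordinate, and applying multiplicativity once more to the pairwise coprime vectors \((1,\ldots,d_j,\ldots,1)\), yields both equalities in \eqref{eq:H-majorant-coprime-D}.

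For Case (2), I will unfold the definition with \(\mathbf d=\mathbf p_j^\ell\), so that \(L(\mathbf d)=p^{2\ell}\). The condition \(\mathcal P_{\mathbf Q}(\mathbf d,r)\) then says \(p^\ell\parallel Q_j(r)\) and \(p\nmid Q_i(r)\) for \(i\neq j\). The main point is that the second condition is automatic. Indeed, \(p\mid Q_j(r)\) and \(p\mid Q_i(r)\) would again give a common root modulo \(p\), which is impossible since \(p\nmid D\). Hence
\[
H_{a,\bnu}(\mathbf p_j^\ell)=\frac1{p^{2\ell}}\sum_{\substack{r\bmod p^{2\ell}\\ p^\ell\parallel Q_j(r)}}\left|\Kl\left(a\,\overline{Q_j(r)/p^\ell}^{\,2};p^\ell\right)\right|^{\nu_j}.
\]

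To evaluate this sum I will apply Lemma~\ref{lem:lifting} with \(R=Q_j\). For this I need \(p\nmid\lc(Q_j)\disc(Q_j)\). This follows from \(p\nmid\lc(Q)D\): the leading coefficient of \(Q_j\) divides \(\lc(Q)\), and \(\disc(Q_j)\) divides \(D\) by iterating \eqref{eq:discriminant-product}. The lemma then gives the sum as \(\rho_{Q_j}(p)\varphi(p^\ell)\lambda_{a,\nu_j}(p^\ell)\). Dividing by \(p^{2\ell}\) and using \(\varphi(p^\ell)/p^{2\ell}=(1-p^{-1})/p^\ell\) gives \eqref{eq:H-good}.

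The only step needing care is this verification of the hypotheses of Lemma~\ref{lem:lifting}, namely that \(Q_j\) is nonconstant and the divisibility of \(\disc(Q_j)\) into \(D\). I expect no real obstacle there.
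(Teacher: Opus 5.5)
Your proposal is correct and follows essentially the same route as the paper: Case~(1) by the common-root contradiction from $p\nmid\Res(Q_i,Q_j)$, the factorization by the multiplicativity in Lemma~\ref{lem:H-properties}, and Case~(2) by Lemma~\ref{lem:lifting} applied to $R=Q_j$. You also check explicitly that the side conditions $p\nmid Q_i(r)$ for $i\ne j$ are automatic in Case~(2), which is a small gain in rigor, since the paper drops these conditions from its displayed sum without comment.
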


\begin{proof}
Suppose \(\ell_i,\ell_j\ge 1\) for
  \(i\ne j\). Write  \(p^{\boldsymbol\ell} = (p^{\ell_1},\ldots,p^{\ell_k})\). The condition
  \(\mathcal P_{\mathbf Q}(p^{\boldsymbol\ell},r)\) would
  require \(p^{\ell_i}\mid Q_i(r)\) and \(p^{\ell_j}\mid Q_j(r)\), so \(Q_i\) and \(Q_j\) have a common root mod \(p\), as $p\nmid D$.  This contradicts (2) above. 
  Hence \(\mathcal P_{\mathbf Q}(p^{\boldsymbol\ell},r)\) does not hold for any \(r\),  implying \(H_{a,\bnu}(p^{\boldsymbol\ell})=0\). 
  
  Using the multiplicativity shown in Lemma~\ref{lem:H-properties},  \( H_{a,\bnu}(\mathbf d)\) decomposes into   
  \[H_{a,\bnu}(\mathbf d)= \prod_p H_{a,\bnu}(p^{{\boldsymbol\ell}_p})\]
  for some ${\boldsymbol\ell}_p\in \N_0^k$. If $(d_i,d_j)>1$ for some $i\neq j$, then some of $H_{a,\bnu}(p^{{\boldsymbol\ell}_p})$'s vanish.  The two equalities in \eqref{eq:H-majorant-coprime-D} follow from the multiplicativity, and we need to note the use of $(d_i,d_j)=1$ for all $i\neq j$ in the first equality.

  Suppose that \(\ell_j=\ell\geq1\) and \(\ell_i=0\) for \(i\ne j\).
  The condition \(\mathcal P_{\mathbf Q}(p^{\boldsymbol\ell},r)\)
  then reduces to \(p^\ell\parallel Q_j(r)\) and \(p\nmid Q_i(r)\) for \(i\ne j\).  Moreover, by (1), the roots of \(Q_j\) modulo \(p\) are simple, thus by Hensel's lemma, \(\rho_{Q_j}(p^\ell)=\rho_{Q_j}(p)\).
  When \(p\nmid\lc(Q)\) as well, 
  \(p\nmid\lc(Q_j)\disc(Q_j)\) and so  Lemma~\ref{lem:lifting} applies. We therefore infer that 
  \[
  \begin{aligned}
    H_{a,\bnu}(1,\ldots,p^\ell,\ldots,1)
    &=\frac1{p^{2\ell}}
      \sum_{\substack{r\bmod p^{2\ell}\\p^\ell\parallel Q_j(r)}}
      \left|\Kl\!\left(a\,\overline{Q_j(r)/p^\ell}^{\,2};p^\ell\right)\right|^{\nu_j}\\
    &=\rho_{Q_j}(p) \frac{\varphi(p^\ell)\lambda_{a,\nu_j}(p^\ell)}{p^{2\ell}}.
  \end{aligned}
  \]
  This proves \eqref{eq:H-good}.
\end{proof}

To control the contribution of prime powers in the following lemma,
we use Stewart's estimate \cite[Corollary~2 and (44)]{Stewart-1991}
\begin{equation}\label{eq:Stewart}
 \rho_R(p^\ell)
 \leq d\,p^{\lfloor(1-1/d)\ell\rfloor}
 \qquad(\ell\geq1),
\end{equation}
where \(d=\deg R\) and \(R\in\{Q,Q_1,\ldots,Q_k\}\).

\begin{lemma}\label{lem:T-Delta} 
Let \(T_{a,\bnu,F}(\mathbf d)=  G_F(\mathbf d)H_{a,\bnu}(\mathbf d)\) and \(\Delta_p(a,\bnu,F)\) be defined as in \eqref{eq:T-Delta-def}. 
\begin{enumerate}
\item  \(T_{a,\nu,F}\) is submultiplicative, and  
\begin{align}\label{eq:moderate}
 \sum_{|\boldsymbol\ell|\geq s}T_{a,\nu,F}(p^{\boldsymbol\ell})
 &\ll_{g,k,A,B} \eta_2(p)^{|\bnu|} p^{-\max(1,s/(4g))}\nonumber\\ &\ll_{\bnu,g,k,A,B} p^{-\max(1,s/(4g))},
\end{align}
where \(\eta_2(p)= 2^{5/2}\) if \(p=2\), or \(2\) otherwise. 
\item 
\[\Delta_p(a,\bnu,F)  = \left(1+O(p^{-2})\right)
 \Big(1+\sum_{1\leq|\boldsymbol\ell|<8g}
 T_{a,\nu,F}(p^{\boldsymbol\ell})\Big) 
\]
where the $O$-term denotes a nonnegative quantity and the implied $O$-constant depends only on \(\bnu,g,k,A,B\). 
\end{enumerate}
Hence  
    \begin{equation}\label{eq:Delta}
        1\le \Delta_p(a,\bnu,F) \le  1+Cp^{-1}
    \end{equation}     
for some positive constant $C$ depending only on  \(\bnu,g,k,A,B\). 
\end{lemma}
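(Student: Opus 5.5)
Submultiplicativity of \(T_{a,\bnu,F}\) comes first. By Lemma~\ref{lem:H-properties}, \(H_{a,\bnu}\) is multiplicative on coprime products. By \eqref{submultiplicativity}, \(G_F(\mathbf{de})\le G_F(\mathbf d)G_F(\mathbf e)\) when \((d_1\cdots d_k,e_1\cdots e_k)=1\). Multiplying the two relations gives \(T_{a,\bnu,F}(\mathbf{de})\le T_{a,\bnu,F}(\mathbf d)T_{a,\bnu,F}(\mathbf e)\).

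For the tail \eqref{eq:moderate}, combine \eqref{eq:standard-majorant-bound} with \eqref{eq:H-fair-2}. Using the first branch \(A^{\Omega}\) of the minimum gives
\[
 T_{a,\bnu,F}(p^{\boldsymbol\ell})\le A^{|\boldsymbol\ell|}\,\eta_2(p)^{|\bnu|}\,\frac{\rho_Q(p^{|\boldsymbol\ell|})}{p^{|\boldsymbol\ell|}} .
\]
Stewart's bound \eqref{eq:Stewart} gives \(\rho_Q(p^m)\le g\,p^{(1-1/g)m}\), so a term with \(|\boldsymbol\ell|=m\) is at most \(g\,\eta_2(p)^{|\bnu|}(A p^{-1/g})^m\). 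The number of \(\boldsymbol\ell\) with \(|\boldsymbol\ell|=m\) is \(\binom{m+k-1}{k-1}\ll_k m^{k-1}\).

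For \(p\) larger than a constant \(p_0(A,g,k)\), the ratio \(Ap^{-1/g}\) is small. The series \(\sum_{m\ge s} m^{k-1}(Ap^{-1/g})^m\) is then dominated by its first term, of size \(\ll p^{-s/g}\). This handles large \(s\). It is not enough for small \(s\), since \(p^{-s/g}\) may exceed \(p^{-1}\), so those terms need a separate argument. For \(m\le g\) I would use \(\rho_Q(p^m)\le \rho_Q(p)\cdot p^{m-1}\)-type bounds, which are cruder but available. More simply, the trivial estimate \(\rho_Q(p^m)/p^m\le \rho_Q(p)/p\le g/p\) holds, because every root mod \(p^m\) reduces to a root mod \(p\) and each root mod \(p\) has at most \(p^{m-1}\) lifts.

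I would therefore split the range into \(m<4g\) and \(m\ge 4g\). For \(m<4g\), bound each term by \(A^{4g}\eta_2^{|\bnu|}\,g/p\). There are \(O_{g,k}(1)\) such \(\boldsymbol\ell\), which gives \(O(p^{-1})\), and this matches \(p^{-\max(1,s/(4g))}=p^{-1}\) whenever \(s\le 4g\). For \(m\ge 4g\), I would switch to the second branch of the minimum, \(B p^{\varepsilon m}\), with \(\varepsilon\le 1/(100g^2)\) say. Each term is then at most \(Bg\,\eta_2^{|\bnu|}p^{-m/g+\varepsilon m}\le Bg\,\eta_2^{|\bnu|}p^{-m/(2g)}\). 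Summing over \(m\ge \max(s,4g)\) with the \(m^{k-1}\) multiplicity gives \(\ll p^{-\max(s,4g)/(4g)}\) uniformly in \(p\ge2\). The surplus factor \(p^{-m/(4g)}\) absorbs \(m^{k-1}\), and \(\max(s,4g)/(4g)\ge\max(1,s/(4g))\). Small primes need no special treatment, because the \(B p^{\varepsilon m}\) branch makes everything uniform and the implied constants are allowed to depend on \(g,k,A,B\). The main obstacle is exactly this bookkeeping: choosing the cutoff so that the two branches of \eqref{eq:standard-majorant-bound} together cover all \(p\) and all \(m\) uniformly in \(a\) and in the coefficients of \(Q\). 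Uniformity in \(a\) is automatic from \eqref{eq:H-fair-2}.

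For part (2), write
\[
 \Delta_p=\Big(1+\sum_{1\le|\boldsymbol\ell|<8g}T\Big)+\sum_{|\boldsymbol\ell|\ge 8g}T .
\]
By \eqref{eq:moderate} with \(s=8g\), the last sum is \(\ll p^{-2}\). It is nonnegative, and the first bracket is at least \(1\). Factoring out the bracket therefore gives the multiplicative form \(1+O(p^{-2})\) with a nonnegative error. Finally, \eqref{eq:Delta} follows: the lower bound \(\Delta_p\ge1\) holds because every \(T\) is nonnegative, and the upper bound is \eqref{eq:moderate} with \(s=1\).
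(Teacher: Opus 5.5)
Your proposal is correct and follows the paper's proof essentially step for step. You get submultiplicativity from \eqref{submultiplicativity} and the multiplicativity of \(H_{a,\bnu}\). You bound each term via \eqref{eq:standard-majorant-bound} and \eqref{eq:H-fair-2}, using the \(A^u\) branch when \(|\boldsymbol\ell|<4g\) and the \(Bp^{\varepsilon u}\) branch when \(|\boldsymbol\ell|\ge 4g\); this only needs \(\varepsilon\le 1/(2g)\), which the standing hypothesis gives. Part (2) then follows by taking \(s=8g\). The only cosmetic difference is that you obtain \eqref{eq:Delta} directly from \eqref{eq:moderate} with \(s=1\), whereas the paper combines part (2) with the termwise bound \eqref{eq:se}; both work.
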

\begin{proof}
    Let \(T(\mathbf d):=T_{a,\nu,F}(\mathbf d)\).
The facts that  \(T(\mathbf1_k)=1\) and 
\[ T(\mathbf a\mathbf b)\leq T(\mathbf a)T(\mathbf b) \quad \mbox{ for \((a_1\cdots a_k,b_1\cdots b_k)=1\)}
\] 
follow from  \eqref{submultiplicativity} and the multiplicativity of  \(H_{a,\bnu}\).
This justifies the submultiplicativity of \(T\).

For \(|\boldsymbol\ell|=u\geq1\),  \eqref{eq:standard-majorant-bound} and \eqref{eq:H-fair-2}
 imply
\[
 T(p^{\boldsymbol\ell})
 \ll \eta_2(p)^{|\bnu|}\min(A^u,Bp^{\varepsilon u})\frac{\rho(p^u)}{p^u}.
\]
Using \(\rho(p^u)\leq p^{u-1}\rho(p)\leq gp^{u-1}\) and
Stewart's estimate \eqref{eq:Stewart}, we get
\begin{align}\label{eq:se}
 T(p^{\boldsymbol\ell})
 \ll_{g} \eta_2(p)^{|\bnu|}
 \min(A^u,Bp^{\varepsilon u})\min(p^{-1},p^{-u/g}).    
\end{align}
There are \(\binom{u+k-1}{k-1}\) vectors
\(\boldsymbol\ell\) with \(|\boldsymbol\ell|=u\).
For \(1\leq u<4g\), the factor \(A^u\) is bounded in terms of
\(A,g\). For \(u\geq4g\), the assumption
\(\varepsilon\leq1/(2g)\) and the bound
\(\binom{u+k-1}{k-1}2^{-u/(4g)}\ll_{g,k}1\) imply
\[
 \binom{u+k-1}{k-1}Bp^{-(1/g-\varepsilon)u}
 \ll_{g,k,B}p^{-u/(4g)}.
\]
Summation over \(u\geq s\) therefore proves
\begin{equation*}
 \sum_{|\boldsymbol\ell|\geq s}T(p^{\boldsymbol\ell})
 \ll_{g,k,A,B}\eta_2(p)^{|\bnu|} p^{-\max(1,s/(4g))}.
\end{equation*}

As  
\[
\Delta_p(a,\bnu,F) =   1+\sum_{|\boldsymbol\ell|\geq1}T(p^{\boldsymbol\ell}),
\]
we take \(s=8g\) and get from the nonnegativity of \(T\) that
\[
\begin{aligned}
 \Delta_p(a,\bnu,F) &= 1+ \sum_{1\leq|\boldsymbol\ell|<8g}
 T(p^{\boldsymbol\ell}) + O(p^{-2})\\
 & = \left(1+O(p^{-2})\right)
 \Big(1+\sum_{1\leq|\boldsymbol\ell|<8g}
 T(p^{\boldsymbol\ell})\Big)
\end{aligned} 
\]
with nonnegative $O$-terms, where the implied $O$-constants depend on \(\bnu,g,k,A,B\) only. 

We apply \eqref{eq:se} to $T(p^{\boldsymbol\ell})$ for $|\boldsymbol\ell|< 8g$, thus
\[
1\le  \Delta_p(a,\bnu,F)  = \left(1+O(p^{-2})\right)\left(1+O(p^{-1})\right)\le 1+ C p^{-1}
\]
where the constant $C>0$ depends only on \(\bnu,g,k,A,B\).
\end{proof}

\section{Sieve and stability lemmas}

For the sieve estimates below and the proof of
Theorem~\ref{thm:main}, we fix \(0<\alpha<1\) and set
\begin{equation}\label{eq:epsilons}
 \varepsilon_1=\frac{3\alpha}{25},\qquad
 \varepsilon_2=\frac{\alpha}{25},\qquad
 \varepsilon_3=\frac{\alpha}{50g}.
\end{equation}

We begin with a sieve estimate for polynomial values along arithmetic
progressions.

\begin{lemma}\label{lem:polynomial-sieve}
	Let \(Q\in\Z[X]\) be primitive of degree \(g\geq1\), let
	\(M\in\N\), \(r\in\Z\), and suppose that
	\[
	 x^\alpha<y\leq x,\qquad
	 M\leq x^{2\varepsilon_1},\qquad
	 2\leq z\leq x^{\varepsilon_3}.
	\]
	Then we have
	\[
		\sum_{\substack{\ x<n\leq x+y\\
				n\equiv r\,({\rm mod}\,M)\\
				p\mid Q(n),\ p\nmid M\ \Longrightarrow\ p>z}}
		1
		\ll
		\frac yM\prod_{\substack{g<p\leq z\\p\nmid M}}
		\left(1-\frac{\rho_Q(p)}p\right),
	\]
	where the implied constant depends only on \(g\).
\end{lemma}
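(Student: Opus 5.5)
This is a standard upper-bound sieve (Brun or Selberg), applied to the sequence $\mathcal A=\{Q(n): x<n\le x+y,\ n\equiv r\ (\mathrm{mod}\ M)\}$ and sifted by the primes $g<p\le z$ with $p\nmid M$.

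First, discard the primes $p\le g$ entirely. They contribute no constraint to the product, and dropping a sifting condition only enlarges the count. So we bound the count by
\[
S(\mathcal A,\mathcal P,z)=\#\{n:\ (Q(n),P(z))=1\},\qquad P(z)=\prod_{g<p\le z,\ p\nmid M}p .
\]
For squarefree $d\mid P(z)$, note that $(d,M)=1$. The Chinese remainder theorem then shows that the conditions $n\equiv r\ (\mathrm{mod}\ M)$ and $d\mid Q(n)$ define exactly $\rho_Q(d)$ classes modulo $dM$. Hence
\[
\#\mathcal A_d=\frac{y}{M}\,\frac{\rho_Q(d)}{d}+R_d,\qquad |R_d|\le\rho_Q(d)\le g^{\omega(d)}.
\]

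Next, check the sieve axioms for the density $\rho_Q(p)/p$. Since $Q$ is primitive, $\rho_Q(p)\le g<p$ for every $p>g$, so $1-\rho_Q(p)/p\ge 1/(g+1)$ is bounded away from $0$. The density also has dimension at most $g$ in the sense
\[
\prod_{w\le p<w'}\Bigl(1-\frac{\rho_Q(p)}p\Bigr)^{-1}\le\Bigl(\frac{\log w'}{\log w}\Bigr)^{g}\Bigl(1+\frac{K}{\log w}\Bigr),
\]
with $K$ depending only on $g$. This follows from $\rho_Q(p)\le g$ together with Mertens' theorem, so no prime ideal theorem is needed. These are exactly the hypotheses of the fundamental lemma, or of Selberg's upper-bound sieve, in dimension $\kappa=g$. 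For example, one can use Halberstam--Richert, Theorem 2.2 or 4.1. The sieve gives, for any level $\mathcal D\ge z$,
\[
S(\mathcal A,\mathcal P,z)\ll_g\frac yM\prod_{\substack{g<p\le z\\ p\nmid M}}\Bigl(1-\frac{\rho_Q(p)}p\Bigr)+\sum_{\substack{d\le\mathcal D\\ d\mid P(z)}}3^{\omega(d)}|R_d| .
\]

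Finally, choose the level $\mathcal D=z^{s}$ with a fixed $s$, or simply $\mathcal D=y^{1/2}/M$. The remainder sum is then at most $\sum_{d\le\mathcal D}(3g)^{\omega(d)}\ll\mathcal D(\log\mathcal D)^{3g}$. The main step is to verify that this is negligible against the main term. The main term is at least
\[
\frac yM\,(\log z)^{-g}\gg_g \frac yM\,(\log x)^{-g},
\]
using the dimension bound and $z\ge2$. With $y>x^\alpha$, $M\le x^{2\varepsilon_1}=x^{6\alpha/25}$ and $z\le x^{\alpha/(50g)}$, we have $y/M\ge x^{19\alpha/25}$. Taking $\mathcal D=z^{2}\le x^{\alpha/(25g)}$ gives
\[
\mathcal D(\log x)^{3g}\ll \frac yM\,x^{-\alpha/2}\ll\frac yM(\log x)^{-g}\ \text{(up to constants)},
\]
which is comfortably smaller than the main term. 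With $\mathcal D=z^2$, i.e.\ sifting ratio $s=2$, the fundamental lemma only guarantees a bounded multiplicative constant depending on $g$. That is all the claim requires, and the implied constant depends only on $g$ since every sieve input does. Alternatively, the Brun--Titchmarsh-type upper bound in Selberg's sieve with level $\mathcal D=(y/M)^{1/2}$ gives the same conclusion directly. The only delicate point is making sure the sieve constant depends on $g$ alone and not on $Q$ or $M$, which holds because $\rho_Q(p)\le g$ and $1-\rho_Q(p)/p\ge 1/(g+1)$ are the only inputs.
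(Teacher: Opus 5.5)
Your proposal is correct and follows essentially the paper's route: an upper-bound sieve of dimension $g$ applied to $Q(Mm+r)$, with $|R_d|\le\rho_Q(d)$, a level equal to a fixed power of $z$, and the observation $y/M\ge x^{19\alpha/25}\ge z^{38g}$, which makes the remainder negligible. The only difference is the sieve input: the paper uses the fundamental lemma (Corollary~6.2 of Iwaniec--Kowalski) at level $D_0=z^{20g}$ because that lemma needs $\log D_0/\log z>9g+1$, so at your level $z^2$ you must rely on Selberg's sieve (or Halberstam--Richert's Theorem~2.2) rather than on ``the fundamental lemma'', and you should compare remainder and main term in terms of $z$, namely $z^2(\log z)^{3g}$ against $z^{38g}(\log z)^{-g}$, so that the implied constant depends only on $g$ and not on $\alpha$.
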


\begin{proof}
Write \(\rho=\rho_Q\) and \(n=Mm+r\). The interval for \(m\) is
\[
 I=\left(\frac{x-r}{M},\frac{x+y-r}{M}\right],\qquad
 X=|I|=\frac yM>x^{\alpha-2\varepsilon_1}=x^{19\alpha/25}.
\]
The case \(z\leq g\) follows from
\(\#(I\cap\Z)\leq X+1\). We may thus suppose that \(z>g\), and define
\[
 \mathfrak P_g=\prod_{\substack{g<p\leq z\\p\nmid M}}p.
\]
The sum in the lemma does not exceed
\[
 \#\{m\in I\cap\Z:(Q(Mm+r),\mathfrak P_g)=1\}.
\]

For \(d\mid\mathfrak P_g\), the congruence \(Q(Mm+r)\equiv0\, (\text{mod}\,d)\) has \(\rho(d)\) solutions modulo \(d\), since \((d,M)=1\). Hence
\begin{equation}\label{eq:sieve-remainder}
 \#\{m\in I\cap\Z:d\mid Q(Mm+r)\}
 =X\frac{\rho(d)}d+R_d,
 \qquad |R_d|\leq\rho(d).
\end{equation}

Since \(\rho(p)\leq g\), we have 
\[
 \left(1-\frac{\rho(p)}p\right)^{-1}
 \leq\left(1-\frac1p\right)^{-g}
       \bigl(1+O_g(p^{-2})\bigr)
       \quad \mbox{for \(p>2g\)},
\]
or  \(\le g+1\) for \(g<p\leq2g\).
It follows by Mertens' theorem that
\begin{equation}\label{eq:sieve-dimension-axiom}
 \prod_{\substack{w\leq p<v\\p>g,\ p\nmid M}}
 \left(1-\frac{\rho(p)}p\right)^{-1}
 \ll_g\left(\frac{\log v}{\log w}\right)^g
 \qquad(2\leq w<v),
\end{equation}

We take \(D_0=z^{20g}\). Since \(\log D_0/\log z=20g>9g+1\),
the upper bound in
\cite[Corollary~6.2]{IK-2004}, together with
\eqref{eq:sieve-remainder} and \eqref{eq:sieve-dimension-axiom}, yields
\[
 \#\{m\in I\cap\Z:(Q(Mm+r),\mathfrak P_g)=1\}
 \ll_g X  \prod_{\substack{g<p\leq z\\p\nmid M}}
 \left(1-\frac{\rho(p)}p\right)
 +\sum_{\substack{d\leq D_0\\d\mid\mathfrak P_g}} \rho(d).
\]
Since \(d\mid\mathfrak P_g\) is squarefree, we have
\(\rho(d)=\prod_{p\mid d}\rho(p)\leq g^{\omega(d)}\). Hence, we get
\[
 \sum_{\substack{d\leq D_0\\d\mid\mathfrak P_g}}\rho(d)
 \leq\sum_{d\leq D_0}g^{\omega(d)}
 \ll_g D_0(\log D_0)^{g-1}.
\]
Since \(X>x^{19\alpha/25}\geq z^{38g}\) and \(D_0=z^{20g}\),
it follows from \eqref{eq:sieve-dimension-axiom} that
\[
 X\prod_{\substack{g<p\leq z\\p\nmid M}}
 \left(1-\frac{\rho(p)}p\right)
 \gg_g X(\log z)^{-g}
 \gg_g D_0(\log D_0)^{g-1}.
\]
This proves the lemma.
\end{proof}

The next lemma is an analogue of the upper bound in
\cite[Lemma~6]{Henriot-2012} with Kloosterman weights under the
corrected congruence condition of
\cite[pp.~375--376]{Henriot-2014-erratum}.

\begin{lemma}\label{lem:weighted-sieve}
Let \(x^\alpha<y\leq x\) and \(z=x^{\varepsilon_3}\).
Then, uniformly for \(\mathbf d\in\N^k\) with
\(d_1\cdots d_k\leq x^{\varepsilon_1}\), we have
\[
\sum_{\substack{x<n\leq x+y\\
                  \mathcal P_{\mathbf Q}(\mathbf d,n)\\
                  p\mid Q(n),\ p\nmid d_1\cdots d_k\Rightarrow p>z}}
 \prod_{j=1}^k
 \left|\Kl\!\left(
 a\,\overline{Q_j(n)/d_j}^{\,2};d_j\right)\right|^{\nu_j}\ll yH_{a,\bnu}(\mathbf d)
 \prod_{\substack{g<p\leq z\\p\nmid d_1\cdots d_k}}
 \left(1-\frac{\rho(p)}p\right),
\]
where the implied constant depends only on \(g\).
\end{lemma}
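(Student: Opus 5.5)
The plan is to split the sum according to the residue class of \(n\) modulo \(L=L(\mathbf d)\), exactly as in the definition \eqref{def-H} of \(H_{a,\bnu}\). By the key observation \eqref{key}, the condition \(\mathcal P_{\mathbf Q}(\mathbf d,n)\) depends only on \(n\bmod L\). By \eqref{p-property}, each Kloosterman factor \(\Kl(a\,\overline{Q_j(n)/d_j}^{\,2};d_j)\) also depends only on \(n \bmod L\). Writing \(r\) for an admissible class, the left-hand side therefore equals
\[
\sum_{\substack{r\bmod L\\ \mathcal P_{\mathbf Q}(\mathbf d,r)}}
\prod_{j=1}^k\left|\Kl\!\left(a\,\overline{Q_j(r)/d_j}^{\,2};d_j\right)\right|^{\nu_j}
\;\#\Bigl\{x<n\le x+y:\ n\equiv r\ (\mathrm{mod}\ L),\ p\mid Q(n),\ p\nmid d_1\cdots d_k\Rightarrow p>z\Bigr\},
\]
where the Kloosterman weight has been pulled out of the inner count because it is constant on the class. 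We have dropped the condition \(\mathcal P_{\mathbf Q}\) inside the count, which is harmless since it is automatic for \(n\equiv r\).

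Next apply Lemma~\ref{lem:polynomial-sieve} to each inner count with \(M=L\). The hypotheses hold because \(L\le (d_1\cdots d_k)^2\le x^{2\varepsilon_1}\) and \(z=x^{\varepsilon_3}\). The lemma sifts by primes \(p\nmid M\); since \(p\nmid L\) is equivalent to \(p\nmid d_1\cdots d_k\), the sifting condition and the Euler product \(\prod_{g<p\le z,\ p\nmid d_1\cdots d_k}(1-\rho(p)/p)\) match exactly. Each inner count is thus
\[
\ll_g \frac{y}{L}\prod_{\substack{g<p\le z\\ p\nmid d_1\cdots d_k}}\Bigl(1-\frac{\rho(p)}p\Bigr),
\]
uniformly in \(r\). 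Summing this over the admissible \(r\) against the Kloosterman weights gives precisely \(y\,H_{a,\bnu}(\mathbf d)\) times the product, by \eqref{def-H}.

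The only point needing care is checking that Lemma~\ref{lem:polynomial-sieve} applies uniformly, with no hidden dependence on \(r\) or on \(L\). This holds because that lemma's constant depends only on \(g\) and it allows arbitrary \(r\) and \(M\le x^{2\varepsilon_1}\). Two further remarks: primitivity of \(Q\) is assumed throughout, and weights with \(\nu_j=0\) or \(d_j=1\) are simply \(1\). Nonnegativity of the weights lets us bound termwise without any cancellation issues, so I expect no genuine obstacle beyond this bookkeeping.
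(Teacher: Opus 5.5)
Your proposal is correct and follows the paper's proof essentially verbatim: decompose by admissible classes modulo \(L(\mathbf d)\) using \eqref{key} and \eqref{p-property}, apply Lemma~\ref{lem:polynomial-sieve} with \(M=L\le x^{2\varepsilon_1}\), and reassemble via \eqref{def-H}. Your explicit remark that \(p\nmid L\) is equivalent to \(p\nmid d_1\cdots d_k\) is the one detail the paper leaves implicit.
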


\begin{proof}
Write \(L=L(\mathbf d)\). 
By \eqref{key} and \eqref{p-property}, we may write the sum on the left-side into
\begin{equation}\label{eq:sum-rm}
 \sum_{\substack{0\leq r<L\\
                  \mathcal P_{\mathbf Q}(\mathbf d,r)}}
 \prod_{j=1}^k
 \left|\Kl\!\left(
 a\,\overline{Q_j(r)/d_j}^{\,2};d_j
 \right)\right|^{\nu_j}
 \sum_{\substack{x<n\leq x+y\\
                  n\equiv r\,(\text{mod}\,L)\\
                  p\mid Q(n),\ p\nmid L\Rightarrow p>z}}1.
\end{equation}

Since \(L\leq(d_1\cdots d_k)^2\leq x^{2\varepsilon_1}\),
Lemma~\ref{lem:polynomial-sieve} applies with \(M=L\). Thus the inner sum  in \eqref{eq:sum-rm} is
\[
 \ll_g\frac yL
 \prod_{\substack{g<p\leq z\\p\nmid L}}
 \left(1-\frac{\rho(p)}p\right).
\]
Inserting this bound in \eqref{eq:sum-rm}, we obtain
the desired estimate by the definition of \(H_{a,\bnu}(\mathbf d)\).
\end{proof}

\begin{lemma}\label{lem:weighted-sieve-ell}
Let \(x^\alpha<y\leq x\). Suppose that \(\mathbf d\in\N^k\),
\(r\in\N\), and a prime \(q\nmid d_1\cdots d_k\) satisfy
\[
 q\leq x^{\varepsilon_3},\qquad
 d_1\cdots d_k\,q^r\leq x^{\varepsilon_1}.
\]
Then we have
\[
\sum_{\substack{\ x<n\leq x+y\\
                  \mathcal P_{\mathbf Q}(\mathbf d,n)\\
                  q^r\mid Q(n)\\
                  p\mid Q(n),\
                  p\nmid qd_1\cdots d_k\Rightarrow p>q}}
 \prod_{j=1}^k
 \left|\Kl \left(
 a\,\overline{Q_j(n)/d_j}^{\,2};d_j\right)\right|^{\nu_j}\ll y\frac{\rho(q^r)}{q^r}H_{a,\bnu}(\mathbf d)
 \prod_{\substack{g<p\leq q\\p\nmid qd_1\cdots d_k}}
 \left(1-\frac{\rho(p)}p\right),
\]
where the implied constant depends only on \(g\).
\end{lemma}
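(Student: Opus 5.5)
The plan is to copy the proof of Lemma~\ref{lem:weighted-sieve}, adding the extra congruence condition \(q^r\mid Q(n)\) to the modulus. Write \(L=L(\mathbf d)\). Since \(q\nmid d_1\cdots d_k\), we have \((L,q^r)=1\). By \eqref{key} and \eqref{p-property}, the Kloosterman weight and the condition \(\mathcal P_{\mathbf Q}(\mathbf d,n)\) depend only on \(n\bmod L\). The condition \(q^r\mid Q(n)\) depends only on \(n\bmod q^r\). So the Chinese remainder theorem lets us split the sum over classes \(n\equiv s\pmod{Lq^r}\), where \(s\bmod L\) runs over the admissible classes \(r_0\) and \(s\bmod q^r\) over the \(\rho(q^r)\) roots of \(Q\) modulo \(q^r\). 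The left side is then at most
\[
 \sum_{\substack{r_0\bmod L\\ \mathcal P_{\mathbf Q}(\mathbf d,r_0)}}
 \prod_{j=1}^k\left|\Kl\left(a\,\overline{Q_j(r_0)/d_j}^{\,2};d_j\right)\right|^{\nu_j}
 \sum_{\substack{t\bmod q^r\\ Q(t)\equiv0\,(q^r)}}
 \sum_{\substack{x<n\leq x+y\\ n\equiv s(r_0,t)\,(\mathrm{mod}\,Lq^r)\\ p\mid Q(n),\ p\nmid Lq\Rightarrow p>q}}1 .
\]

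Next apply Lemma~\ref{lem:polynomial-sieve} with \(M=Lq^r\) and \(z=q\). Its hypotheses hold: \(M\leq (d_1\cdots d_k)^2q^{2r}\leq x^{2\varepsilon_1}\), and \(q\leq x^{\varepsilon_3}\). If \(q<2\) cannot occur, the lower condition \(z\geq2\) is automatic. The sifting condition in that lemma is "\(p\mid Q(n)\), \(p\nmid M\Rightarrow p>z\)". Since \(p\nmid M\) is the same as \(p\nmid qd_1\cdots d_k\), this matches our condition exactly. So the inner sum is
\[
\ll_g \frac{y}{Lq^r}\prod_{\substack{g<p\leq q\\ p\nmid qd_1\cdots d_k}}\left(1-\frac{\rho(p)}p\right).
\]
Summing over the \(\rho(q^r)\) classes \(t\) gives the factor \(\rho(q^r)/q^r\). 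Summing the weights over \(r_0\) and dividing by \(L\) gives \(H_{a,\bnu}(\mathbf d)\) by \eqref{def-H}. Together these yield the claimed bound.

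There is no real obstacle. The only points to check are the coprimality \((L,q)=1\), needed for the CRT splitting and for the weights to be unaffected by the \(q\)-part, and that the sifting set of primes matches the one in Lemma~\ref{lem:polynomial-sieve}. One should also note that the product over primes \(p\leq q\) with \(p\nmid M\) automatically excludes \(p=q\), consistent with the stated product.
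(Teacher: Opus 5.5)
Your proposal is correct and follows essentially the same route as the paper. Both arguments decompose into classes modulo $M=L(\mathbf d)q^r$, using that the Kloosterman weight and $\mathcal P_{\mathbf Q}(\mathbf d,n)$ depend only on $n \bmod L(\mathbf d)$. Both then apply Lemma~\ref{lem:polynomial-sieve} with $z=q$, noting $\rad M=\rad(qd_1\cdots d_k)$, and use the Chinese remainder theorem to count the $\rho(q^r)$ lifts of each admissible class, which yields $\rho(q^r)q^{-r}H_{a,\bnu}(\mathbf d)$. The only difference is cosmetic: you split via the Chinese remainder theorem before sieving, whereas the paper sums over $t \bmod M$ first and counts the lifts afterward.
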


\begin{proof}
Write \(L=L(\mathbf d)\) and \(M=Lq^r\). By \eqref{key} and \eqref{p-property}, the condition
\(\mathcal P_{\mathbf Q}(\mathbf d,n)\) and the product of absolute
powers of Kloosterman sums depend only on \(n\bmod L\).
Since \(\rad L=\rad(d_1\cdots d_k)\) and \(\rad M=\rad(qd_1\cdots d_k)\), we decompose the sum into
\[
 \sum_{\substack{t\bmod M\\
                  \mathcal P_{\mathbf Q}(\mathbf d,t)\\
                  q^r\mid Q(t)}}
 \prod_{j=1}^k
 \left|\Kl\left(a\,\overline{Q_j(t)/d_j}^{\,2};d_j\right)\right|^{\nu_j}
 \sum_{\substack{x<n\leq x+y\\
                  n\equiv t\,(\text{mod}\,M)\\
                  p\mid Q(n),\ p\nmid M\Rightarrow p>q}}1.
\]

Since \(M\leq(d_1\cdots d_k\,q^r)^2\leq x^{2\varepsilon_1}\),
Lemma~\ref{lem:polynomial-sieve} with \(z=q\) shows that the inner sum is
\[
 \ll_g\frac yM
 \prod_{\substack{g<p\leq q\\p\nmid M}}
 \left(1-\frac{\rho(p)}p\right).
\]
Fix \(u\bmod L\) for which \(\mathcal P_{\mathbf Q}(\mathbf d,u)\) holds.
Since \((L,q)=1\) and \(M=q^rL\), the Chinese remainder theorem yields that there are exactly
\(\rho(q^r)\) residue classes \(t\bmod M\) satisfying
\(t\equiv u\,(\text{mod}\,L)\) and \(q^r\mid Q(t)\). Moreover, \eqref{p-property} holds for $(r,r')=(t,n)$, thus
\begin{align*}
 \frac1M
 \sum_{\substack{t\bmod M\\
                  \mathcal P_{\mathbf Q}(\mathbf d,t)\\
                  q^r\mid Q(t)}}
 \prod_{j=1}^k
 \left|\Kl\left(a\,\overline{Q_j(t)/d_j}^{\,2};d_j\right)\right|^{\nu_j}
 &\qquad=\frac{\rho(q^r)}{Lq^r}
 \sum_{\substack{u\bmod L\\
                  \mathcal P_{\mathbf Q}(\mathbf d,u)}}
 \prod_{j=1}^k
 \left|\Kl\left(a\,\overline{Q_j(u)/d_j}^{\,2};d_j\right)\right|^{\nu_j}
 \\
 &\qquad=\frac{\rho(q^r)}{q^r}H_{a,\bnu}(\mathbf d).
\end{align*}
This proves the lemma.
\end{proof}

\begin{lemma}\label{lem:stability}
Let \(F\in\mathcal M_k(A,B,\varepsilon)\), where
\(0<\varepsilon\leq1/(2g)\), and define
\[
 W(\mathbf d)=F(\mathbf d)H_{a,\bnu}(\mathbf d).
\]
Then the following estimates hold.
\begin{enumerate}[label=(\roman*)]
\item If \(\theta:\N\to\R_{\geq0}\) is multiplicative and
      \(\theta(p^\ell)=1+O(p^{-1})\) uniformly in \(p\) and
      \(\ell\geq1\), then
      \[
      \sum_{d_1\cdots d_k\leq z}W(\mathbf d)\theta(d_1\cdots d_k)
      \ll\sum_{d_1\cdots d_k\leq z}W(\mathbf d)
      \qquad(z\geq1).
      \]
      The same estimate holds with both summation ranges replaced by
      \(P^+(d_1\cdots d_k)\leq z\).
\item If \(\theta\) is as in \textup{(i)}, \(\chi>0\),
      \(z\geq\e^{8g\chi}\), and \(\sigma=\chi/\log z\), then
      \[
      \sum_{P^+(d_1\cdots d_k)\leq z}
      W(\mathbf d)\theta(d_1\cdots d_k)(d_1\cdots d_k)^\sigma
      \ll\sum_{P^+(d_1\cdots d_k)\leq z}W(\mathbf d).
      \]
\item For \(z\geq1\),
      \[
      \sum_{P^+(d_1\cdots d_k)\leq z}W(\mathbf d)
      \asymp\sum_{d_1\cdots d_k\leq z}W(\mathbf d).
      \]
\end{enumerate}
The implied constants depend only on \(\bnu,g,k,A,B\), and in
\textup{(i)} and \textup{(ii)} also on the uniform constant in the
hypothesis on \(\theta\). The constant in \textup{(ii)} may
additionally depend on \(\chi\).
\end{lemma}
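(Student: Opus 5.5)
The three parts will all be proved from the same input: the product structure of \(W\) at each prime, together with the tail bound \eqref{eq:moderate}. Since \(H_{a,\bnu}\) is multiplicative (Lemma~\ref{lem:H-properties}) and \(F\) satisfies \eqref{submultiplicativity}, we can factor off a block. Write \(\mathbf d=\mathbf d'\mathbf e\), where \(\mathbf e\) collects the part of \(\mathbf d\) supported on primes in a set \(\mathcal T\), and \(\mathbf d'\) is coprime to it. Then
\[
W(\mathbf d)\le T_{a,\bnu,F}(\mathbf e)\,W(\mathbf d').
\]
Grouping \(\mathbf d\) by its \(p\)-components and using \eqref{eq:moderate}, each local factor is controlled:
\[
\sum_{|\boldsymbol\ell|\ge1}T(p^{\boldsymbol\ell})\ll p^{-1},
\qquad
\sum_{|\boldsymbol\ell|\ge s}T(p^{\boldsymbol\ell})\ll p^{-s/(4g)}.
\]

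\emph{Part (i).} Write \(\theta=1*\beta\) multiplicatively, with \(\beta(p^\ell)=\theta(p^\ell)-\theta(p^{\ell-1})=O(p^{-1})\). Since \(\theta\ge0\), it is simpler to bound \(\theta(n)\le\prod_{p\mid n}(1+C/p)\), which is \(\sum_{m\mid \rad n}\mu^2(m)(C^{\omega(m)}/m)\). Then
\[
\sum_{d_1\cdots d_k\le z}W(\mathbf d)\theta(\cdot)\le \sum_{m\ \mathrm{sqfree}}\frac{C^{\omega(m)}}{m}\sum_{\substack{d_1\cdots d_k\le z\\ m\mid d_1\cdots d_k}}W(\mathbf d).
\]
Split \(\mathbf d=\mathbf e\mathbf d'\) with \(\rad(e_1\cdots e_k)=m\) and \((d'_1\cdots d'_k,m)=1\). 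Bound \(W(\mathbf d)\le T(\mathbf e)W(\mathbf d')\), and drop the size constraint on \(\mathbf e\) except \(d'_1\cdots d'_k\le z\). This gives
\[
\ll\sum_m\frac{C^{\omega(m)}}{m}\prod_{p\mid m}\Big(\sum_{|\boldsymbol\ell|\ge1}T(p^{\boldsymbol\ell})\Big)\sum_{d'\le z}W(\mathbf d')
\ll\prod_p\big(1+C'p^{-2}\big)\sum_{d\le z}W.
\]
The \(P^+\) version is identical, since removing \(\mathbf e\) preserves \(P^+\le z\).

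\emph{Part (ii).} Here the weight \(n^\sigma\) must be handled. Write \((d_1\cdots d_k)^\sigma=\prod_{p^{u}\| d_1\cdots d_k}p^{u\sigma}\), and absorb it into a multiplicative \(\theta'\). For primes \(p\le z\) and small \(u\) (say \(u<8g\)), we have \(p^{u\sigma}=1+O(u\sigma\log p)\). This is not \(O(1/p)\), so (i) does not apply directly. Instead, compare with the Euler product
\[
\prod_{p\le z}\Big(1+\sum_{\boldsymbol\ell}T(p^{\boldsymbol\ell})p^{|\boldsymbol\ell|\sigma}\Big)
\]
using Rankin's trick in reverse. The contributions with \(|\boldsymbol\ell|\ge 2\) are summable: \(p^{|\boldsymbol\ell|\sigma}\le p^{|\boldsymbol\ell|/(8g)}\) because \(z\ge \e^{8g\chi}\) gives \(\sigma\le 1/(8g)\). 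Against \(p^{-|\boldsymbol\ell|/(4g)}\) this gives convergence, and the same factorization as in (i) handles them. The remaining issue is the first-power terms, for which \(\sum_{p\le z}T(\mathbf p_j)(p^\sigma-1)\ll \sum_{p\le z}\sigma\log p/p\ll\chi\) by Mertens. So the factor \(\prod_{p\le z}(1+T(\mathbf p_j)(p^\sigma-1))\) is \(\ll \e^{O(\chi)}\).

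To convert this to a bound by \(\sum_{P^+\le z}W\) itself, I will not use a crude Euler-product upper bound, because \(\sum W\) has no lower Euler product in general. Instead, I will use the elementary identity \(n^\sigma\le \prod_{p^u\|n}(1+(p^{u\sigma}-1))\). Expanding, I get \(\sum_{m\mid n}h(m)\) with \(h\) multiplicative and \(h(p^u)=p^{u\sigma}-1\). The argument of (i) then goes through with the weight \(h(m)T(\cdot)\) in place of \(C^{\omega(m)}/m\), and the final convergent product is \(\exp(O(\chi))\). \emph{This is the step I expect to be the main obstacle:} making the first-power terms \(\sum_p T(\mathbf p_j^1)(p^\sigma-1)\) bounded. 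It needs \(T(\mathbf p_j^1)\ll 1/p\), from \eqref{eq:moderate} with \(s=1\), and Mertens up to \(z\).

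\emph{Part (iii).} The lower bound \(\gg\) is trivial, since \(d_1\cdots d_k\le z\) implies \(P^+\le z\). For the upper bound, apply Rankin's trick. For \(P^+(d)\le z\), split according to whether \(d_1\cdots d_k\le z\) or not. For \(d_1\cdots d_k>z\), use \(1\le (d_1\cdots d_k/z)^\sigma\) with \(\sigma=\chi/\log z\), which gives \(\e^{-\chi}\sum_{P^+\le z}W\,(d_1\cdots d_k)^\sigma\). By (ii) with \(\theta\equiv1\), this is \(\le C(\chi)\e^{-\chi}\sum_{P^+\le z}W\). To make the coefficient less than \(1/2\) and absorb it, I need \(C(\chi)=\e^{c\chi}\) with \(c<1\), which the constant from (ii) does not obviously give. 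So I will replace absorption by an iteration in the standard way. For \(z\) smaller than \(\e^{8g\chi}\) the statement is trivial, since both sums are bounded. Alternatively, I will compare \(\sum_{d\le z}W\) with \(\sum_{d\le z^{1/2}}\) via a Hildebrand-type recursion. I would try the absorption first, tracking that the first-order terms in (ii) contribute only \(\exp(\chi\sum_p T(\mathbf p_j)\log p/(p\log z))\), which is \(\e^{c\chi}\) with \(c\) bounded by a constant depending on \(k\) and \(\bnu\).
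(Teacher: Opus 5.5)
\textbf{Parts (i) and (ii).} These are correct. They are also more self-contained than the paper, which only transfers Henriot's Lemmas~2--3. Bounding \(\theta(n)\le\sum_{m\mid\rad n}\mu^2(m)C^{\omega(m)}/m\), splitting off \(\mathbf e\) with \(W(\mathbf e\mathbf d')\le T(\mathbf e)W(\mathbf d')\), and using \(\sum_{|\boldsymbol\ell|\ge1}T(p^{\boldsymbol\ell})\ll p^{-1}\) gives (i) with a factor \(\prod_p(1+O(p^{-2}))\). The unitary expansion \(n^\sigma=\sum_{m\parallel n}h(m)\), \(h(p^u)=p^{u\sigma}-1\), gives (ii).

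One correction in (ii). The terms with \(2\le|\boldsymbol\ell|<4g\) are \emph{not} summable against \(p^{-|\boldsymbol\ell|/(4g)}\). Uniformly in \(D\) you only have \(\sum_{|\boldsymbol\ell|=u}T(p^{\boldsymbol\ell})\ll p^{-1}\), since at \(p\mid D\) the density \(\rho(p^u)/p^u\) can be \(\asymp p^{-1}\). Treating them separately would therefore cost a non-uniform factor \(\prod_{p\mid D}(1+c/p)\). Your \(h\)-weight is what rescues this: for \(p\le z\) and \(u\le 8g\) one has \(h(p^u)\le u\sigma(\log p)\e^{u\chi}\). So every bounded-\(|\boldsymbol\ell|\) term is \(\ll_{\chi,g}\sigma(\log p)/p\), and only \(|\boldsymbol\ell|>8g\) needs \eqref{eq:moderate}.

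\textbf{The genuine gap is (iii).} Absorbing the Rankin tail at the same scale \(z\) cannot work.
\begin{itemize}
\item Your estimate \(p^\sigma-1\ll\sigma\log p\) loses a factor \(\e^{\chi}\) on \(p\le z\).
\item Even granting \(C(\chi)\le \e^{c\chi}\), absorption needs \(c<1\). But in the model case \(C(\chi)\approx\exp\bigl(\kappa\int_0^1(\e^{\chi v}-1)\,dv/v\bigr)\ge \e^{\kappa\chi}\), where \(\sum_jT(\mathbf p_j^1)\approx\kappa/p\) and \(\kappa\) is unbounded over the class.
\item More decisively, no same-scale inequality ``tail \(\le\frac12\) total'' is true. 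Take \(k=1\), \(Q=X\), \(\bnu=0\), \(F=\kappa^{\omega}\), so \(W(d)=\kappa^{\omega(d)}\varphi(d)/d^2\). Then \(\sum_{P^+(d)\le z}W/\sum_{d\le z}W\to \e^{\gamma\kappa}\Gamma(\kappa+1)\), which exceeds \(2\) for \(\kappa\ge2\). Most of the smooth sum lives on \(d>z\).
\end{itemize}
Your fallback (an unspecified iteration, or a comparison of \(\sum_{d\le z}\) with \(\sum_{d\le z^{1/2}}\)) is not carried out. Comparing sharp-cutoff sums at two scales is exactly what is awkward for a merely submultiplicative \(W\).

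\textbf{The missing idea: change scale on the smooth side.} Let \(C_1\) be the constant of (ii) with \(\theta\equiv1\), \(\chi=1\). Fix \(K\) with \(\e^{-K}C_1\le\frac12\) and put \(w=z^{1/K}\).

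Step 1 (Rankin at level \(w\)). If \(w\ge\e^{8g}\), Rankin's trick with \(\sigma=1/\log w\) gives
\[
\sum_{\substack{P^+(d_1\cdots d_k)\le w\\ d_1\cdots d_k>z}}W(\mathbf d)\le z^{-\sigma}C_1\sum_{P^+(d_1\cdots d_k)\le w}W(\mathbf d)=\e^{-K}C_1\sum_{P^+(d_1\cdots d_k)\le w}W(\mathbf d).
\]
Hence \(\sum_{P^+\le w}W\le2\sum_{d_1\cdots d_k\le z}W\).

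Step 2 (return from \(w\) to \(z\)). Split \(\mathbf d=\mathbf a\mathbf b\) into its \(w\)-smooth and \(w\)-rough parts. Then \(W(\mathbf a\mathbf b)\le W(\mathbf a)T(\mathbf b)\) and \eqref{eq:Delta} give
\(\sum_{P^+\le z}W\le\prod_{w<p\le z}(1+Cp^{-1})\sum_{P^+\le w}W\ll K^{C}\sum_{P^+\le w}W\).

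Step 3 (small \(z\)). For \(z<\e^{8gK}\) the smooth sum is \(O_K(1)\), while \(\sum_{d_1\cdots d_k\le z}W\ge W(\mathbf1_k)=1\).

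These are exactly the ingredients the paper invokes for (iii): part (ii) with \(\theta\equiv1\), \(\chi=1\), together with the normalization \(W(\mathbf1_k)=1\).
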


\begin{proof}
Let \(T(\mathbf d)=G_F(\mathbf d)H_{a,\bnu}(\mathbf d)\).
Suppose that \((a_1\cdots a_k,b_1\cdots b_k)=1\). By 
\eqref{submultiplicativity} and the multiplicativity of  \(H_{a,\bnu}\), we have 
\begin{equation}\label{eq:weighted-submultiplicativity}
 W(\mathbf a\mathbf b)\leq T(\mathbf a)W(\mathbf b).
\end{equation}
We also have \(W\leq T\) and \(W(\mathbf1_k)=T(\mathbf1_k)=1\).

Our assertion (i) is a close analogue of \cite[Lemma~2]{Henriot-2012}, whose proof and its correction in \cite[p.~376]{Henriot-2014-erratum} rely only on inequalities as in 
\eqref{eq:weighted-submultiplicativity} and \eqref{eq:moderate}.
Therefore we can prove \textup{(i)} using the argument in \cite{Henriot-2012}. 
The summations with ranges over  \(P^+(d_1\cdots d_k)\leq z\) remain convergent because
\[
 \sum_{P^+(d_1\cdots d_k)\leq z}W(\mathbf d)
 \leq\prod_{p\leq z}\bigg(1+
       \sum_{\boldsymbol\ell\ne\mathbf0}T(p^{\boldsymbol\ell})\bigg)
 <\infty
\]
by \eqref{eq:moderate}.   
 These summations supported on  
\(P^+(d_1\cdots d_k)\leq z\) can be handled in the same fashion since they are dominated by the same Euler products over \(p\leq z\).
By Lemma~\ref{lem:T-Delta} (1), we see that 
 the sums over integers with bounded prime factors
converge,

For \textup{(ii)}, the inequalities in \eqref{eq:weighted-submultiplicativity}
remain valid after multiplying \((d_1\cdots d_k)^\sigma\) to \(W(\mathbf d)\) and \(T(\mathbf d)\). Since \(\theta(p^\ell)\ll1\)
uniformly for \(\ell\geq1\), we also have
\[
 \sum_{|\boldsymbol\ell|\geq s}
 T(p^{\boldsymbol\ell})\theta(p^{|\boldsymbol\ell|})
 \ll p^{-\max(1,s/(4g))}
 \qquad(s\geq1).
\]
To apply the corrected proof of \cite[Lemma~3]{Henriot-2012},
we note that the hypothesis on \(z\) ensures
\(\sigma\leq1/(8g)\), and hence
\[
 \sum_{s>8g}p^{(\sigma-1/(4g))s}
 \ll_g p^{-1-1/(8g)}.
\]
Thus the contribution from \(s>8g\) is summable over the primes.
The corrected argument and \textup{(i)} therefore yield
\[
\begin{aligned}
 &\sum_{P^+(d_1\cdots d_k)\leq z}
 W(\mathbf d)\theta(d_1\cdots d_k)(d_1\cdots d_k)^\sigma\\
 &\qquad\ll\sum_{P^+(d_1\cdots d_k)\leq z}
 W(\mathbf d)\theta(d_1\cdots d_k)
 \ll\sum_{P^+(d_1\cdots d_k)\leq z}W(\mathbf d).
\end{aligned}
\]

Finally, we apply the argument of \cite[Lemma~5]{Henriot-2012} and \textup{(ii)} with \(\theta=1\), \(\chi=1\) and the normalization \(W(\mathbf1_k)=1\) to derive
\textup{(iii)}, based on the same spirit that the counterparts of \eqref{eq:weighted-submultiplicativity} and \eqref{eq:moderate} are the only concerned ingredients for the proof of \cite[Lemma~4]{Henriot-2012}.

The dependency of the implied constants is clear from the arguments of our proof. 
\end{proof}

\vskip 5mm
\section{Proof of Theorem \ref{thm:main}}

We follow the decomposition of Nair and Tenenbaum
\cite[Section~4]{NT-1998} and Henriot
\cite[Section~5]{Henriot-2012}.
All implied constants in this section depend at most on
\(\bnu,g,k,\alpha,\delta,A,B\).
We may assume that
\[
 \varepsilon=\frac{\alpha}{100g(g+\delta^{-1})}.
\]
Indeed, \(M_k(A,B,\varepsilon') \subset M_k(A,B,\varepsilon)\) if $0< \varepsilon'<\varepsilon$. With this choice, the parameters in \eqref{eq:epsilons} satisfy
\begin{equation}\label{eq:epsilon-budget}
 2(g+\delta^{-1})\varepsilon
 =\varepsilon_3=\frac{\varepsilon_2}{2g},
 \qquad \varepsilon_1=3\varepsilon_2.
\end{equation}
In particular, \(\varepsilon\leq1/(2g)\), as required in
Lemma~\ref{lem:stability}.
Let \(z=x^{\varepsilon_3}\), and choose \(c_0\) so that \(2^gc_0^{-1/\delta}\leq1\).
For \(c_0\le x<n\leq x+y\leq2x\), we then have
\begin{equation}\label{eq:Q-size-uniform}
 |Q(n)|\leq\|Q\|(2x)^g
 \leq2^gc_0^{-1/\delta}x^{g+\delta^{-1}}
 \leq x^{g+\delta^{-1}}.
\end{equation}

For \(x<n\leq x+y\) with \(Q(n)\neq0\), write
\(|Q(n)|=d_nb_n\), where \(d_n\) is maximal subject to
\[
 d_n\leq x^{\varepsilon_1},\qquad
 P^+(d_n)<P^-(b_n).
\]
Define 
\[
 d_{jn}=\prod_{p\mid d_n}p^{v_p(Q_j(n))},
 \qquad b_{jn}=\frac{|Q_j(n)|}{d_{jn}}
 \qquad(1\leq j\leq k).
\]
Then \(d_n=d_{1n}\cdots d_{kn}\),
\(b_n=b_{1n}\cdots b_{kn}\), \((d_n,b_n)=1\), and
\(\mathcal P_{\mathbf Q}(\mathbf d_n,n)\) holds where \(\mathbf d_n =(d_{1n},\ldots,d_{kn})\).
Let \(\eta_0>2g\) be a constant to be chosen below. We divide the
integers \(n\) into the five classes
\[
\begin{aligned}
 (C_1) \quad& b_n>1,\quad P^-(b_n)>z;\\
 (C_2)\quad& d_n\leq x^{\varepsilon_2},\quad P^-(b_n)\leq z;\\
 (C_3)\quad& d_n>x^{\varepsilon_2},\quad
             P^+(d_n)>\eta_0,\quad P^-(b_n)\leq z;\\
 (C_4)\quad& d_n>x^{\varepsilon_2},\quad
             P^+(d_n)\leq\eta_0,\quad P^-(b_n)\leq z;\\
 (C_5)\quad& b_n=1.
\end{aligned}
\]
Let \(S_i\) denote the contribution of \(C_i\) to the left-hand side of
\eqref{eq:main-theorem}.

\vskip 2mm

\subsection{Estimation of $S_1$}
For \(n\in C_1\), we have \(b_n\geq P^-(b_n)^{\Om(b_n)}>z^{\Om(b_n)}\). The estimate \eqref{eq:Q-size-uniform} gives
\[
 \Om(b_n)\leq\frac{\log b_n}{\log z} \leq\frac{\log|Q(n)|}{\log z}
 \leq\frac{g+\delta^{-1}}{\varepsilon_3}.
\]
Since \((d_n,b_n)=1\), it follows from \eqref{eq:Mclass} that
\[
 F(|Q_1(n)|,\ldots,|Q_k(n)|)
 \leq A^{\Om(b_n)}F(\mathbf d_n)
 \ll F(\mathbf d_n).
\]
For each \(j\), the factorization \(|Q_j(n)|=d_{jn}b_{jn}\)
and the twisted multiplicativity \eqref{eq:Kl-mult} yield
\[
 \Kl(a;|Q_j(n)|)
 =\Kl(a\bar b_{jn}^{\,2};d_{jn})
  \Kl(a\bar d_{jn}^{\,2};b_{jn}).
\]
Applying the bound \eqref{eq:Kl-Weil}, we get
\[
 \prod_{j=1}^k
 |\Kl(a\bar d_{jn}^{\,2};b_{jn})|^{\nu_j}
 \leq 2^{\frac32|\bnu|+\sum_{j=1}^k\nu_j\omega(b_{jn})}
 \leq 2^{(\frac32+\Om(b_n))|\bnu|}
 \ll1.
\]
Combining these estimates, we obtain
\[
F(|Q_1(n)|,\ldots,|Q_k(n)|)
 \prod_{j=1}^k|\Kl(a;|Q_j(n)|)|^{\nu_j}\ll F(\mathbf d_n)
 \prod_{j=1}^k
 \left|\Kl\left(a\,\overline{Q_j(n)/d_{jn}}^{\,2};d_{jn}\right)
 \right|^{\nu_j},
\]
which implies 
\[
 S_1\ll
 \sum_{d_1\cdots d_k\leq x^{\varepsilon_1}}F(\mathbf d)
 \sum_{\substack{x<n\leq x+y\\
                  \mathcal P_{\mathbf Q}(\mathbf d,n)\\
                  p\mid Q(n),\ p\nmid d_1\cdots d_k\Rightarrow p>z}}
 \prod_{j=1}^k
 \left|\Kl\left(a\,\overline{Q_j(n)/d_j}^{\,2};d_j\right)
 \right|^{\nu_j}.
\]
We apply Lemma~\ref{lem:weighted-sieve} to the inner sum and obtain
\[
 S_1\ll y\sum_{d_1\cdots d_k\leq x^{\varepsilon_1}}W(\mathbf d)
 \prod_{\substack{g<p\leq z\\p\nmid d_1\cdots d_k}}
 \left(1-\frac{\rho(p)}p\right).
\]
By \eqref{eq:sieve-dimension-axiom}, the inner product satisfies 
\begin{equation}\label{eq:finiteprod-pho}
	\begin{aligned}
		\prod_{\substack{g<p\le z\\p\nmid d_1\cdots d_k}}
		\left(1-\frac{\rho(p)}p\right)
		&=V_g(x)
		\prod_{\substack{z<p\le x\\p>g}}
		\left(1-\frac{\rho(p)}p\right)^{-1}
		\prod_{\substack{g<p\le z\\p\mid d_1\cdots d_k}}
		\left(1-\frac{\rho(p)}p\right)^{-1}\\
		&\ll V_g(x)
		\left(\frac{\log x}{\log z}\right)^g\theta(d_1\cdots d_k)\\
		&\ll V_g(x)\theta(d_1\cdots d_k),
	\end{aligned}
\end{equation}
where \(\theta(m)=(m/\varphi(m))^g\) and 
\begin{equation}\label{eq:Vg}
 V_g(t):=\prod_{g<p\leq t}\left(1-\frac{\rho(p)}p\right).   
\end{equation} 
We deduce that 
\[
 S_1\ll y V_g(x)\sum_{d_1\cdots d_k\leq x}W(\mathbf d)\theta(d_1\cdots d_k).
\]
Since $\theta(m)$ is multiplicative and \(\theta(p^\ell)=1+O_g(p^{-1})\)
uniformly for \(\ell\geq1\), applying the assertion (i) in Lemma~\ref{lem:stability} gives
\begin{equation}\label{eq:S1}
 S_1\ll yV_g(x)\sum_{d_1\cdots d_k\leq x}W(\mathbf d).
\end{equation}

\subsection{Estimation of $S_3$}
For \(n\in C_3\), let \(\ell_n=P^+(d_n)\) and write $d_{jn}=\ell_n^{r_{jn}}c_{jn}$ with $\ell_n\nmid c_{jn}\, (1\leq j\leq k)$. We denote $r_n=r_{1n}+\cdots+r_{kn}$.
Thus
\[
 \frac{x^{\varepsilon_2}}{\ell_n^{r_n}}
 <c_{1n}\cdots c_{kn}
 \leq\frac{x^{\varepsilon_1}}{\ell_n^{r_n}},
 \qquad P^+(c_{1n}\cdots c_{kn})<\ell_n\leq z.
\]
Since \(P^-(b_n)>\ell_n\), we obtain from \eqref{eq:Q-size-uniform} that
\[
 \Om(b_n)\leq\frac{\log b_n}{\log\ell_n}
 \leq(g+\delta^{-1})\frac{\log x}{\log\ell_n}.
\]
The integers \(c_{1n}\cdots c_{kn}\), \(\ell_n^{r_n}\), and
\(b_n\) are pairwise coprime. Applying \eqref{eq:Mclass} twice therefore yields
\[
 F(|Q_1(n)|,\ldots,|Q_k(n)|)
 \leq A^{\Om(b_n)}\min(A^{r_n},B\ell_n^{\varepsilon r_n})
 F(\mathbf c_n).
\]
The twisted multiplicativity \eqref{eq:Kl-mult} gives
\[
 \Kl(a;|Q_j(n)|)
 =\Kl\left(a\,\overline{\ell_n^{r_{jn}}b_{jn}}^{\,2};c_{jn}\right)
  \Kl(a\bar c_{jn}^{\,2};\ell_n^{r_{jn}}b_{jn}).
\]
Since \(\omega(\ell_n^{r_{jn}}b_{jn})\leq1+\omega(b_{jn})\),
the bound \eqref{eq:Kl-Weil} implies
\[
 \prod_{j=1}^k
 |\Kl(a\bar c_{jn}^{\,2};\ell_n^{r_{jn}}b_{jn})|^{\nu_j}
 \leq2^{\frac52|\bnu|+\sum_j\nu_j\omega(b_{jn})}
 \ll2^{|\bnu|\Om(b_n)}.
\]
The bound for \(\Om(b_n)\) also implies
\[
 (2^{|\bnu|}A)^{\Om(b_n)}
 \leq(2^{|\bnu|}A)^{(g+\delta^{-1})\log x/\log\ell_n}.
\]
Moreover, \(\mathcal P_{\mathbf Q}(\mathbf c_n,n)\) holds and
\(\ell_n^{r_n}\mid Q(n)\). If \(p\mid Q(n)\) and
\(p\nmid\ell_nc_{1n}\cdots c_{kn}\), then \(p\mid b_n\) and
\(p>\ell_n\).

We group the terms by
\((\ell_n,r_n,\mathbf c_n)=(\ell,r,\mathbf c)\). With \(u=\log x/\log\ell\), we obtain
\[
\begin{split}
 S_3\ll{}&
 \sum_{\eta_0<\ell\leq z}\sum_{r\geq1}
 \min(A^r,B\ell^{\varepsilon r})
 (2^{|\bnu|}A)^{(g+\delta^{-1})u}\\
 &{}\times
 \sum_{\substack{x^{\varepsilon_2}<\ell^rc_1\cdots c_k
                         \leq x^{\varepsilon_1}\\
                  P^+(c_1\cdots c_k)<\ell}}F(\mathbf c)
 \sum_{\substack{x<n\leq x+y\\
                  \mathcal P_{\mathbf Q}(\mathbf c,n)\\
                  \ell^r\mid Q(n)\\
                  p\mid Q(n),\ p\nmid\ell c_1\cdots c_k\Rightarrow p>\ell}}
 \prod_{j=1}^k
 \left|\Kl\left(a\,\overline{Q_j(n)/c_j}^{\,2};c_j\right)
 \right|^{\nu_j}.
\end{split}
\]
Note that the sum over \(\ell\) is over primes. Employing Lemma~\ref{lem:weighted-sieve-ell} to bound the inner sum, we arrive at
\[
	S_3\ll y
	\sum_{\eta_0<\ell\leq x}\sum_{r\geq1}
	\min(A^r,B\ell^{\varepsilon r})
	\frac{\rho(\ell^r)}{\ell^r}
	(2^{|\bnu|}A)^{(g+\delta^{-1})u}
	\sum_{\substack{c_1\cdots c_k>x^{\varepsilon_2}/\ell^r\\
			P^+(c_1\cdots c_k)<\ell}}
	W(\mathbf c)
	\prod_{\substack{g<p\leq\ell\\p\nmid\ell c_1\cdots c_k}}
	\left(1-\frac{\rho(p)}p\right),
\]
where we have extended the sum over \(\ell\) to \(\ell\leq x\) and removed
the upper bound on \(c_1\cdots c_k\). As in the estimate of \(S_1\), we obtain from \eqref{eq:finiteprod-pho} that
\[
 \prod_{\substack{g<p\leq\ell\\p\nmid\ell c_1\cdots c_k}}
 \left(1-\frac{\rho(p)}p\right)
 \ll V_g(x)u^g\theta(\ell c_1\cdots c_k)
 \ll V_g(x)u^g\theta(c_1\cdots c_k),
\]
which implies 
\begin{equation}\label{eq:S3-1}
	\begin{aligned}
			S_3\ll& yV_g(x) 
		\sum_{\eta_0<\ell\leq x}\sum_{r\geq1}
		\min(A^r,B\ell^{\varepsilon r})
		\frac{\rho(\ell^r)}{\ell^r}
		(2^{|\bnu|}A)^{(g+\delta^{-1})u}u^g\\
		&\times
		\sum_{\substack{c_1\cdots c_k>x^{\varepsilon_2}/\ell^r\\
				P^+(c_1\cdots c_k)<\ell}}
		W(\mathbf c)\theta(c_1\cdots c_k).
	\end{aligned}
\end{equation}

We now choose $\chi=\frac{2+(g+\delta^{-1})\log(2^{|\bnu|}A)}
{\varepsilon_2}$, and set $\eta_0=\e^{8g\chi}$
and \(\sigma=\chi/\log\ell\). By Rankin's trick and the assertions (ii) and (iii) in  Lemma~\ref{lem:stability}, we derive
\[
\begin{split}
 \sum_{\substack{c_1\cdots c_k>x^{\varepsilon_2}/\ell^r\\
                  P^+(c_1\cdots c_k)<\ell}}
 W(\mathbf c)\theta(c_1\cdots c_k)&\leq
 \left(\frac{x^{\varepsilon_2}}{\ell^r}\right)^{-\sigma}
 \sum_{P^+(c_1\cdots c_k)\leq\ell}
 W(\mathbf c)\theta(c_1\cdots c_k)(c_1\cdots c_k)^\sigma\\
 &\quad\ll \e^{-\chi\varepsilon_2u+r\chi}
 \sum_{P^+(c_1\cdots c_k)\leq\ell}W(\mathbf c)\\
 &\quad\ll \e^{-\chi\varepsilon_2u+r\chi}
 \sum_{c_1\cdots c_k\leq x}W(\mathbf c).
\end{split}
\]
Moreover, by the choice of \(\chi\), we get
\[
 (2^{|\bnu|}A)^{(g+\delta^{-1})u}u^g \e^{-\chi\varepsilon_2u+r\chi}
 =\e^{-2u}u^g \e^{r\chi} \ll\e^{-u} \e^{r\chi}.
\]
Substituting these estimates into \eqref{eq:S3-1}, we find
\[
\begin{split}
 S_3\ll{}& yV_g(x)
 \left(\sum_{\eta_0<\ell\leq x}\e^{-u}
 \sum_{r\geq1}\e^{r\chi}\min(A^r,B\ell^{\varepsilon r})
 \frac{\rho(\ell^r)}{\ell^r}\right)
 \sum_{c_1\cdots c_k\leq x}W(\mathbf c).
\end{split}
\]

It follows from the bound \(\rho(\ell^r)\leq g\ell^{r-1}\) (see \eqref{eq:Stewart}) that
\[
 \sum_{r=1}^{2g}\e^{r\chi}\min(A^r,B\ell^{\varepsilon r})
 \frac{\rho(\ell^r)}{\ell^r}
 \ll\frac1\ell.
\]
For \(r>2g\), we  get
\[
 \sum_{r>2g}\e^{r\chi}\min(A^r,B\ell^{\varepsilon r})
 \frac{\rho(\ell^r)}{\ell^r}
 \ll\sum_{r>2g}\e^{r\chi} \ell^{\varepsilon r-\frac{r}{g}}
 \ll\ell^{-1-\frac{1}{2g}},
\]
where the last step uses the facts that \(\ell>\e^{8g\chi}\) and $\varepsilon<1/(2g)$. Combining these two ranges, we obtain
\[
 \sum_{r\geq1}\e^{r\chi}\min(A^r,B\ell^{\varepsilon r})
 \frac{\rho(\ell^r)}{\ell^r}
 \ll\frac1\ell.
\]
Partial summation and the prime number theorem yield
\[
 \sum_{\ell\leq x}\frac{\e^{-u}}{\ell} \ll1.
\]
Finally, we conclude that
\begin{equation}\label{eq:S3}
 S_3\ll yV_g(x)\sum_{d_1\cdots d_k\leq x}W(\mathbf d).
\end{equation}

\subsection{Estimation of $S_2$}
By \eqref{eq:Mclass}, we have
\[
 F(|Q_1(n)|,\ldots,|Q_k(n)|)\leq B|Q(n)|^\varepsilon.
\]
The bound \eqref{eq:Kl-Weil} also implies
\[
 \prod_{j=1}^k|\Kl(a;|Q_j(n)|)|^{\nu_j}
 \leq2^{\frac32|\bnu|+\sum_j\nu_j\omega(|Q_j(n)|)}
 \ll2^{|\bnu|\omega(|Q(n)|)}
 \ll|Q(n)|^\varepsilon.
\]
Here we used \(2^{|\bnu|\omega(m)}\ll m^\varepsilon\).
Together with \eqref{eq:epsilon-budget} and \eqref{eq:Q-size-uniform},
these estimates yield
\begin{equation}\label{eq:trivial-total}
 F(|Q_1(n)|,\ldots,|Q_k(n)|)
 \prod_{j=1}^k|\Kl(a;|Q_j(n)|)|^{\nu_j}
 \ll |Q(n)|^{2\varepsilon}\leq x^{\varepsilon_3}.
\end{equation}

For a prime \(q\leq z\), let \(f(q)\) be the least integer
such that \(q^{f(q)}>x^{2\varepsilon_2}\).
For \(n\in C_2\), one has \(q=P^-(b_n)\leq z\) and $d_n\leq x^{\varepsilon_2}$.
By the definition of \(d_n\), we have $d_n q^{v_q(Q(n))}>x^{\varepsilon_1}$. This further yields
\[
 q^{v_q(Q(n))}>\frac{x^{\varepsilon_1}}{d_n}
 \geq x^{\varepsilon_1-\varepsilon_2}=x^{2\varepsilon_2}.
\]
Thus \(q^{f(q)}\mid Q(n)\). Combining this with \eqref{eq:trivial-total}, we get
\begin{equation}\label{eq:S2-1}
	S_2\ll x^{\varepsilon_3}\sum_{q\leq z}
	\sum_{\substack{x<n\leq x+y\\q^{f(q)}\mid Q(n)}}1,
\end{equation}
where the sum over \(q\) is over primes. By the minimality of \(f(q)\),
\[
 x^{2\varepsilon_2}<q^{f(q)}\leq qx^{2\varepsilon_2}
 \leq x^{2\varepsilon_2+\varepsilon_3}<y.
\]
We count the solutions modulo \(q^{f(q)}\) and use
Stewart's estimate \eqref{eq:Stewart} to obtain
\[
 \sum_{\substack{x<n\leq x+y\\q^{f(q)}\mid Q(n)}}1
 \leq
 \rho(q^{f(q)})\left(\frac{y}{q^{f(q)}}+1\right)
\ll yq^{-\frac{f(q)}{g}}\leq yx^{-\frac{2\varepsilon_2}{g}}.
\]
Inserting this into \eqref{eq:S2-1}, we get
\begin{equation}\label{eq:S2}
 S_2 \ll yzx^{\varepsilon_3-\frac{2\varepsilon_2}{g}} \ll yx^{2\varepsilon_3-\frac{2\varepsilon_2}{g}}
 =yx^{-\frac{\varepsilon_2}{g}}.
\end{equation}

\subsection{Estimation of $S_4$}
We use the trivial bound \eqref{eq:trivial-total}, and obtain
\[
 S_4
 \ll x^{\varepsilon_3}
 \sum_{\substack{x^{\varepsilon_2}<d\leq x^{\varepsilon_1}\\
                  P^+(d)\leq\eta_0}}
 \sum_{\substack{x<n\leq x+y\\d\mid Q(n)}}1 \ll yx^{\varepsilon_3}
 \sum_{\substack{x^{\varepsilon_2}<d\leq x^{\varepsilon_1}\\
                  P^+(d)\leq\eta_0}}\frac{\rho(d)}d.
\]
By the multiplicativity of $\rho(d)$ and \eqref{eq:Stewart}, we have
\[
 \frac{\rho(d)}d\leq g^{\omega(d)}d^{-\frac{1}{g}}
 \ll d^{-\frac{1}{g}},
\]
since \(\omega(d)\leq \eta_0\). Rankin's trick gives
\[
\begin{aligned}
 \sum_{\substack{x^{\varepsilon_2}<d\leq x^{\varepsilon_1}\\
                  P^+(d)\leq\eta_0}}d^{-1/g}
 &\leq x^{-3\varepsilon_2/(4g)}
       \sum_{P^+(d)\leq\eta_0}d^{-1/(4g)}\\
 &=x^{-3\varepsilon_2/(4g)}
   \prod_{p\leq\eta_0}\left(1-p^{-1/(4g)}\right)^{-1}
 \ll x^{-3\varepsilon_2/(4g)}.
\end{aligned}
\]
Combining these estimates, we obtain
\begin{equation}\label{eq:S4}
 S_4\ll yx^{-\varepsilon_2/(4g)}.
\end{equation}

\subsection{Estimation of $S_5$}
By the trivial bound \eqref{eq:trivial-total}, we have
\begin{equation}\label{eq:S5}
 S_5
 \ll x^{\varepsilon_3}\sum_{1\leq d\leq x^{\varepsilon_1}}
       \sum_{\substack{x<n\leq x+y\\|Q(n)|=d}}1\ll x^{\varepsilon_1+\varepsilon_3}
 \leq yx^{-\alpha/2}.
\end{equation}

\vskip 2mm
Finally, the estimate \eqref{eq:sieve-dimension-axiom} yields
\(V_g(x)\gg(\log x)^{-g}\).
We deduce from \eqref{eq:S2}, \eqref{eq:S4} and \eqref{eq:S5} that
\[
 S_2+S_4+S_5\ll yV_g(x).
\]
Theorem \ref{thm:main} follows from this bound, \eqref{eq:S1}, \eqref{eq:S3} and the fact \(W(\mathbf1_k)=1\).

\vskip 5mm

\section{Proofs of Corollaries \ref{cor:discriminant}--\ref{cor:normalized}}\label{sec:upper-corollaries}

We follow the argument of \cite[Section~6]{Henriot-2012} for the
arithmetic sum in \eqref{eq:main-theorem}.

\begin{proof}[Proof of Corollary \ref{cor:discriminant}]
For \(1\leq j\leq k\), we write \(d_j=b_jc_j\) with
\(b_j\mid D^\infty\) and \((c_j,D)=1\).
Since \((b_1\cdots b_k,c_1\cdots c_k)=1\),
submultiplicativity \eqref{submultiplicativity} and the multiplicativity of \(H_{a,\bnu}\) yield
\begin{equation*}
\begin{aligned}
 \sum_{d_1\cdots d_k\leq x}F(\mathbf d)H_{a,\bnu}(\mathbf d)&\leq
 \bigg(\sum_{b_1\cdots b_k\mid D^\infty}
 T_{a,\bnu,F}(\mathbf b)\bigg)
 \sum_{\substack{c_1\cdots c_k\leq x\\
                  (c_1\cdots c_k,D)=1}}
 F(\mathbf c)H_{a,\bnu}(\mathbf c)\\
 &\leq
 \Delta_D(a,\bnu,F)
 \sum_{\substack{c_1\cdots c_k\leq x\\
                  (c_1\cdots c_k,D)=1}}
 F(\mathbf c)H_{a,\bnu}(\mathbf c).
\end{aligned}
\end{equation*}
Here we further apply the submultiplicativity to the prime power factors
of \(\mathbf b\) and obtain
\[
 \sum_{b_1\cdots b_k\mid D^\infty}T_{a,\bnu,F}(\mathbf b)
 \leq\prod_{p\mid D}
 \bigg(1+\sum_{|\boldsymbol\ell|\geq1}
 T_{a,\bnu,F}(p^{\boldsymbol\ell})\bigg)
 =\Delta_D(a,\bnu,F).
\]
As \(\Delta_D(a,\bnu,F)= \prod_{p|D} \Delta_p(a,\bnu,F)\), the proof is complete with Lemma~\ref{lem:T-Delta} and the last inequality of \eqref{eq:Delta-D-finite} makes use of
\(1+Cp^{-1}\leq(1-p^{-1})^{-C}\).
\end{proof}

\begin{proof}[Proof of Corollary \ref{cor:Henriot-cor2}]
For pairwise coprime \(d_1,\ldots,d_k\), each prime divides at most
one \(d_j\).
By \eqref{eq:H-majorant-coprime-D}, the inequality \(F\leq G_F\)
and the submultiplicativity of \(G_F\), we get
\begin{equation}\label{eq:c33}
 \sum_{\substack{d_1\cdots d_k\leq x\\
                  (d_1\cdots d_k,D)=1\\
                  (d_i,d_j)=1\ (i\ne j)}}
 F(\mathbf d)H_{a,\bnu}(\mathbf d)
 \leq
 \prod_{\substack{p\leq x\\p\nmid D}}
 \bigg(1+\sum_{j=1}^k\sum_{\ell\geq1}
 G_F(\mathbf p_j^\ell)H_{a,\bnu}(\mathbf p_j^\ell)
\bigg).    
\end{equation}
For \(p\big|2\lc(Q)\) or $p\le g$,  we have 
\begin{align*}
1+ \sum_{j=1}^k\sum_{\ell\geq1}
 G_F(\mathbf p_j^\ell)H_{a,\bnu}(\mathbf p_j^\ell)
 = \Delta_p(a,\bnu,F)\le 1+Cp^{-1},
\end{align*}
by Lemma~\ref{lem:T-Delta} with $\boldsymbol{\ell}= (0,\ldots,\ell,\ldots,0)$.  As \(S= \prod_{p\in \mathcal{S}} p\), we get, by \eqref{eq:Delta}, 
\[
\Delta_S(a,\bnu,F) \le \prod_{p|S}\big(1+Cp^{-1}\big) \le \prod_{p|S}\big(1-p^{-1}\big)^{-C} = \Big(\frac{S}{\varphi(S)}\Big)^C.
\]

For \(p\nmid S\), we apply Case (2) of Lemma~\ref{lem:local-majorant},  and 
\[
 \sum_{j=1}^k\sum_{\ell\geq2}
G_F(\mathbf p_j^\ell)H_{a,\bnu}(\mathbf p_j^\ell)
 \le
  B2^{\nu_j}\sum_{j=1}^k \rho_{Q_j}(p)
 \sum_{\ell\geq2}p^{-(1-\varepsilon)\ell}
 \ll p^{-3/2}.
\]
In light of the positivity of summands, 
\[
1+\sum_{j=1}^k\sum_{\ell\geq1}
 G_F(\mathbf p_j^\ell)H_{a,\bnu}(\mathbf p_j^\ell)
 \le \big(1+O(p^{-3/2})\big)\bigg(1+\sum_{j=1}^k
 G_F(\mathbf p_j^1)H_{a,\bnu}(\mathbf p_j^1)
\bigg)  
\]
As \(\prod_p(1+O(p^{-3/2}))\ll 1\) and 
\[H_{a,\bnu}(\mathbf p_j^1) =  \frac{\rho_{Q_j}(p)}p \lambda_{a,\nu_j}(p),\]
the result follows from
Corollary~\ref{cor:discriminant} and \eqref{eq:c33}.
\end{proof}

\begin{proof}[Proof of Theorem~\ref{thm:prime-variable}]
We use the argument of Nair and Tenenbaum
\cite[Section~9]{NT-1998}. Define
\(\chi_x(n)=\mathbf1_{P^-(n)>x}\) and
\(F_0(\mathbf d,d_{k+1})=F(\mathbf d)\chi_x(d_{k+1})\). One has \(F_0\in\mathcal M_{k+1}(A,B,\varepsilon)\). We set
\[
 \widehat{\mathbf Q}=(Q_1,\ldots,Q_k,X),\qquad
 \widehat Q=XQ,\qquad
 \bnu_0=(\bnu,0).
\]
Since \(Q(0)\ne0\), the polynomials in \(\widehat{\mathbf Q}\) are pairwise coprime and irreducible.
Moreover, \(\widehat Q\) is primitive with
\(\deg\widehat Q=g+1\) and \(\|\widehat Q\|=\|Q\|\). Thus
\eqref{eq:prime-epsilon-range} permits an application of
Theorem~\ref{thm:main} to the enlarged tuple.

For \(x<n\leq x+y\leq2x\), we have \(\chi_x(n)=1\) precisely when \(n\) is prime. Applying
Corollary~\ref{cor:Henriot-cor2}, the sum on the left-hand side of \eqref{eq:prime-variable} is at most
\begin{equation}\label{eq:prime-density-bound}
 \Delta_S(a,\bnu,F_0)\,y
 \widehat V_{g+1}(x)
  \prod_{\substack{p\le x\\p\nmid S}}
 \left(
 1+
 \sum_{j=1}^{k+1}
 \widehat{G}_{F_0}(\mathbf p_j^1)
 \frac{\rho_{Q_j}(p)
       \lambda_{a,\nu_j}(p)}{p}
 \right)
\end{equation}
where \(\widehat{V}_{g+1}(x)\) is defined as in \eqref{eq:Vg} with $\rho_{\widehat{Q}}$ in place of  $\rho$ and  \(\widehat{G}_{F_0}\) is defined by \eqref{eq:standard-majorant} for \(M_{k+1}(A,B,\varepsilon)\) and $Q_{k+1}(X)= X$. 
Recalling $\rho= \rho_Q$, we have 
\(\rho_{\widehat Q}(p)=\rho(p)+\mathbf1_{p\nmid Q(0)}\); thus for
\(p>g+1\) with \(p\nmid Q(0)\),
\[
 1-\frac{\rho_{\widehat{Q}}(p)}p
 \leq\left(1-\frac{\rho(p)}p\right)
       \left(1-\frac1p\right).
\]
The factors with \(p\mid Q(0)\) are unchanged. Mertens' theorem
therefore gives
\[
 \widehat V_{g+1}(x)
 \ll_g \frac{|Q(0)|}{\varphi(|Q(0)|)}
       \frac{V_g(x)}{\log x}.
\]
Combining this estimate with \eqref{eq:prime-density-bound} and  \eqref{eq:Vg} proves \eqref{eq:prime-variable}.
\end{proof}

\begin{proof}[Proof of Corollary \ref{cor:normalized}]
We take \(\varepsilon=\alpha/(100g(g+1/\delta))\) and
\(F(\mathbf d)=\prod_{j=1}^k z_j^{\omega(d_j)}\).
We have \(F\in\mathcal M_k(A,B,\varepsilon)\), where
\(A=1+\max(z_1,\ldots,z_k)\) and \(B\) depends only on
\(\mathbf z\) and \(\varepsilon\).
Moreover, \(G_F=F\) and \(G_F(\mathbf p_j^1)=z_j\).

We apply Corollary~\ref{cor:Henriot-cor2} and absorb
\(\Delta_S(a,\bnu,F)\)  into the implied constant. This constant is
independent of \(a\), by
\eqref{eq:vartheta-bad}.

For \(p\notin\mathcal S\), the \(Q_j\)'s have no common root modulo
\(p\), so \(\rho(p)=\sum_j\rho_{Q_j}(p)\).
Invoking \eqref{eq:lambda-prime-upper}, we obtain
\[
 \bigg(1-\frac{\rho(p)}p\bigg)
 \bigg(1+\sum_{j=1}^k z_j
       \frac{\rho_{Q_j}(p)\lambda_{a,\nu_j}(p)}p\bigg)\leq
 \exp\bigg(\frac1p\sum_{j=1}^k
       (z_j\sn(\nu_j)-1)\rho_{Q_j}(p)
       +O_{\bnu,\mathbf z,g}(p^{-9/8})\bigg).
\]
The error term is uniform in \(a\).
We now use Erd\H{o}s's estimate \cite[Lemma~7]{Erdos-1952},
\[
 \sum_{p\leq x}\frac{\rho_{Q_j}(p)}p
 =\log\log x+O_{Q_j}(1),
\]
and then obtain the required
estimate
\[
 \prod_{\substack{p\leq x\\p\notin\mathcal S}}
 \left(1-\frac{\rho(p)}p\right)
 \left(1+\sum_{j=1}^k z_j
       \frac{\rho_{Q_j}(p)\lambda_{a,\nu_j}(p)}p\right)
 \ll (\log x)^{\sum_{j=1}^k(z_j\sn(\nu_j)-1)}.
 \qedhere
\]
\end{proof}

\section{Proof of Theorem \ref{thm:lower}}\label{sec:lower-proof}

Fix \(z>0\), \(\nu\geq1\), and \(A>0\), and suppose that
\(0<|a|\leq x^A\). Throughout this section, \(x\) is sufficiently
large in terms of \(A,z,\nu\). Set
\[
 C=4(1+z\sn(\nu)),\qquad Z=x^{\frac{1}{30C}}.
\]
Let \(\mathcal D\) consist of the squarefree integers \(d\) satisfying
\begin{equation*}
 d\leq x^{\frac{1}{30}},\qquad \omega(d)\leq C\log\log x,\qquad
 P^+(d)\leq Z,\qquad (d,2a)=1.
\end{equation*}
For convenience, we extend \(\lambda_{a,\nu}\) to a multiplicative function over squarefree integers:
\[
 \lambda_{a,\nu}(d)=\prod_{p\mid d}\lambda_{a,\nu}(p),
 \qquad \lambda_{a,\nu}(1)=1.
\]

\subsection{The contribution of the small primes}

\begin{lemma}\label{lem:lower-small-mass}
We have
\[
 \mathcal H_a:=\sum_{d\in\mathcal D}
 \frac{z^{\omega(d)}\lambda_{a,\nu}(d)}d
 \gg
 (\log x)^{z\sn(\nu)}
 \prod_{p\mid2a}\left(1+\frac{z\sn(\nu)}p\right)^{-1},
\]
where the implied constant depends only on \(z,\nu\).
\end{lemma}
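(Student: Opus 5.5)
We compare \(\mathcal H_a\) with the full Euler product
\[
 E:=\prod_{\substack{p\le Z\\ p\nmid 2a}}\left(1+\frac{z\lambda_{a,\nu}(p)}{p}\right),
\]
and remove the three truncations \(d\le x^{1/30}\), \(\omega(d)\le C\log\log x\), \(P^+(d)\le Z\) by Rankin-type arguments. Since every \(d\in\mathcal D\) has \(P^+(d)\le Z\) and \((d,2a)=1\), the full sum over squarefree \(d\) with these two properties equals \(E\).

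\textbf{Size of \(E\).} By \eqref{eq:lambda-prime}, for \(p\nmid 2a\) we have \(\lambda_{a,\nu}(p)=\sn(\nu)+O_\nu(p^{-1/8})\), uniformly in \(a\). Hence
\[
 \log E=z\sn(\nu)\sum_{p\le Z}\frac1p-\sum_{p\mid 2a,\,p\le Z}\log\Bigl(1+\frac{z\sn(\nu)}p\Bigr)+O(1).
\]
The error \(\sum_p p^{-9/8}\) converges, and for \(p\mid 2a\) we have used \(\log(1+zs/p)\) together with the lower bound \(\lambda\ge0\). Primes \(p\mid a\) with \(p>Z\) contribute at most \(\sum_{p\mid a,\,p>Z}1/p\ll \omega(a)/Z\ll A\log x/Z=o(1)\), using \(|a|\le x^A\). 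Mertens then gives
\[
 E\asymp (\log x)^{z\sn(\nu)}\prod_{p\mid 2a}\Bigl(1+\frac{z\sn(\nu)}p\Bigr)^{-1},
\]
since \(\log Z\asymp_{C}\log x\).

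\textbf{Removing the truncations.} First take the terms with \(d>x^{1/30}\). Using \(d^{\sigma}x^{-\sigma/30}\ge1\) with \(\sigma=1/\log Z\), their total is at most
\[
 x^{-\sigma/30}\prod_{p\le Z}\Bigl(1+\frac{zp^{\sigma}\lambda(p)}p\Bigr)\le e^{-C}\,e^{O(1)}E,
\]
because \(x^{\sigma/30}=e^{C}\) and \(p^\sigma\le e\) for \(p\le Z\). Here the ratio of the two Euler products is bounded by \(\exp\bigl(\sum_{p\le Z}z\lambda(p)(p^\sigma-1)/p\bigr)=e^{O(z)}\). The implied \(O(1)\) depends on \(z\sn(\nu)\), while \(C=4(1+z\sn(\nu))\) enters the exponent \(e^{-C}\). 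Next take the terms with \(\omega(d)>C\log\log x\). Weighting by \(t^{\omega(d)-C\log\log x}\) with \(t=e\) bounds them by
\[
 (\log x)^{-C}\prod_{p\le Z}\Bigl(1+\frac{ez\lambda(p)}p\Bigr)\ll(\log x)^{ez\sn(\nu)+2\cdot 0-C}\ll (\log x)^{z\sn(\nu)-1},
\]
since \(C\ge 4z\sn(\nu)+4>(e-1)z\sn(\nu)+1\). This is negligible compared with \(E\) even after dividing by the local factor, because that factor is at least \(\prod_{p\mid 2a}(1+z s/p)^{-1}\gg(\log\log x)^{-O(1)}\) given \(\omega(a)\ll\log x\). (More precisely, that product is \(\gg \exp(-zs\sum_{p\mid 2a}1/p)\gg (\log\log x)^{-zs}\).)

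\textbf{The main obstacle.} The delicate step is the first truncation. The Rankin bound only gives a constant-factor saving \(e^{-C+O(1)}\), so we must check that the \(O(1)\) is smaller than \(C\). If it is not, we would instead pick \(\sigma=\kappa/\log Z\) with \(\kappa\) large, paying \(e^{O(z\kappa)}\) against a saving of \(e^{-\kappa C}\). Since \(Z=x^{1/(30C)}\), the saving scales as \(\kappa C\) while the loss scales as \(z\sn(\nu)\kappa e^{\kappa}\). It is therefore cleaner to take \(\sigma=1/\log Z\) exactly, so that \(p^\sigma-1\le(e-1)\log p/\log Z\). The loss is then \(\exp\bigl((e-1)z\sn(\nu)\sum_{p\le Z}\log p/(p\log Z)\bigr)\le e^{(e-1)z\sn(\nu)+o(1)}\), and \(C=4(1+z\sn(\nu))\) exceeds this, leaving a saving of at least \(e^{-1}\). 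Subtracting the two discarded pieces from \(E\) yields \(\mathcal H_a\ge(1-e^{-1}-o(1))E\), which is the stated bound.
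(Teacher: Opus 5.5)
Your proof is correct. It shares the paper's overall frame but removes the two truncations by a different device. Like the paper, you compare $\mathcal H_a$ with the full Euler product $\mathcal E_a=\prod_{p\le Z,\,p\nmid 2a}\bigl(1+z\lambda_{a,\nu}(p)/p\bigr)$ and bound $\mathcal E_a$ from below via Lemma~\ref{lem:lambda} and Mertens. Only the lower bound is needed there, and dropping the primes $p\mid 2a$ with $p>Z$ only helps, so $|a|\le x^A$ plays no role in that step.

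The difference is in discarding $d>x^{1/30}$ and $\omega(d)>C\log\log x$. The paper uses one linear majorant, $\mathbf 1_{d\notin\mathcal D}\le \frac{30\log d}{\log x}+\frac{\omega(d)}{C\log\log x}$ for $d\in\mathcal B$. It expands $\log d$ and $\omega(d)$ as sums over $p\mid d$ and uses $\sum_{d\in\mathcal B,\,p\mid d}z^{\omega(d)}\lambda_{a,\nu}(d)/d\le \bigl(z\lambda_{a,\nu}(p)/p\bigr)\mathcal E_a$. This needs only the first moments $\sum_{p\le Z}\lambda_{a,\nu}(p)\log p/p$ and $\sum_{p\le Z}\lambda_{a,\nu}(p)/p$. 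It bounds the discarded proportion by $2z\sn(\nu)/C+o(1)<1/2$ in a single computation, which is exactly why $C=4(1+z\sn(\nu))$ is chosen.

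You instead use exponential Rankin weights, $d^{\sigma}$ with $\sigma=1/\log Z$ and $e^{\omega(d)}$. This costs a comparison of shifted Euler products, via $p^\sigma-1\le (e-1)\log p/\log Z$ and the product with $ez$ in place of $z$. In return the tails are sharper. The first piece is at most $e^{-C+(e-1)z\sn(\nu)+o(1)}\mathcal E_a\le e^{-4+o(1)}\mathcal E_a$. The second is $O_{z,\nu}\bigl((\log x)^{(e-4)z\sn(\nu)-4}\bigr)$, negligible outright.

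Two small points of hygiene. First, the Euler products in your Rankin bounds must run over $p\le Z$ with $p\nmid 2a$, as the sums over $d$ do. Otherwise their ratio to $E$ picks up a factor $\prod_{p\mid 2a,\,p\le Z}(1+zp^\sigma\lambda_{a,\nu}(p)/p)$, which is not bounded uniformly in $a$. Second, for the $\omega$-piece you do not need the comparison with $\prod_{p\mid 2a}(1+z\sn(\nu)/p)^{-1}$ or the hypothesis $|a|\le x^A$. Since $E\ge 1$ and that piece is $O_{z,\nu}((\log x)^{-4})$, dropping this detour keeps both the implied constant and the threshold for $x$ dependent only on $z,\nu$.
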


\begin{proof}
Let \(\mathcal B\) be the set of squarefree integers \(d\) with
\(P^+(d)\leq Z\) and \((d,2a)=1\), and denote
\[
 \mathcal E_a=\sum_{d\in\mathcal B}
 \frac{z^{\omega(d)}\lambda_{a,\nu}(d)}d
 =\prod_{\substack{p\leq Z\\p\nmid2a}}
 \left(1+\frac{z\lambda_{a,\nu}(p)}p\right).
\]
For each prime \(p\leq Z\) with \(p\nmid2a\), by multiplicativity, we have
\[
\begin{aligned}
 \sum_{\substack{d\in\mathcal B\\p\mid d}}
 \frac{z^{\omega(d)}\lambda_{a,\nu}(d)}d
 &=\frac{z\lambda_{a,\nu}(p)}p
 \prod_{\substack{q\leq Z\\q\nmid2a,\ q\ne p}}
 \left(1+\frac{z\lambda_{a,\nu}(q)}q\right)\\
 &\leq\frac{z\lambda_{a,\nu}(p)}p\,\mathcal E_a.
\end{aligned}
\]
Note that \(\log d=\sum_{p\mid d}\log p\) and
\(\omega(d)=\sum_{p\mid d}1\) for \(d\in\mathcal B\).
For \(d\in\mathcal B\setminus\mathcal D\), one has
\(d>x^{1/30}\) or \(\omega(d)>C\log\log x\). Hence
\begin{equation}\label{eq:EH}
	\begin{aligned}
		\frac{\mathcal E_a-\mathcal H_a}{\mathcal E_a}
		&\leq\frac1{\mathcal E_a}
		\sum_{d\in\mathcal B}
		\frac{z^{\omega(d)}\lambda_{a,\nu}(d)}d
		\left(\frac{30\log d}{\log x}
		+\frac{\omega(d)}{C\log\log x}\right)\\
		&\leq
		\sum_{\substack{p\leq Z\\p\nmid2a}}
		\frac{z\lambda_{a,\nu}(p)}p
		\left(\frac{30\log p}{\log x}
		+\frac1{C\log\log x}\right).
	\end{aligned}
\end{equation}
Using \eqref{eq:lambda-prime}, the relation
\(\log Z=(\log x)/(30C)\), and Mertens' estimates
\[
 \sum_{p\leq Z}\frac{\log p}p=\log Z+O(1),
 \qquad
 \sum_{p\leq Z}\frac1p=\log\log Z+O(1),
\]
we bound the last sum in \eqref{eq:EH} by
\[
 \frac{30z\sn(\nu)\log Z+O_{z,\nu}(1)}{\log x}
 +\frac{z\sn(\nu)\log\log Z+O_{z,\nu}(1)}{C\log\log x}
 =\frac{2z\sn(\nu)}C+o_{z,\nu}(1).
\]
Since \(2z\sn(\nu)/C<1/2\), we have
\(\mathcal H_a\gg_{z,\nu}\mathcal E_a\). Moreover, by \eqref{eq:lambda-prime} and Mertens' formula, we get
\begin{equation*}
	\mathcal E_a\gg\prod_{\substack{p\leq Z\\p\nmid2a}}
	\left(1+\frac{z\sn(\nu)}p\right)\gg_{z,\nu} (\log Z)^{z\sn(\nu)}
	\prod_{p\mid2a}\left(1+\frac{z\sn(\nu)}p\right)^{-1}.
\end{equation*}
This proves the lemma.
\end{proof}

\subsection{Averaging over one large prime}

For \(d\in\mathcal D\) and \((r,d)=1\), define
\[
 W_d(r)=z^{\omega(d)}|\Kl(a\bar r^{\,2};d)|^\nu,
 \qquad W_1(r)=1.
\]
This weight depends only on \(r\bmod d\). By twisted multiplicativity,
we have
\[
 W_d(r)=\prod_{p\mid d}
 z\left|\Kl\left(a\overline{r d/p}^{2};p\right)\right|^\nu.
\]
Averaging over the reduced residue classes modulo \(d\) and applying
the Chinese remainder theorem, we obtain from \eqref{eq:lambda}
\begin{equation}\label{eq:lower-small-mean}
 \frac{1}{\varphi(d)}\sideset{}{^*}\sum_{r\bmod d}W_d(r)
 =z^{\omega(d)}\lambda_{a,\nu}(d).
\end{equation}

Consider primes \(p_2,p_3\) in the ranges
\begin{equation}\label{eq:lower-large-ranges}
 x^{\frac{3}{10}}\leq p_2<\frac{1}{4}x^{\frac{1}{3}},\qquad
 x^{\frac{1}{4}}\leq p_3<\frac{1}{4}x^{\frac{3}{10}},
\end{equation}
and primes \(p_1\) such that
\begin{equation*}\label{eq:lower-product-band}
 \frac{x}{2}<dp_1p_2p_3\leq x.
\end{equation*}
For \(d\in\mathcal D\), these conditions imply
\begin{equation}\label{eq:lower-p1-range}
 8x^{\frac{1}{3}}<p_1\leq x^{\frac{9}{20}}.
\end{equation}
Thus \(p_1>p_2>p_3>Z\), and \((p_1p_2p_3,d)=1\).
Moreover, as \(P^+(d)\le Z\) , distinct tuples \((d,p_1,p_2,p_3)\) give
distinct integers $n=dp_1p_2p_3$.
Throughout this subsection, sums over \(p_2\) and \(p_3\) are restricted
to \eqref{eq:lower-large-ranges}. Define
\begin{equation}\label{eq:lower-total-weight}
 \mathcal T_a=
 \sum_{d\in\mathcal D}\sum_{p_2}\sum_{p_3}
 \sum_{\frac{x}{2dp_2p_3}<p_1\leq\frac{x}{dp_2p_3}}
 W_d(p_1p_2p_3).
\end{equation}
Set \(B_0=C\max\{\log(z2^\nu),0\}\).
Since every prime dividing \(d\) is odd, \eqref{eq:prime-power-Kl-bound}
and the restriction on \(\omega(d)\) give
\begin{equation}\label{eq:lower-weight-bound}
 0\leq W_d(r)\leq(z2^\nu)^{\omega(d)}
 \leq(\log x)^{B_0}
 \qquad(d\in\mathcal D,\ (r,d)=1).
\end{equation}

\begin{lemma}\label{lem:lower-prime-average}
We have
\[
 \mathcal T_a\gg
 x(\log x)^{z\sn(\nu)-1}
 \prod_{p\mid2a}\left(1+\frac{z\sn(\nu)}{p}\right)^{-1},
\]
where the implied constant depends only on \(z,\nu\).
\end{lemma}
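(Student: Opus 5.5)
The plan is to fix \(d\in\mathcal D\) and primes \(p_2,p_3\) in the ranges \eqref{eq:lower-large-ranges}, and to treat the innermost sum over \(p_1\) as a sum of the function \(r\mapsto W_d(r p_2p_3)\), which depends only on \(r\bmod d\), over primes in the interval \(I=(x/(2dp_2p_3),x/(dp_2p_3)]\). Decompose according to reduced residue classes \(b\bmod d\):
\[
 \sum_{p_1\in I}W_d(p_1p_2p_3)=\sideset{}{^*}\sum_{b\bmod d}W_d(bp_2p_3)\,\pi(I;d,b).
\]
Replace \(\pi(I;d,b)\) by \(\pi(I)/\varphi(d)+E(I;d,b)\). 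The main term is then \(\pi(I)\varphi(d)^{-1}\sideset{}{^*}\sum_{b}W_d(bp_2p_3)\). Since \(b\mapsto bp_2p_3\) permutes the reduced classes, \eqref{eq:lower-small-mean} turns this into \(\pi(I)z^{\omega(d)}\lambda_{a,\nu}(d)\).

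For the error, \eqref{eq:lower-weight-bound} bounds the weights by \((\log x)^{B_0}\), so the error is at most \((\log x)^{B_0}\sum_b|E(I;d,b)|\). The moduli satisfy \(d\le x^{1/30}\), while \(p_1\asymp x/(dp_2p_3)\ge x^{1/3}\), so \(d\) is at most a small power of the length of \(I\). Summing over \(d\), I would invoke Bombieri--Vinogradov in the form with a \(\max_{y}\) over intervals. Here \(d\le x^{1/30}\le (x/(dp_2p_3))^{1/10}\), which is well below the square-root level. This gives
\[
 \sum_{d\le x^{1/30}}\max_b|E(I;d,b)|\ll \frac{|I|}{(\log x)^{M}}
\]
for any \(M\), uniformly in \(p_2,p_3\). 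The weight has no dependence on \(b\) beyond boundedness, and \(\varphi(d)\) times the max-error is handled by the standard form \(\sum_d \sum_b^*|E|\ll \sum_d\varphi(d)\max_b|E|\le x^{1/30}\sum_d\max_b |E|\); alternatively one uses Barban--Davenport--Halberstam. Since \(M\) is arbitrary, this absorbs \((\log x)^{B_0}\) and still yields a total error \(\ll x(\log x)^{-M'}\) after summing over \(p_2,p_3\), where \(\sum_{p_2,p_3}|I|\ll x\sum 1/(p_2p_3)\ll x\). The dependence on \(a\) enters only through the weights, so the bound is uniform in \(|a|\le x^A\).

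For the main term, \(\pi(I)\gg x/(dp_2p_3\log x)\) because \(|I|\ge x^{1/3}\). Hence
\[
 \mathcal T_a\gg \frac{x}{\log x}\sum_{d\in\mathcal D}\frac{z^{\omega(d)}\lambda_{a,\nu}(d)}{d}\sum_{p_2,p_3}\frac1{p_2p_3}-O\!\left(\frac{x}{(\log x)^{M'}}\right).
\]
By Mertens' theorem, \(\sum_{p_2}1/p_2\) and \(\sum_{p_3}1/p_3\) are \(\log(10/9)+o(1)\) and \(\log(6/5)+o(1)\), both \(\gg1\). Lemma~\ref{lem:lower-small-mass} then gives \(\mathcal H_a\gg(\log x)^{z\sn(\nu)}\prod_{p\mid2a}(1+z\sn(\nu)/p)^{-1}\). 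The product over \(p\mid 2a\) is at least \((\log x)^{-o(1)}\) (indeed it is \(\gg(\log\log x)^{-z\sn(\nu)}\) since \(|a|\le x^A\)), so the error term is negligible. This gives the claimed bound.

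The main obstacle is the error step: making the Bombieri--Vinogradov application rigorous for intervals \(I\) that depend on \(d,p_2,p_3\), and making sure the weighted sum over all reduced classes, not just the worst one, is controlled despite the \((\log x)^{B_0}\) weight. I would first handle this with the maximal form of Bombieri--Vinogradov over \(y\le x\), applied separately for each \(p_2p_3\) with \(x\) replaced by \(x/(p_2p_3)\). The ranges then give level \(d\le (x/p_2p_3)^{1/10}\), which is comfortably admissible.
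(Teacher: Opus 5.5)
\textbf{Gap in the error term.} Your overall route is the paper's. You split the $p_1$-sum into reduced classes modulo $d$, get the main term from \eqref{eq:lower-small-mean}, and control the error by Bombieri--Vinogradov. The paper weights by $\log p_1$ and uses $\Theta_a\le(\log x)\mathcal T_a$ rather than counting primes with $\pi(I)$, but that difference is cosmetic.

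The error step, as written, does not close. You bound
$\sum_b^*|E(I;d,b)|\le\varphi(d)\max_b|E(I;d,b)|\le x^{1/30}\max_b|E(I;d,b)|$
and then apply Bombieri--Vinogradov once, at level $x/(p_2p_3)$.

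\begin{itemize}
\item That application saves only a power of $\log x$.
\item For fixed $p_2,p_3$, the main term is $\ll\frac{x}{p_2p_3\log x}\mathcal H_a$ with $\mathcal H_a\le(\log x)^{O(1)}$.
\item So taking $M$ large cannot absorb the factor $x^{1/30}$.
\end{itemize}

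Reading $|I|$ as the shortest length $x/(2x^{1/30}p_2p_3)$ does not rescue the chain either. Already for a bounded modulus, that would force a power-saving error term in the prime number theorem for progressions, which is not available unconditionally.

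\textbf{The missing idea.} The factor $\varphi(d)$ must be paid for by the length $\asymp x/(dp_2p_3)$ of the $p_1$-interval, and a single application at level $x/(p_2p_3)$ throws this away. Split $d$ into dyadic blocks $D\le d<2D$ and set $X_D=x/(Dp_2p_3)$.

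\begin{itemize}
\item Both endpoints of $I_d$ are $\le X_D$, so $|E(I_d;d,b)|\le2\max_{y\le X_D}|\pi(y;d,b)-\pi(y)/\varphi(d)|$.
\item Since $X_D\ge16x^{1/3}$ and $D\le x^{1/30}$, we have $2D\le X_D^{1/2}(\log X_D)^{-B}$.
\end{itemize}

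Bombieri--Vinogradov at level $X_D$ then gives
\[
 \sum_{D\le d<2D}\varphi(d)\max_{(b,d)=1}|E(I_d;d,b)|
 \ll D\,X_D(\log x)^{-M}=\frac{x}{p_2p_3}(\log x)^{-M}.
\]
Summing over the $O(\log x)$ blocks and over $p_2,p_3$ gives $\ll x(\log x)^{1-M}$, which absorbs $(\log x)^{B_0}$ once $M$ is large. This is exactly how the paper treats $E(d;p_2,p_3)$. The same blockwise pairing would be needed if you used Barban--Davenport--Halberstam instead.

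With this repair, the rest of your argument goes through: the bound $\pi(I_d)\gg x/(dp_2p_3\log x)$, Mertens for $p_2,p_3$, and Lemma~\ref{lem:lower-small-mass} together with $\mathcal H_a\ge1$.
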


\begin{proof}
Let \(\Theta_a\) be the sum in \eqref{eq:lower-total-weight} with the
additional factor \(\log p_1\). Since
\(\Theta_a\leq(\log x)\mathcal T_a\), it suffices, by
Lemma~\ref{lem:lower-small-mass}, to prove
\(\Theta_a\gg_{z,\nu}x\mathcal H_a\).

Fix \(p_2,p_3\) and \(d\in\mathcal D\). The primes \(p_1\) in
\eqref{eq:lower-total-weight} are coprime to \(d\). Splitting the sum
over \(p_1\) into reduced residue classes modulo \(d\), we rewrite the inner sum of \(\Theta_a\) into
\[
\begin{aligned}
 \sum_{\frac{x}{2}<dp_1p_2p_3\leq x}(\log p_1)W_d(p_1p_2p_3)
 &=\sideset{}{^*}\sum_{r\bmod d}W_d(rp_2p_3)
   \sum_{\substack{x/2<dp_1p_2p_3\leq x\\
                   p_1\equiv r\,({\rm mod}\,d)}}\log p_1\\
 &=\frac{x}{2dp_2p_3}\,z^{\omega(d)}\lambda_{a,\nu}(d)
   +E(d;p_2,p_3),
\end{aligned}
\]
by \eqref{eq:lower-small-mean} and \((p_2p_3,d)=1\), where 
\[
 E(d;p_2,p_3)=
 \sideset{}{^*}\sum_{r\bmod d}W_d(rp_2p_3)
 \left(
 \sum_{\substack{x/2<dp_1p_2p_3\leq x\\
                 p_1\equiv r \,({\rm mod}\, d)}}\log p_1
 -\frac{x}{2dp_2p_3\varphi(d)}
 \right).
\]
Using \eqref{eq:lower-weight-bound}, we get
\[
 E(d;p_2,p_3)\ll (\log x)^{B_0}\varphi(d) \max_{(r,d)=1} \max_{y\leq x/(dp_2p_3)}\bigg| \sum_{\substack{p\leq y\\
 		p \equiv r \,({\rm mod}\, d)}}\log p
 -\frac{y}{\varphi(d)}\bigg|.
\]

To sum these errors, we partition \(d\leq x^{1/30}\) into dyadic blocks
\(D\leq d<2D\). By \eqref{eq:lower-large-ranges} and \(1\leq D\leq x^{1/30}\),
we have \(16x^{1/3}<x/(Dp_2p_3) \leq x^{9/20}\).
The Bombieri--Vinogradov theorem \cite[Chapter~17]{IK-2004} therefore gives
\[
\begin{aligned}
	 \sum_{d\in\mathcal D}|E(d;p_2,p_3)|& \ll (\log x)^{B_0+1} \max_{D\leq x^{\frac{1}{30}}} D  \sum_{\substack{d\in\mathcal D\\D\leq d<2D}} \max_{(r,d)=1} \max_{y\leq x/(Dp_2p_3)}\bigg| \sum_{\substack{p\leq y\\
	 		p \equiv r \,({\rm mod}\, d)}}\log p
	 -\frac{y}{\varphi(d)}\bigg|\\
& \ll \frac{x}{p_2p_3(\log x)^{2}}.
\end{aligned}
\]

Recall that  \(\Theta_a\) is defined as in \eqref{eq:lower-total-weight} with $W_d(p_1p_2p_3)\log p_1$ in place of $W_d(p_1p_2p_3)$. It follows from Mertens' estimate that the contribution from $E(d;p_2,p_3)$ to  $ \Theta_a$ is $O(x/(\log x)^{2})$.
We therefore conclude that
\[
 \Theta_a=
 \frac{x}{2}\left(\sum_{p_2}\frac{1}{p_2}\right)
          \left(\sum_{p_3}\frac{1}{p_3}\right)\mathcal H_a
 +O_{z,\nu}\left(\frac{x}{(\log x)^2}\right).
\]
Since \(\mathcal H_a\geq1\), the $O_{z,\nu}$-term  is
\(o(x\mathcal H_a)\). Thus \(\Theta_a\gg_{z,\nu}x\mathcal H_a\),
as required.
\end{proof}

We shall also need the bound
\begin{equation}\label{eq:lower-bad-prime-product}
 \prod_{p\mid2a}\left(1+\frac{z\sn(\nu)}{p}\right)^{-1}
 \gg_{A,z,\nu}(\log\log(3x))^{-z\sn(\nu)}.
\end{equation}
Indeed, for \(m\geq2\),
\[
 \prod_{p\mid m}\left(1+\frac{z\sn(\nu)}{p}\right)
 \leq\left(\frac{m}{\varphi(m)}\right)^{z\sn(\nu)}
 \ll_{z,\nu}(\log\log(3m))^{z\sn(\nu)}
\]
by using the standard bound for \(m/\varphi(m)\).
Taking \(m=2|a|\leq2x^A\) gives
\eqref{eq:lower-bad-prime-product}.

\subsection{The angles at the large prime factors}

For an odd prime \(p\) and \(t\in\F_p^\times\), write
\(\Kl(t;p)=2\cos\theta_{p,t}\), with \(\theta_{p,t}\in[0,\pi]\),
as in the proof of Lemma~\ref{lem:lambda}.
For an interval \(I\subset[0,\pi]\), let
\[
 \mu_{\rm ST}(I)=\frac2\pi\int_I\sin^2\theta\,d\theta.
\]
The following lemma adapts \cite[Proposition~4.1]{FM-2003} to
the parameter \(a\) and the interval \(Y/2<pm\leq Y\).

\begin{lemma}\label{lem:lower-large-sieve}
Let \(P>1\), \(N\geq1\), \(Y\geq4\), and let \(a\ne0\) be an
integer. Let \(\mathcal P\subset[P,2P)\) be a set of primes
not dividing \(2a\), and let \(f:\N\to\R_{\geq0}\) be supported
on \([1,N]\). Suppose that \((m,p)=1\) whenever \(f(m)\ne0\)
and \(p\in\mathcal P\). Then, for any interval \(I\subset[0,\pi]\),
\[
\begin{aligned}
 \sum_{p\in\mathcal P}
  \sum_{\substack{Y/2<pm\leq Y\\
                   \theta_{p,a\bar m^2}\in I}}f(m)
= &\left(\mu_{\rm ST}(I)+O(P^{-\frac{1}{8}})\right)
       \sum_{p\in\mathcal P}\sum_{Y/2<pm\leq Y}f(m)\\
 &+O\left((\log Y)(P\#\mathcal P)^{\frac{1}{2}}
       \left(\frac NP+P\right)^{\frac{1}{2}}\Big(\sum_m|f(m)|^2\Big) ^{\frac{1}{2}}\right),
\end{aligned}
\]
The implied constants are absolute.
\end{lemma}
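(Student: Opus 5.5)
The plan is to follow Fouvry--Michel's Proposition~4.1: reduce the count over the angle interval to averages of the Chebyshev functions $\operatorname{sym}_r$, and control those averages by a large-sieve type (bilinear) estimate in $p$ and $m$.

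\emph{Step 1: Fourier approximation.} Approximate the indicator of $I$ from above and below by trigonometric polynomials in the basis $\operatorname{sym}_r(\theta)$, $0\le r\le R$. This is the Beurling--Selberg/Vaaler-type majorant construction already used in \cite[Lemmas~2.2--2.3]{FM-2003}. The constant term is $\mu_{\rm ST}(I)+O(1/R)$ and the coefficients satisfy $|c_r|\ll 1$, or $\ll 1/r$ after smoothing. Summing these approximants against $f(m)$ over $Y/2<pm\le Y$ gives the main term $(\mu_{\rm ST}(I)+O(1/R))\sum_p\sum_m f(m)$, plus error terms
\[
\sum_{1\le r\le R} c_r \sum_{p\in\mathcal P}\sum_{Y/2<pm\le Y} f(m)\,\operatorname{sym}_r(\theta_{p,a\bar m^2}).
\]

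\emph{Step 2: bilinear estimate for a fixed $r$.} The function $m\mapsto \operatorname{sym}_r(\theta_{p,a\bar m^2})$ is $p$-periodic; call it $K_{r,p}(m)$.
\begin{itemize}[label=\textendash]
\item Remove the coupling condition $Y/2<pm\le Y$ by a Perron/Mellin integral, or by splitting into $O(\log Y)$ short ranges. This is the source of the $\log Y$ factor.
\item Expand $K_{r,p}$ in additive characters modulo $p$. The zero frequency equals $p^{-1}\sum_{h}\operatorname{sym}_r(\theta_{p,a\bar h^2})$, which by \eqref{eq:a-symmetric-power} is $O(rp^{-1/2})$ uniformly in $a$ (here $p\nmid 2a$ is used). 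It therefore contributes $O(rP^{-1/2})\sum\sum f$, which is absorbed into the $O(P^{-1/8})$ term.
\item The nonzero frequencies have Fourier coefficients of size $\ll r p^{-1/2}$. This requires the square-root cancellation of the complete sums $\sum_h \operatorname{sym}_r(\theta_{p,a\bar h^2})\,\e(hb/p)$, which is Katz's theorem on twisted symmetric powers of the Kloosterman sheaf pulled back by $h\mapsto a\bar h^2$. This input is exactly what Fouvry and Michel used.
\item The resulting sum is $\sum_p\sum_{b\ne0}\hat K_{r,p}(b)\sum_m f(m)\e(bm/p)$. Apply Cauchy--Schwarz together with the large sieve inequality for the Farey fractions $b/p$, $p\in[P,2P)$; these are $P^{-2}$-spaced up to multiplicity. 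This bounds the sum by
\[
\ll r\,(\log Y)\,(P\#\mathcal P)^{1/2}\Big(\frac NP+P\Big)^{1/2}\Big(\sum_m|f(m)|^2\Big)^{1/2}.
\]
In this bound, $\sum_b|\hat K_{r,p}(b)|^2\ll r^2$ by Parseval, and the large sieve factor $N+Q^2$ with $Q\asymp P$ gives $(N/P+P)$ after normalizing by $P$.
\end{itemize}

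\emph{Step 3: choose $R$.} Take $R\asymp P^{1/8}$ and sum over $r\le R$ with the coefficient decay. The main-term error becomes $O(P^{-1/8})$. The polynomial factor in $R$ on the bilinear term has to be absorbed, either by noting that the smoothed coefficients give $\sum c_r r\ll R$ and accepting a slightly weaker exponent, or by repeating Fouvry--Michel's bookkeeping, which balances exactly to $P^{-1/8}$.

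\emph{Main obstacle.} The hardest step is the second part of Step 2: showing that the Fourier coefficients of $K_{r,p}$ are $O(rp^{-1/2})$ uniformly in $a$, i.e.\ that the twisted symmetric-power sums have square-root cancellation. I would reduce the general $a$ to the cases $a=1$ and $a$ a nonresidue exactly as in the proof of Lemma~\ref{lem:lambda}, since the substitution $h\mapsto ch$ only rescales the frequency $b$. I would then invoke \cite[Proposition~4.1]{FM-2003}, or the sheaf-theoretic bound behind their Lemma~2.1, for the remaining cases.
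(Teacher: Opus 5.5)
There is a genuine gap. It lies in Steps~1 and~3, not in the step you call the main obstacle.

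Write $\Sigma_0=\sum_{p\in\mathcal P}\sum_{Y/2<pm\le Y}f(m)$ and $\mathcal B=(\log Y)(P\#\mathcal P)^{1/2}(N/P+P)^{1/2}(\sum_m|f(m)|^2)^{1/2}$. You run the large sieve separately for each harmonic. The Parseval bound $\sum_b|\hat K_{r,p}(b)|^2\le(r+1)^2$ then makes the bilinear error for $\operatorname{sym}_r$ of size $r\mathcal B$. Summing over $r\le R$ multiplies this by $\sum_{r\le R}|c_r|\,r$, which is of order $R$ for an interval even with $|c_r|\ll1/r$. The main term forces $R\gg P^{1/8}$, so what you get is $O(P^{-1/8})\Sigma_0+O(P^{1/8}\mathcal B)$, not the stated bound.

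No choice of $R$, even one depending on $f$, repairs this. The quantity $R^{-1}\Sigma_0+R\mathcal B$ is at best $(\Sigma_0\mathcal B)^{1/2}$, and this is not $\ll P^{-1/8}\Sigma_0+\mathcal B$ in general. Appealing to Fouvry--Michel's bookkeeping does not change that. The loss also matters downstream. In Lemma~\ref{lem:lower-large-prime-ST}, $P$ reaches $x^{9/20}$, so the error bound $x^{59/60}(\log x)^{B_0+2}$ used there would grow to $x^{59/60+9/160}(\log x)^{B_0+2}$, which is larger than $x$.

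The missing idea is that the harmonic expansion should never enter the bilinear sum. For each $p$, group $m$ by its reduced class $r\bmod p$. The weight $\mathbf 1_I(\theta_{p,a\bar r^2})$ depends only on $r$ and is bounded by $1$. Replace each class sum by the average $\frac1{p-1}\sum_{Y/2<pm\le Y}f(m)$ plus a deviation.
\begin{itemize}
\item The average part produces the factor $\frac1{p-1}\#\{r:\theta_{p,a\bar r^2}\in I\}=\mu_{\rm ST}(I)+O(p^{-1/8})$ by \eqref{eq:a-ST-discrepancy}. All the harmonic analysis is thus confined to this complete, unweighted count.
\item The deviation part is handled by Cauchy--Schwarz over $(p,r)$ and the variance form of the large sieve \cite[Lemma~2.5]{FM-2003}. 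This gives exactly $\mathcal B$, with no dependence on any $R$.
\end{itemize}
This is the paper's proof. In your additive-character language, it means applying Parseval and the large sieve to the $p$-periodic function $m\mapsto\mathbf 1_I(\theta_{p,a\bar m^2})$ itself. Alternatively, apply them to the whole polynomial $\sum_r c_r\operatorname{sym}_r$ at once, using only a uniform bound on it. What fails is applying them to each $\operatorname{sym}_r$ separately.

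Finally, the input you single out as hardest is never used, even in your own computation. That input is square-root cancellation in $\sum_h\operatorname{sym}_r(\theta_{p,a\bar h^2})\e(bh/p)$. Your Cauchy--Schwarz step needs only the Parseval bound, and a pointwise bound $\hat K_{r,p}(b)\ll rp^{-1/2}$ would give nothing better there. Only the untwisted complete sums \eqref{eq:a-symmetric-power} are needed, through \eqref{eq:a-ST-discrepancy}.
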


\begin{proof}
For each \(p\in\mathcal P\), we split the sum over \(m\) into
nonzero residue classes modulo \(p\) and get
\begin{equation}\label{eq:lower-residue-decomposition}
\begin{aligned}
 \sum_{\substack{Y/2<pm\leq Y\\
                  \theta_{p,a\bar m^2}\in I}}f(m)
 &=\sum_{\substack{1\leq r<p\\\theta_{p,a\bar r^2}\in I}}
       \sum_{\substack{Y/2<pm\leq Y\\
                       m\equiv r\,({\rm mod}\,p)}}f(m)\\
 &=\frac1{p-1}
       \sum_{\substack{1\leq r<p\\\theta_{p,a\bar r^2}\in I}}
       \sum_{Y/2<pm\leq Y}f(m)\\
 &\quad+
       \sum_{\substack{1\leq r<p\\\theta_{p,a\bar r^2}\in I}}
       \bigg(
       \sum_{\substack{Y/2<pm\leq Y\\
                       m\equiv r\,({\rm mod}\,p)}}f(m)
       -\frac1{p-1}\sum_{Y/2<pm\leq Y}f(m)
       \bigg)\\
&= \Sigma_{1,p}+ \Sigma_{2,p}, \mbox{ say}.
\end{aligned}
\end{equation}
By \eqref{eq:a-ST-discrepancy} and the nonnegativity of \(f\),
the sum of $\Sigma_{1,p}$ over \(p\in\mathcal P\) equals
\[
\sum_{p\in\mathcal P}
  \left(\mu_{\rm ST}(I)+O(p^{-\frac{1}{8}})\right)
  \sum_{Y/2<pm\leq Y}f(m)=\left(\mu_{\rm ST}(I)+O(P^{-\frac{1}{8}})\right)
  \sum_{p\in\mathcal P}\sum_{Y/2<pm\leq Y}f(m),
\]
uniformly in \(a\) and \(I\).

The sum  of $\Sigma_{2,p}$ over \(p\in\mathcal P\) is, by the Cauchy--Schwarz inequality, 
\[
\begin{aligned}
 &\ll(P\#\mathcal P)^{\frac{1}{2}}
\bigg(\sum_{p\in\mathcal P}\sideset{}{^*}\sum_{r\bmod p}
\bigg|
\sum_{\substack{Y/2<pm\leq Y\\
		m\equiv r\,({\rm mod}\,p)}}f(m)
-\frac1{p-1}\sum_{Y/2<pm\leq Y}f(m)
\bigg|^2\bigg)^{\frac{1}{2}}\\
& \ll(\log Y)(P\#\mathcal P)^{\frac{1}{2}}
\left(\frac NP+P\right)^{\frac{1}{2}}
\left(\sum_m|f(m)|^2\right)^{\frac{1}{2}},
\end{aligned}
\]
where we used \cite[Lemma~2.5]{FM-2003} in the last step.
Combining the two estimates proves the lemma.
\end{proof}

Let \(\mathcal T_a^\ast\) denote the sum in
\eqref{eq:lower-total-weight} restricted by
\((p_1p_2p_3,2a)=1\). Since \(2|a|\leq2x^A\), only
\(O_A(1)\) primes \(p\geq x^{1/4}\) divide \(2a\). Thus
\eqref{eq:lower-weight-bound} gives
\begin{equation}\label{eq:lower-bad-large-primes}
0\leq\mathcal T_a-\mathcal T_a^\ast
\ll (\log x)^{B_0}
 \sum_{\substack{p\mid2a\\p\geq x^{\frac{1}{4}}}}\frac{x}{p} 
 \ll_A x^{\frac{3}{4}}(\log x)^{B_0}.
\end{equation}
By \eqref{eq:lower-bad-large-primes}, Lemma~\ref{lem:lower-prime-average}, and
\eqref{eq:lower-bad-prime-product}, we have, uniformly in \(a\), 
\begin{equation}\label{eq:T}
\mathcal T_a^\ast=(1+o(1))\mathcal T_a \ge x^{1-o(1)}
\end{equation}

For the tuples counted by \(\mathcal T_a^\ast\), define
\(\theta_j\in[0,\pi]\) by
\[
 \Kl\left(a\overline{d\prod_{i\ne j}p_i}^{\,2};p_j\right)
 =2\cos\theta_j\qquad(1\leq j\leq3).
\]
For an interval \(I\subset[0,\pi]\), define
\[
 \mathcal T_{a,j}(I)=
 \sum_{d\in\mathcal D}\sum_{p_2}\sum_{p_3}
 \sum_{\substack{
       \frac{x}{2dp_2p_3}<p_1\leq\frac{x}{dp_2p_3}\\
       (p_1p_2p_3,2a)=1\\
       \theta_j\in I}}
 W_d(p_1p_2p_3),
\]
where \(p_2,p_3\) satisfy \eqref{eq:lower-large-ranges}.

\begin{lemma}\label{lem:lower-large-prime-ST}
As \(x\to\infty\), we have
\[
 \mathcal T_{a,j}(I)
 =\mu_{\rm ST}(I)\mathcal T_a^\ast+o(\mathcal T_a^\ast),
\]
uniformly for \(0<|a|\leq x^A\), \(1\leq j\leq3\), and
intervals \(I\subset[0,\pi]\).
\end{lemma}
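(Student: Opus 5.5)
The plan is to apply Lemma~\ref{lem:lower-large-sieve} with the distinguished prime \(p=p_j\) and to treat everything else in \(n=dp_1p_2p_3\) as the variable \(m\). Fix \(j\). For \(j=1\) we take \(p=p_1\); but \(p_1\) ranges up to \(x^{9/20}\) and \(m=dp_2p_3\) is only about \(x^{11/20}\) at most. The error term in Lemma~\ref{lem:lower-large-sieve} is roughly \((\log x)P(N/P+P)^{1/2}\|f\|_2\), and this favours a \(p\) of size between \(N^{1/3}\) and \(N^{1/2}\) or so. So I expect that \(j=2\) or \(j=3\) (with \(p\approx x^{1/4}\) to \(x^{1/3}\)) is the natural setting, and that \(j=1\) needs its own check.

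The weight is handled as follows. Dyadically split the range of the chosen prime \(p_j\) into blocks \([P,2P)\), and let \(\mathcal P\) be the primes in that block not dividing \(2a\). Define
\[
f(m)=\sum_{\substack{m=d\prod_{i\ne j}p_i}} W_d\Bigl(\prod_i p_i\Bigr),
\]
with \(d\in\mathcal D\) and the other primes in their ranges and coprime to \(2a\). By twisted multiplicativity, \(\theta_j\) is exactly \(\theta_{p_j,a\bar m^2}\) with \(m=d\prod_{i\ne j}p_i\).

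The weight \(W_d(p_1p_2p_3)\) depends on \(p_j\) through \(r\bmod d\), so \(f\) as written is not independent of \(p\). I would remove this dependence by expanding in reduced classes: write \(W_d(p_1p_2p_3)=W_d(\bar u\cdot\text{rest})\) with \(p_j\equiv u\pmod d\). Equivalently, I would just include \(d\) and the class of \(p_j\) mod \(d\) as extra data. The cleanest route is to absorb the \(d\)-dependence by applying the lemma to each fixed \(d\) and each residue class \(u\bmod d\), restricting \(\mathcal P\) to primes \(\equiv u \pmod d\). The lemma allows an arbitrary set \(\mathcal P\), so this is legitimate, but it multiplies the error by a factor of up to \(\varphi(d)\le x^{1/30}\). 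So I would instead keep \(d\) inside \(m\) and note that the lemma's large sieve input only needs \(f\ge0\) for the main term. For the error, Cauchy--Schwarz over \(u\bmod d\) costs \(d^{1/2}\), and the power savings should absorb this. Coprimality holds because \(p_j>Z\ge P^+(d)\) and the primes are distinct.

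Then sum the main terms, which reconstruct \(\mu_{\rm ST}(I)\mathcal T_a^\ast\) with a relative error of \(O(x^{-1/32})\). The \(L^2\) error uses \(\sum_m f(m)^2\le(\log x)^{2B_0}\sum_m\tau_3(m)^2\), which is of size \(x^{3/4}(\log x)^{O(1)}\) for \(m\le N\approx x/P\). With \(P\approx x^{1/4}\) we get
\[
(\log x)^{O(1)}\,P\,(N/P+P)^{1/2}N^{1/2}\approx x^{1/4}\cdot x^{1/4}\cdot x^{3/8}=x^{7/8},
\]
which beats \(\mathcal T_a^\ast\ge x^{1-o(1)}\) from \eqref{eq:T}. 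For \(j=2\), with \(P\approx x^{3/10}\), the size is about \(x^{3/10}\cdot x^{1/5}\cdot x^{7/20}=x^{17/20}\), which is fine. For \(j=1\), \(P\) can be as large as \(x^{9/20}\), giving \(P^{3/2}N^{1/2}\approx x^{27/40+11/40}=x^{19/20}\), which is still fine; even near the top, \(P\cdot P^{1/2}\cdot N^{1/2}\) with \(P^2>N\) stays at \(x^{1-\delta}\). The boundary condition \(Y/2<pm\le Y\) with \(Y=x\) matches \(x/2<n\le x\).

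The main obstacle will be this bookkeeping of the \(p_j\)-dependence of \(W_d\) through \(r \bmod d\), together with checking that every power saving survives the loss from splitting into residue classes mod \(d\) and the \(\tau\)-type \(L^2\) norm. If the direct splitting loses too much, the fallback is to exploit the fact that \(W_d\) is multiplicative over \(p\mid d\) and expand it into \(O(\varphi(d))\) class indicators only for \(d\le x^{1/30}\). The total loss is then at most \(x^{1/60}\) after Cauchy--Schwarz, which the margin \(x^{1/20}\) absorbs.
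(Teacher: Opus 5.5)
Your proposal is correct and is essentially the paper's proof, which takes exactly the route you called cleanest: fix \(d\), a dyadic block \([P,2P)\) for \(p_j\), and a reduced class \(b\bmod d\), so that \(f(m)=W_d(bm/d)\) is independent of \(p\), and apply Lemma~\ref{lem:lower-large-sieve} with \(Y=x\), \(N=x/P\). Each block error is \(\ll x^{19/20}(\log x)^{B_0+1}/d\), because \(\#\mathcal P\ll P/d\) and \(\sum_m f(m)^2\ll x(\log x)^{2B_0}/(Pd)\), and summing over \(b\), \(P\) and \(d\) gives a total of \(x^{59/60}(\log x)^{B_0+2}=o(\mathcal T_a^\ast)\). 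So the \(x^{1/30}\) loss you were worried about is what is actually paid, and it fits inside the \(x^{1/20}\) margin. Your later figure of \(x^{1/60}\) is too optimistic. ``Keeping \(d\) inside \(m\)'' without fixing the class of \(p_j\bmod d\) is not a viable alternative, since the lemma needs \(f\) independent of \(p\).
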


\begin{proof}
Fix \(j\in\{1,2,3\}\). Throughout the proof, \(p_2,p_3\)
satisfy \eqref{eq:lower-large-ranges}, \(p_1\) satisfies
\eqref{eq:lower-p1-range}, and \(p_i\nmid2a\) for
\(1\leq i\leq3\). These three intervals are disjoint and lie
above \(Z\).

For each \(d\in\mathcal D\), we split the sum over \(p_j\) in
\(\mathcal T_{a,j}(I)\) into \(O(\log x)\) dyadic intervals \([P,2P)\) and reduced residue
classes modulo \(d\). Fix \(P\) and a reduced residue class
\(b\bmod d\), and let \(\mathcal P\) denote the corresponding
set of primes. Here \(x^{1/4}\ll P\ll x^{9/20}\) and
\(d\leq x^{1/30}\). For the application of Lemma~\ref{lem:lower-large-sieve}, define
\[
f(m)=
\begin{cases}
	W_d\!\left(\frac{bm}{d}\right),
	&\displaystyle m=d\prod_{i\ne j}p_i\leq\frac{x}{P},\\[2mm]
	0,&\text{otherwise}
\end{cases}.
\]
We use the unique representation $m=dp_1p_2p_3$. For
\(m\) in the support of \(f\) and \(p\in\mathcal P\), we have
\((m/d,d)=(m,p)=1\). Since \(p\equiv b\pmod d\), the
periodicity of \(W_d\) yields
\[
f(m)=W_d\!\left(\frac{bm}{d}\right)
=W_d\!\left(\frac{pm}{d}\right)
=W_d(p_1p_2p_3),
\qquad p_j=p.
\]
Thus, we have
\begin{equation}\label{eq:lower-angle-decomposition}
\begin{aligned}
 \mathcal T_{a,j}(I)
 &=\sum_{d\in\mathcal D}\sum_P
   \sideset{}{^*}\sum_{b\bmod d}
   \sum_{p\in\mathcal P}
   \sum_{\substack{\frac{x}{2}<pm\leq x\\
                   \theta_{p,a\bar m^2}\in I}}f(m),\\
 \mathcal T_a^\ast
 &=\sum_{d\in\mathcal D}\sum_P
   \sideset{}{^*}\sum_{b\bmod d}
   \sum_{p\in\mathcal P}
   \sum_{\frac{x}{2}<pm\leq x}f(m).
\end{aligned}
\end{equation}
Here \(\mathcal P\) and \(f\) depend on \(d,P,b\).

We obtain from \eqref{eq:lower-weight-bound} that
\begin{equation}\label{eq:second-f}
	 \sum_m|f(m)|^2
	\leq(\log x)^{2B_0}
	\sum_{\substack{m\leq\frac{x}{P}\\d\mid m}}1
	\leq\frac{x}{Pd}(\log x)^{2B_0}.
\end{equation}
With \(N=x/P\) and \(Y=x\), Lemma~\ref{lem:lower-large-sieve} yields
\begin{equation}\label{eq:lower-angle-block}
	\begin{aligned}
		\sum_{p\in\mathcal P}
		\sum_{\substack{\frac{x}{2}<pm\leq x\\
				\theta_{p,a\bar m^2}\in I}}f(m)=&\left(\mu_{\rm ST}(I)+O(P^{-\frac{1}{8}})\right)
		\sum_{p\in\mathcal P}\sum_{\frac{x}{2}<pm\leq x}f(m)
		+O\left(\frac{x^{\frac{19}{20}}(\log x)^{B_0+1}}{d}\right)\\
		=&\mu_{\rm ST}(I)\sum_{p\in\mathcal P}\sum_{\frac{x}{2}<pm\leq x}f(m)
		+O\left(\frac{x^{\frac{31}{32}}(\log x)^{B_0}}{d^2}+\frac{x^{\frac{19}{20}}(\log x)^{B_0+1}}{d}\right).
	\end{aligned}
\end{equation}
Indeed, the error in Lemma~\ref{lem:lower-large-sieve}
is bounded by
\[
(\log x)(P\#\mathcal P)^{\frac{1}{2}}
\left(\frac{x}{P^2}+P\right)^{\frac{1}{2}}
\left(\sum_m|f(m)|^2\right)^{\frac{1}{2}}
\ll\frac{x^{\frac{19}{20}}(\log x)^{B_0+1}}{d},
\]
where we used the bound \eqref{eq:second-f}, \(\#\mathcal P\ll P/d\) and
\(x^{1/4}\ll P\ll x^{9/20}\).
By using \(d\mid m\) for $m$ on the support
of \(f\) and  \eqref{eq:lower-weight-bound}, the error term coming from \(O(P^{-1/8})\) is bounded by
\[
P^{-\frac18}\sum_{p\in\mathcal P}\sum_{\frac{x}{2}<pm\leq x}f(m)
\ll P^{-\frac18}\#\mathcal P\,\frac{x}{Pd}(\log x)^{B_0}
\ll\frac{x^{\frac{31}{32}}(\log x)^{B_0}}{d^2}.
\]

We now sum \eqref{eq:lower-angle-block} over \(b\), the dyadic
intervals, and \(d\in\mathcal D\). The errors contribute
\[
 \ll\sum_{d\in\mathcal D}\sum_P
       \sideset{}{^*}\sum_{b\bmod d}\left(\frac{x^{\frac{31}{32}}(\log x)^{B_0}}{d^2}+\frac{x^{\frac{19}{20}}(\log x)^{B_0+1}}{d} \right)       
 \ll x^{\frac{59}{60}}(\log x)^{B_0+2}.
\]
By \eqref{eq:lower-angle-decomposition} and \eqref{eq:T}, we obtain
\[
 \left|\mathcal T_{a,j}(I)-\mu_{\rm ST}(I)\mathcal T_a^\ast\right|
 \ll_{A,z,\nu}x^{\frac{59}{60}}(\log x)^{B_0+2}
 =o(\mathcal T_a^\ast).
\]
\end{proof}

\subsection{Completion of the proof}
Let
\[
 I_0=\{\theta\in[0,\pi]:|\cos\theta|<1/4\}.
\]
Its Sato--Tate measure satisfies
\[
 \mu_{\rm ST}(I_0)
 =\frac2\pi\left(\arcsin\frac14+\frac{\sqrt{15}}{16}\right)
 <\frac13.
\]
We sum the estimates of Lemma~\ref{lem:lower-large-prime-ST}
over \(1\leq j\leq3\).
Among the tuples counted by \(\mathcal T_a^\ast\), those with some
\(\theta_j\in I_0\) have total weight at most
\((3\mu_{\rm ST}(I_0)+o(1))\mathcal T_a^\ast\).
Since \(3\mu_{\rm ST}(I_0)<1\), the tuples with
\((p_1p_2p_3,2a)=1\) and
\(|2\cos\theta_j|\geq1/2\) for \(1\leq j\leq3\)
have total weight \(\gg\mathcal T_a^\ast\).

For such a tuple, \(n=dp_1p_2p_3\) is squarefree,
\((n,2a)=1\), and \(x/2<n\leq x\).
Twisted multiplicativity gives
\[
 z^{\omega(n)}|\Kl(a;n)|^\nu
 =W_d(p_1p_2p_3)\prod_{j=1}^3z|2\cos\theta_j|^\nu
 \geq z^3 2^{-3\nu}W_d(p_1p_2p_3).
\]
Distinct tuples yield distinct integers \(n\). We sum this inequality
and apply \(\mathcal T_a^\ast\sim\mathcal T_a\) and
Lemma~\ref{lem:lower-prime-average}. Replacing the product over
\(p\mid2a\) by that over \(p\mid a\) costs at most the fixed factor
\(1+z\sn(\nu)/2\). Thus
\[
 \sum_{\substack{x/2<n\leq x\\n\ \mathrm{squarefree}\\(n,2a)=1}}
 z^{\omega(n)}|\Kl(a;n)|^\nu
 \gg_{A,z,\nu}
 \prod_{p\mid a}\left(1+\frac{z\sn(\nu)}p\right)^{-1}
 x(\log x)^{z\sn(\nu)-1}.
\]

\section{Proofs of the applications}\label{sec:application-proofs}

\begin{proof}[Proof of Corollary~\ref{cor:moment-order}]
For the upper bound, we apply Corollary~\ref{cor:normalized} with
\(k=1\), \(Q_1(X)=X\), \(z_1=z\) and \(\nu_1=\nu\).
For sufficiently large \(x\), we have
\[
 \sum_{x<n\leq2x}z^{\omega(n)}|\Kl(a;n)|^\nu
 \ll x(\log x)^{z\sn(\nu)-1},
\]
where the implied constant depends only on \(z,\nu\). The upper bound then follows by a dyadic
decomposition. The lower bound follows directly from Theorem~\ref{thm:lower}.
\end{proof}

\begin{proof}[Proof of Corollary~\ref{cor:Kl-large-values}]
For every \(n\in\mathcal L_{a,B}(x)\), one has
\(|\Kl(a;n)|^{\nu_B}\geq (\log x)^{B\nu_B}\). By Corollary~\ref{cor:moment-order}, we have
\begin{equation}\label{eq:upper-LaB}
	 \#\mathcal L_{a,B}(x)
	\leq (\log x)^{-B\nu_B}
	\sum_{x/2<n\leq x}|\Kl(a;n)|^{\nu_B}
	\ll x(\log x)^{-I(B)}.
\end{equation}

Fix \(\varepsilon>0\). For \(\tau>0\), write
\(\nu:=\nu_B+\tau\) and \(\beta:=\sn'(\nu+\tau)\). As
\(\tau\to0^+\), we have
\(1+\nu\beta-\sn(\nu)\longrightarrow I(B)\).
We may therefore choose \(\tau=\tau(B,\varepsilon)>0\) so small that
\[
 1+\nu\beta-\sn(\nu)<I(B)+\varepsilon.
\]
Since \(\sn'(\nu_B)=B\), the strict convexity of \(\sn\) gives
\begin{equation}\label{eq:B-beta}
 \beta>B,
 \qquad
 \frac{\sn(\nu)-\sn(\nu-\tau)}{\tau}>B,
 \qquad
 \frac{\sn(\nu+\tau)-\sn(\nu)}{\tau}<\beta.
\end{equation}
By Corollary~\ref{cor:moment-order} and \eqref{eq:B-beta}, there is a
constant \(c>0\) such that
\[
\begin{aligned}
 \sum_{\substack{x/2<n\leq x\\
 |\Kl(a;n)|\notin[(\log x)^B,(\log x)^\beta]}}
 |\Kl(a;n)|^\nu
 &\leq
 (\log x)^{B\tau}\sum_{n\leq x}|\Kl(a;n)|^{\nu-\tau}
 +(\log x)^{-\beta\tau}\sum_{n\leq x}|\Kl(a;n)|^{\nu+\tau}\\
 &\ll
 x(\log x)^{\sn(\nu-\tau)-1+B\tau}
 +x(\log x)^{\sn(\nu+\tau)-1-\beta\tau}\\
 &\ll x(\log x)^{\sn(\nu)-1-c}.
\end{aligned}
\]
On the other hand, Theorem~\ref{thm:lower}, with \(z=1\) and exponent
\(\nu\), gives
\[
 \sum_{\substack{x/2<n\leq x\\n\ \mathrm{squarefree}\\(n,2a)=1}}
 |\Kl(a;n)|^\nu
 \gg x(\log x)^{\sn(\nu)-1}.
\]
It follows that
\begin{equation}\label{eq:large-value-mass}
 x(\log x)^{\sn(\nu)-1}
 \ll\sum_{\substack{x/2<n\leq x\\
                    n\ \mathrm{squarefree},\ (n,2a)=1\\
                    (\log x)^B\leq|\Kl(a;n)|\leq(\log x)^\beta}}
 |\Kl(a;n)|^\nu
 \leq(\log x)^{\nu\beta}\#\mathcal L_{a,B}(x).
\end{equation}
Consequently,
\begin{equation}\label{eq:lower-LaB}
 \#\mathcal L_{a,B}(x)
 \gg\frac{x}{(\log x)^{1+\nu\beta-\sn(\nu)}}
 \geq\frac{x}{(\log x)^{I(B)+\varepsilon}}.
\end{equation}
Combining \eqref{eq:upper-LaB} and \eqref{eq:lower-LaB}, we obtain
\[
 (\log x)^{-I(B)-\varepsilon}
 \ll\mathbb P_x\bigl(\mathcal L_{a,B}(x)\bigr)
 \ll(\log x)^{-I(B)}.
\]
We take logarithms and divide by \(\log\log x\), then let
\(x\to\infty\) and \(\varepsilon\to0\), in this order. This proves
the corollary.
\end{proof}

\begin{proof}[Proof of Corollary~\ref{cor:Kl-large-value-concentration}]
Fix \(0<\delta<\sn(\nu_B)\). For each choice of sign, let $ z_\pm=1\pm \delta/\sn(\nu_B)$
and
\[
 E_\pm:=\left\{n\in\mathcal L_{a,B}(x):
 \pm\bigl(\omega(n)-\sn(\nu_B)\log\log x\bigr)
 \geq\delta\log\log x\right\}.
\]
Since \(z_+>1\) and \(0<z_-<1\), for either sign and every
\(n\in E_\pm\) we have
\[
 z_\pm^{\omega(n)}|\Kl(a;n)|^{\nu_B}
 \geq(\log x)^{B\nu_B+(\sn(\nu_B)\pm\delta)\log z_\pm}.
\]
The upper bound in Corollary~\ref{cor:moment-order}, together with
\(I(B)=1+B\nu_B-\sn(\nu_B)\), therefore gives
\[
 \mathbb P_x(E_\pm)
 \ll(\log x)^{-I(B)-(\sn(\nu_B)\pm\delta)\log z_\pm\mp\delta}.
 \]
The elementary inequality
\[
 (1+u)\log(1+u)-u\geq\frac{u^2}{4}
 \quad(0<|u|<1)
\]
shows that
\[
 (\sn(\nu_B)\pm\delta)\log z_\pm\mp\delta
 \geq\frac{\delta^2}{4\sn(\nu_B)}.
\]
Hence
\[
 \mathbb P_x(E_+\cup E_-)
 \ll(\log x)^{-I(B)-\frac{\delta^2}{4\sn(\nu_B)}}.
\]
On the other hand, \eqref{eq:lower-LaB} with
\(\varepsilon=\delta^2/(8\sn(\nu_B))\) gives
\[
 \mathbb P_x\bigl(\mathcal L_{a,B}(x)\bigr)
 \gg(\log x)^{-I(B)-\frac{\delta^2}{8\sn(\nu_B)}}.
\]
Dividing these estimates, we obtain
\[
 \mathbb P_x\!\left(
 \left.\left|\frac{\omega(n)}{\log\log x}-\sn(\nu_B)\right|
 \geq\delta\ \right|\ \mathcal L_{a,B}(x)\right)
 \ll_{a,B,\delta}
 (\log x)^{-\frac{\delta^2}{8\sn(\nu_B)}}.
\]
\end{proof}

\begin{proof}[Proof of Corollary~\ref{cor:Kl-omega}]
For \((n,2ab)=1\), a change of variable in \eqref{eq:S-def} shows that
\[
 S(a,b;n)=\sqrt n\,\Kl(ab;n).
\]
The corollary follows from \eqref{eq:large-value-mass}.
\end{proof}

\begin{proof}[Proof of Corollary~\ref{cor:Hecke-Kl-mixed}]
Put $F(n)=|\lambda_f(n)|^\mu$. The Hecke relations and Deligne's bound \cite{Deligne-1974} show that
\(F\) belongs to \(\mathcal M_1(A,B,\varepsilon_0)\), for suitable \(A,B\) and every
fixed \(\varepsilon_0>0\). Moreover, \(G_F=F\), and hence $ G_F(p)=|\lambda_f(p)|^\mu$.

We apply Corollary~\ref{cor:Henriot-cor2} with \(Q(X)=X\). Here
\(D=1\) and \(\rho_Q(p)=1\). Mertens' theorem and
\eqref{eq:lambda-prime-upper} yield
\begin{equation}\label{eq:cor33-45}
	\sum_{x<n\leq x+y}
	|\lambda_f(n)|^\mu|\Kl(a;n)|^\nu\ll
	\frac{y}{\log x}
	\prod_{p\leq x}
	\left(
	1+\frac{\sn(\nu)|\lambda_f(p)|^\mu}{p}
	+O_{\mu,\nu}\left(p^{-\frac98}\right)
	\right).
\end{equation}
It follows from \eqref{eq:Hecke-ST-prime} that
\[
 \log\prod_{p\leq x}
 \left(
 1+\frac{\sn(\nu)|\lambda_f(p)|^\mu}{p}
 +O_{\mu,\nu}\left(p^{-\frac98}\right)
 \right)
 \leq
 \bigl(\sn(\mu)\sn(\nu)+o(1)\bigr)\log\log x.
\]
On inserting this into \eqref{eq:cor33-45}, Corollary~\ref{cor:Hecke-Kl-mixed} follows.
\end{proof}

\begin{proof}[Proof of Corollary~\ref{cor:Hecke-Kl-noncoincidence}]
	Fix \(\varepsilon>0\). Since \(\sn(0)=1\) and \(\sn\) is
	continuous at \(0\), we may fix \(\nu>0\) such that $1+\sn(2\nu)-2\sn(\nu)<\varepsilon$.
	Denote
	\(
	\mathcal S(x)
	=\{n:x/2<n\leq x,\  \mu^2(n)=1,\ (n,2a)=1\}\) and 
	\[
	\mathcal D(x)
	=\{n\in\mathcal S(x):\lambda_f(n)\ne\pm\Kl(a;n)\}.
	\]
	For $n\in\mathcal S(x)\setminus\mathcal D(x)$, we have $|\lambda_f(n)|=|\Kl(a;n)|$. Thus, Corollary~\ref{cor:Hecke-Kl-mixed} yields
	\begin{equation}\label{eq:fourier-kl-nu}
		\sum_{n\in\mathcal S(x)\setminus\mathcal D(x)}
		|\Kl(a;n)|^\nu=\sum_{n\in\mathcal S(x)\setminus\mathcal D(x)}
		|\lambda_f(n)|^{\frac{\nu}{2}}|\Kl(a;n)|^{\frac{\nu}{2}}
		\ll x(\log x)^{\sn(\nu/2)^2-1+\varepsilon'},
	\end{equation}
	where the constant $\varepsilon'>0$ is sufficiently small. On the other hand, Theorem~\ref{thm:lower} yields
	\begin{equation}\label{eq:fourier-kl-nu-2}
	\sum_{n\in\mathcal S(x)}|\Kl(a;n)|^\nu
	\gg x(\log x)^{\sn(\nu)-1}.
	\end{equation}
	By the Cauchy--Schwarz inequality, we get from \eqref{eq:sato-moment} that
	\(\sn(\nu/2)^2<\sn(\nu)\). Subtracting \eqref{eq:fourier-kl-nu} into \eqref{eq:fourier-kl-nu-2}, we obtain
   \begin{equation}\label{eq:Dset-lower}
	\sum_{n\in\mathcal D(x)}|\Kl(a;n)|^\nu
	\gg x(\log x)^{\sn(\nu)-1}.
	\end{equation}
	We use the Cauchy--Schwarz inequality again and the upper bound in
	Corollary~\ref{cor:moment-order}, and get
	\[
	\sum_{n\in\mathcal D(x)}|\Kl(a;n)|^\nu
	\leq \big(\#\mathcal D(x)\big)^{\frac{1}{2}}
	\Big(\sum_{n\leq x}|\Kl(a;n)|^{2\nu}\Big)^{\frac{1}{2}}
	\ll \big(\#\mathcal D(x)\big)^{\frac{1}{2}}
	\big(x(\log x)^{\sn(2\nu)-1}\big)^{\frac{1}{2}}.
	\]
Combining this with \eqref{eq:Dset-lower}, we derive
	\[
	\#\mathcal D(x)
	\gg
	\frac{x}{(\log x)^{1+\sn(2\nu)-2\sn(\nu)}}
	\gg
	\frac{x}{(\log x)^\varepsilon}.
	\]
	This completes the proof of Corollary~\ref{cor:Hecke-Kl-noncoincidence}.
\end{proof}

\begin{proof}[Proof of Theorem~\ref{thm:LS}]
Selberg's identity \cite[(4.10)]{Iwaniec-1997} yields
\begin{equation}\label{eq:Selberg-Id}
	S(u,u;n)=\sum_{d\mid (|u|,n)} \sqrt{nd}\Kl\bigg(\frac{u^2}{d^2};\frac{n}{d}\bigg).
\end{equation}
By the Cauchy--Schwarz inequality, we thus get
\begin{equation}\label{eq:LS-Cauchy}
 \sum_{n\leq x}\frac{|S(u,u;n)|^2}{n^2}
 \leq
 \tau(|u|)
 \sum_{\substack{d\mid |u|\\d\leq x}}
 \sum_{q\leq x/d}
 \frac{\left|\Kl\!\left(\frac{u^2}{d^2};q\right)\right|^2}{q}.
\end{equation}
The upper bound in Corollary~\ref{cor:moment-order}, with
\(z=1\) and \(\nu=2\), asserts that
\[
\sum_{n\leq x}|\Kl(a;n)|^2\ll x
\]
uniformly for \(a\ne0\). This estimate, together with partial
summation in \eqref{eq:LS-Cauchy}, proves the required upper bound.

For the lower bound, suppose that \(0<|u|\leq x^A\). Theorem~\ref{thm:lower} with \(z=1\) and \(\nu=2\) yields
\begin{equation}\label{eq:Kl-sec-lower}
	 \sum_{\substack{n\leq y\\(n,u)=1}}|\Kl(u^2;n)|^2
	\gg y\prod_{p\mid u}\left(1+\frac{1}{p}\right)^{-1}
\end{equation}
for any $\sqrt{x}\leq y\leq x$, where the implied constant depends only on \(A\).
For \((n,u)=1\), Selberg's identity \eqref{eq:Selberg-Id} reduces to
\(S(u,u;n)=\sqrt n\,\Kl(u^2;n)\). Thus, by partial summation, it follows from \eqref{eq:Kl-sec-lower} that
\[
\begin{aligned}
 \sum_{n\leq x}\frac{|S(u,u;n)|^2}{n^2}
 &\geq\sum_{\substack{n\leq x\\(n,u)=1}}
 \frac{|\Kl(u^2;n)|^2}{n}\\
 &\geq\int_{\sqrt x}^{x}\frac{1}{y^2}
 \sum_{\substack{n\leq y\\(n,u)=1}}|\Kl(u^2;n)|^2\,dy\\
 &\gg(\log x)\prod_{p\mid u}\left(1+\frac{1}{p}\right)^{-1}.
\end{aligned}
\]
\end{proof}

\begin{proof}[Proof of Corollary~\ref{cor:spectral-variance}]
The number variance of Li and Sarnak
\cite[Theorem~1.4]{LiSarnak-2004} admits the asymptotic formula
\begin{equation*}
	\Sigma_h(T,L)=R(T,L)+O\!\left(\frac{T}{L^2}\right)
\end{equation*}
uniformly in the range \eqref{eq:spectral-window}, where 
\[
R(T,L)=\frac{T}{\pi^6L^2}
\left(\int_0^\infty\xi^5\psi(\xi)\,d\xi\right)
\sum_{\substack{u,n\geq1\\(u,n)=1}}
\prod_{p\mid n}(1-p^{-2})^{-2}
\frac{|S(u,u;n)|^2}{u^2n^2}
\left|\widehat h\!\left(\frac{\log(Tn/u)}{\pi L}\right)\right|^2.
\]

Since \(\operatorname{supp}\widehat h\subset(-1,1)\), a nonzero
term in \(R(T,L)\) must satisfy $n<u e^{\pi L}/T \leq uT^{\frac{1}{121}}$.
Moreover, $\widehat h(\xi)\ll 1$ and $1\leq\prod_{p\mid n}(1-p^{-2})^{-2}\leq\zeta(2)^2$. Note that  \(L\asymp\log T\). Thus, the upper bound in Theorem~\ref{thm:LS} yields
\begin{equation}\label{eq:R-upperbound}
	\begin{aligned}
		R(T,L)
		&\ll\frac{T}{L^2}
		\sum_{u\geq1}\frac{1}{u^2}
		\sum_{n\leq uT^{\frac{1}{121}}}
		\frac{|S(u,u;n)|^2}{n^2}\\
		&\ll\frac{T}{L^2}
		\sum_{u\geq1}\frac{\tau(u)^2\log(uT^{\frac{1}{121}})}{u^2}
		\ll\frac{T}{L}.
	\end{aligned}
\end{equation}

For the lower bound, it is clear that
\[
R(T,L)\gg \frac{T}{L^2}
\sum_{n\geq1}
\frac{|S(1,1;n)|^2}{n^2}
\left|\widehat h\!\left(\frac{\log(Tn)}{\pi L}\right)\right|^2.
\]
By the choice of $h$, there exist $b_1,b_2$ such that \((1+\delta)^{-1}<b_1<b_2<1\) and \(|\widehat h|\gg 1\) on \([b_1,b_2]\). By partial summation and the estimate \eqref{eq:cor41-fixeda-2}, we get
\[
\begin{aligned}
	R(T,L)
	&\gg \frac{T}{L^2}
	\sum_{e^{\pi Lb_1}/T<n\leq e^{\pi Lb_2}/T}
	\frac{|S(1,1;n)|^2}{n^2} \\
	&\gg \frac{T}{L^2}
	\sum_{e^{\pi Lb_1}/T<n\leq e^{\pi Lb_2}/T}
	\frac{|\Kl(1;n)|^2}{n} \\
	&\gg \frac{T}{L}.
\end{aligned}
\]
Combining this with \eqref{eq:R-upperbound}, we conclude
\eqref{eq:spectral-variance-order}.

To prove \eqref{eq:spectral-mean-square}, we apply \eqref{eq:spectral-variance-order} with a fixed nonnegative smooth function \(\psi\) supported in \([4/3,5/3]\), with \(\int_0^\infty\psi(x)\,dx=1\), and take
\(L=(1+\delta)\log T/\pi\). By the definition of \(N(t)\) and the evenness of \(h\), we have
\[
 N_h(t,L)
 =\int_0^\infty h(L(t-\xi))\,dN(\xi)
 =\int_0^\infty h(L(\xi-t))\,dN(\xi).
\]
By \eqref{eq:spectral-Weyl} and integration by parts, we obtain
\[
 N_h(t,L)-\frac{t}{6L}
 =-\int_0^\infty Lh'(L(\xi-t))S(\xi)\,d\xi+O(\log (1+|t|)),
\]
where we used the fact \(\int_{\R}h(x)dx=1\).  See also \cite[(6.11)]{LiSarnak-2004}. For any \(t\in[4T/3,5T/3]\), it follows from the Cauchy--Schwarz inequality that
\[
 \left|N_h(t,L)-\frac{t}{6L}\right|^2
 \ll(\log T)^2+
 \int_0^\infty |Lh'(L(\xi-t))|S(\xi)^2\,d\xi.
\]
By the choice of \(\psi\), we get
\[
 T\Sigma_h(T,L)
 \ll T(\log T)^2+\int_0^\infty S(\xi)^2
 \bigg(\int_{\frac{4T}{3}}^{\frac{5T}{3}}
 |Lh'(L(\xi-t))|\,dt\bigg)d\xi.
\]
Since \(h'\) is integrable, the inner integral is at most
\(\|h'\|_1\), uniformly in \(\xi\).
For \(\xi \notin [T,2T]\), we have $ |\xi-t|\gg T+\xi$. Thus, the rapid decay of \(h'\) yields 
\[
 \int_{\frac{4T}{3}}^{\frac{5T}{3}}|Lh'(L(\xi-t))|\,dt
 \ll\frac{T}{(T+\xi)^5}
 \leq\frac{1}{(T+\xi)^4}.
\]
By \cite[(1.4)]{LiSarnak-2004}, \(S(\xi)\ll 1+\xi\). Hence the contribution
from \(\xi\notin[T,2T]\) is
\[
 \ll\int_0^\infty\frac{(1+\xi)^2}{(T+\xi)^4}\,d\xi
 \ll\frac{1}{T}.
\]
It follows that
\begin{equation}\label{eq:spectral-transfer}
 T\Sigma_h(T,L)
 \ll T(\log T)^2+\int_T^{2T}S(\xi)^2\,d\xi.
\end{equation}
The lower bound in \eqref{eq:spectral-variance-order}
shows that \(T\Sigma_h(T,L)\gg T^2/\log T\). The estimate \eqref{eq:spectral-mean-square} now follows from \eqref{eq:spectral-transfer}. The \(\Omega\)-estimate follows from
\[
 \sup_{T\leq t\leq2T}|S(t)|^2
 \geq\frac{1}{T}\int_T^{2T}S(t)^2\,dt
 \gg\frac{T}{\log T}.
\]
\end{proof}

\begin{proof}[Proof of Corollary~\ref{cor:ASS}]
Since the shifts are distinct, the polynomials
\(Q_j(X)=X+h_j\) are irreducible and pairwise coprime. Corollary~\ref{cor:normalized}
with \(z_j=2^{-\nu_j}\) therefore yields
\[
 \sum_{x<n\leq2x}
 \prod_{j=1}^s
 \left(\frac{|\Kl(a;n+h_j)|}{2^{\omega(n+h_j)}}\right)^{\nu_j}
 \ll x(\log x)^{\sum_{j=1}^s2^{-\nu_j}\sn(\nu_j)-s},
\]
where the implied constant depends only on \(\boldsymbol\nu,\mathbf h\). The dyadic argument completes the proof.
\end{proof}

\vskip 2mm
\noindent\textbf{Acknowledgments.}
The authors thanks Ping Xi for drawing the authors' attention to the work of Fouvry and Michel during the Working Seminar on Analytic Number Theory at TSIMF in February 2024.

\vskip 2mm

\noindent\textbf{AI use.}
We used ChatGPT to assist in exploring the lower-bound argument in Theorem~\ref{thm:lower} and to improve the exposition in parts of the manuscript. We take full responsibility for the mathematical arguments, results, and final text.

\end{document}